\numberwithin{equation}{section}
\definecolor{my_color}{rgb}{0,0.5,0.5}
\definecolor{MIXT}{rgb}{0.4,0.3,0.6}
\newtheorem{thm}{Theorem}[section]
\newtheorem{lm}[thm]{Lemma}
\newtheorem{cor}[thm]{Corollary}
\newtheorem{prop}[thm]{Proposition}
\theoremstyle{remark}
\newtheorem{rmk}[thm]{Remark}
\theoremstyle{definition}
\newtheorem{ex}[thm]{Example}
\newtheorem*{ex-bn}{Example}
\newtheorem*{rema}{Remark}
\newtheorem{df}{Definition}
\newenvironment{proof*}
{\noindent {\sl Proof.}\quad }{\hfill
$\square$}
\newcommand {\ah}{{\mathfrak a}}
\newcommand {\be}{{\mathfrak b}}
\newcommand {\ce}{{\mathfrak c}}
\newcommand {\g}{{\mathfrak g}}
\newcommand {\el}{{\mathfrak l}}
\newcommand {\n}{{\mathfrak n}}
\newcommand {\p}{{\mathfrak p}}
\newcommand {\q}{{\mathfrak q}}
\newcommand {\te}{{\mathfrak t}}
\newcommand {\ut}{{\mathfrak u}}
\newcommand {\z}{{\mathfrak z}}
\newcommand {\slno}{\mathfrak{sl}_{l+1}}
\newcommand {\spv}{\mathfrak{sp}(V)}
\newcommand {\spn}{\mathfrak{sp}_{2l}}
\newcommand {\sov}{\mathfrak{so}(V)}
\newcommand {\son}{\mathfrak{so}_{n}}
\newcommand {\eus}{\EuScript}
\newcommand {\ap}{\alpha}
\newcommand {\lb}{\lambda}
\newcommand {\cF}{{\mathcal F}}
\newcommand {\cH}{{\mathcal H}}
\newcommand {\BZ}{{\mathbb Z}}
\newcommand {\BN}{{\mathbb N}}
\newcommand {\md}{/\!\!/}
\newcommand {\ads}{{\mathrm{ad}^*}}
\newcommand {\Ad}{{\mathrm{Ad}}}
\newcommand {\codim}{{\mathrm{codim\,}}}
\newcommand {\ind}{{\mathsf{ind\,}}}
\newcommand {\Lie}{{\mathrm{Lie\,}}}
\newcommand {\Ima}{{\mathrm{Im}}}
\newcommand {\rk}{{\mathrm{rk\,}}}
\newcommand {\spe}{{\mathsf{Spec\,}}}
\newcommand {\trdeg}{{\mathrm{tr.deg\,}}}
\newcommand {\tri}{\mathfrak{sl}_2}
\newcommand {\GR}[2]{{\textrm{{\bf #1}}}_{#2}}
\newcommand {\ov}{\overline}
\newcommand {\un}{\underline}
\newcommand {\beq}{\begin{equation}}
\newcommand {\eeq}{\end{equation}}
\renewcommand{\le}{\leqslant}
\renewcommand{\ge}{\geqslant}
\newcommand {\bbk}{\Bbbk}
\newcommand{\gt}{\mathfrak}
\begin{document}
\setlength{\parskip}{2pt plus 4pt minus 0pt}
\hfill {\scriptsize December 31, 2012}
\vskip1.5ex

\title[Parabolic contractions of semisimple Lie algebras]
{Parabolic contractions of semisimple Lie algebras \\and their invariants}
\author[D.\,Panyushev]{Dmitri I.~Panyushev}
\address[D.P.]{Institute for Information Transmission Problems of the Russian Academy of Sciences, 
 B. Karetnyi per. 19, Moscow 127994, Russia}
\email{panyushev@iitp.ru}
\author[O.\,Yakimova]{Oksana S.~Yakimova}
\address[O.Y.]{Mathematisches Institut, Friedrich-Schiller-Universit\"at Jena,
Deutschland}
\email{oksana.yakimova@uni-jena.de}
\subjclass[2010]{13A50, 14L30, 17B08, 17B45, 22E46}
\keywords{Algebra of invariants, coadjoint representation, contraction,
Richardson orbit}
\maketitle

\section*{Introduction}
\noindent
The ground field $\bbk$ is algebraically closed and $\mathsf{char}\,\bbk=0$.
Let $G$ be a connected semisimple algebraic group of rank $l$, with  Lie algebra $\g$. 
Motivated by some problems in Representation Theory \cite{ffp,ffp2}, E.\,Feigin introduced recently a 
very interesting contraction of $\g$~\cite{feigin1}. 
This contraction is the semi-direct product $\tilde\q=\be\ltimes (\g/\be)^a$, where 
$\be$ is a Borel subalgebra of $\g$ and the $\be$-module $\g/\be$ is regarded as 
an abelian ideal in $\tilde\q$. 
Using this contraction, Feigin also defined certain degenerations of the usual flag variety of $G$.
This leads to numerous problems of algebraic-geometric and combinatorial nature, see 
\cite{ifr12,feigin2,feigin3}. 
Our intention is to look at $\tilde\q$ from the invariant-theoretic point of view.
In \cite{alafe1}, we proved that the ring of invariants for the adjoint or coadjoint representation 
of $\tilde\q$ is always polynomial and  that the enveloping algebra, 
$\eus U(\tilde\q)$, is a free module over its centre. 
In this paper, we generalise Feigin's construction by replacing $\be$ with an arbitrary parabolic 
subalgebra $\p\subset\g$. The resulting Lie algebras are said to be {\it parabolic contractions\/} of 
$\g$. For arbitrary parabolic contractions, 
the description of the invariants of the adjoint representation is easy and remains basically the 
same as for $\p=\be$, while the case of the coadjoint 
representation requires new techniques. 

Let $P$ be a parabolic subgroup of $G$ with $\Lie P=\p$ and $\n$  the nilpotent radical of $\p$.
Fix a Levi subgroup $L\subset P$ and
a vector space decomposition
$\g=\n\oplus\el\oplus\n_-$, where $\el=\Lie L$
and $\n_-$ is the nilpotent radical of an opposite parabolic subalgebra $\p_-=\el\oplus\n_-$.
Using the vector space isomorphism $\g/\p\simeq\n_-$, we always regard $\n_-$ as a $P$-module. 
If $p\in\p$, $\eta \in \n_-$, and $\mathsf{pr}_-: \g\to \n_-$ is the 
projection with kernel $\p$, then the corresponding representation of $\p$ is given by 
$(p,\eta) \mapsto p\circ \eta:= \mathsf{pr}_-([p,\eta])$. 
A {\it parabolic contraction\/} of $\g$ is the  semi-direct product
$\q=\p\ltimes (\g/\p)^a=\p\ltimes \n_-^a$, where
the superscript `$a$' means that the $\p$-module $\n_-$ is regarded as an abelian ideal in 
$\q$. We  identify the vector spaces $\g$ and $\q$ using the decomposition
$\g=\p\oplus\n_-$.
For $(p,\eta), (p',\eta') \in \q$, the Lie bracket in $\q$ is defined by
\beq  \label{eq:skobka}
   [(p,\eta), (p',\eta')]=([p,p'], p\circ\eta'- p'\circ\eta) .
\eeq
Set $N_-^a=\exp(\n_-^a)$ and  $Q=P\ltimes N_-^a$. Then $Q$ is a connected algebraic 
group with $\Lie Q=\q$ and $N_-^a$ is an abelian normal unipotent subgroup
of $Q$. 
The exponential map $\exp: \n_-^a \to N_-^a$ is an isomorphism of varieties, and 
elements of $Q$ can be written as products $s{\cdot}\exp(\eta)$ with $s\in P$ and $\eta\in\n_-$.
If  $(s,\eta)\mapsto s{\centerdot}\eta$ is the representation of $P$ in $\n_-$, then
the adjoint representation of $Q$ is given by
\beq   \label{eq:adj-action}
   \Ad_Q(s{\cdot}\exp(\eta))(p,\eta')=(\Ad(s)p, s{\centerdot} (\eta' - p\circ\eta)) .
\eeq
In this article, we consider  polynomial invariants of the adjoint and coadjoint  representations 
of $Q$. In the adjoint case the answer is uniform, nice, and easy. We prove that
\[
   \bbk[\q]^Q\simeq \bbk[\p]^P \simeq \bbk[\el]^L 
\]
and the quotient morphism $\pi_\q:  \q\to \q\md Q$ is equidimensional.
In particular, $\bbk[\q]^Q$ is a graded polynomial algebra with $l$ generators (see Section~\ref{sect2}).
However, the degrees of basic invariants in $ \bbk[\q]^Q$ and $\bbk[\g]^G$ are not the same (unless
$L=P=G$).

In the coadjoint case the situation is more complicated and interesting. Our main observation is that
the structure of $\bbk[\q^*]^Q$ is closely related to some properties of the centraliser
$\g_e\subset\g$, where $e\in \n$ is a Richardson element associated with $\p$. 
It is known that $\g_e=\p_e$ and therefore the groups $P_e\subset G_e$ have the same identity component.
Let $\eus S(\g_e)$ be the symmetric algebra of $\g_e$ and  $\eus S(\g_e)^{G_e}$ the subalgebra
of symmetric invariants. Using an $\tri$-triple containing $e$ and the 
(homogeneous) basic invariants $\cF_1,\dots,\cF_l$
in $\eus S(\g)^G$, one can construct certain polynomials 
$^{e\!}\cF_1,\ldots,\,^{e\!}\cF_l\in\eus S(\g_e)^{G_e}$ (see \cite{ppy} and Section~\ref{sect3}).
We prove that if $^{e\!}\cF_1,\ldots,\,^{e\!}\cF_l$ are algebraically independent and generate the 
algebra $\eus S(\g_e)^{P_e}$ (hence $\eus S(\g_e)^{P_e}=\eus S(\g_e)^{G_e}$), then 
$\bbk[\q^*]^Q=\eus S(\q)^Q$ is a polynomial algebra whose free generators
$\cF_1^\bullet,\dots, \cF_l^\bullet$ are obtained from $\cF_1,\dots,\cF_l$ via a standard contraction
procedure (see Theorems~~\ref{thm:main3} and \ref{thm:coadj-inonu}). 
In this situation, 
$\cF_1^\bullet,\dots, \cF_l^\bullet$ have also
the following Kostant-like property: the differentials ${\textsl d}_\xi \cF_1,\dots,{\textsl d}_\xi \cF_l$ 
are linearly independent ($\xi\in\q^*$)
if and only if the orbit $Q{\cdot}\xi\subset \q^*$ has the maximal dimension.
This relies on the theory developed by the second author in \cite{kys-1param}.
Since $\deg \cF_i^\bullet=\deg \cF_i$, we see that, 
unlike the case of the adjoint representation of $\q$, the algebras $\eus S(\g)^G$ and
$\eus S(\q)^Q$ here have the same degrees of basic invariants.

A lot of information on the algebras $\eus S(\g_e)^{G_e}$ and $\eus S(\g_e)^{\g_e}$ is 
obtained in \cite{ppy}, and translating 
some of those results in the setting of parabolic contractions yields applications of 
Theorem~\ref{thm:main3}.
For $\g$ of type $\GR{A}{l}$ or $\GR{C}{l}$, \un{all} nilpotent elements $e$ satisfy the 
above condition on $^{e\!}\cF_1,\ldots,\,^{e\!}\cF_l$ (see \cite[Section\,4]{ppy}), which implies that 
$\eus S(\q)^Q$ is a polynomial algebra for \un{all} parabolic contractions of $\g=\slno$ or $\spn$.
For $\g$ of type $\GR{B}{l}$, the same result is obtained for a special class of parabolic 
contractions, see an explicit description in Theorem~\ref{thm:B}. We also prove that, for
all Richardson elements in question, the multiset of degrees $\{^{e\!}\cF_i\}$ coincides with the multiset of degrees of basic invariants in $\eus S(\el)^L$ (Proposition~\ref{degrees-ppy}). This provides a full description of bi-degrees of basic invariants in $\eus S(\q)^Q$.
\\  \indent
There are also `good' Richardson orbits and parabolic contractions for all simple $\g$.
The case of regular nilpotent elements, with $P=B$, is covered by our previous article \cite{alafe1},
and here we prove that Theorem~\ref{thm:main3} applies to the subregular nilpotent elements and hence to the
contractions associated with the minimal parabolic subalgebras (see Section~\ref{sect5}).
Although subregular nilpotent elements have some peculiarities if $\g$ is of type $\GR{G}{2}$, the resulting description appears to be the same for all simple Lie algebras.
Unfortunately, there are Richardson elements $e$ (at least for $\g=\son$) such that
$^{e\!}\cF_1,\ldots,\,^{e\!}\cF_l$ are algebraically dependent for any choice of $\cF_1,\dots,\cF_l$~\cite[Example\,4.1]{ppy}. This implies that $\cF_1^\bullet,\dots, \cF_l^\bullet$ are also algebraically dependent, and our technique does not apply. However, this does \un{not} necessarily mean that here $\eus S(\q)^Q$ cannot be a polynomial algebra. 

To a great extent, article \cite{ppy} was motivated by the following conjecture of Premet:
\\[.7ex]
{\sl If $e\in\g$ is a nilpotent element, then $\eus S(\g_e)^{\g_e}$ is a graded 
polynomial algebra in $l$ variables.}
\\[.7ex]
Since then, it was discovered that this conjecture is false. A counterexample, with $\g$ of type
$\GR{E}{8}$, is presented in \cite{demolish}.
Therefore, it is a challenge to classify all nilpotent elements (orbits) such that 
$\eus S(\g_e)^{\g_e}$ is graded polynomial. We hope that theory of parabolic contractions can
provide new insights on the structure of the algebras $\eus S(\g_e)^{\g_e}$ and $\eus S(\g_e)^{G_e}$
for Richardson elements $e$.

\noindent
{\sl Main notation.} \nopagebreak

-- \ the centraliser in $\g$ of $x\in\g$ is denoted by $\g_x$.  

-- \ $\varkappa$ is the Killing form on $\g$.

-- \ If $X$ is an irreducible variety, then $\bbk[X]$ is the algebra of regular functions 
and $\bbk(X)$ is the field of rational functions on $X$. If $X$ is acted upon by an algebraic 
group $A$, then $\bbk[X]^A$ and $\bbk(X)^A$ denote the subsets of respective 
$A$-invariant functions.

-- \ If $\bbk[X]^A$ is finitely generated, then $X\md A:=\spe(\bbk[X]^A)$ and the {\it
quotient morphism\/}
$\pi: X\to X\md A$ is determined by the inclusion $\bbk[X]^A \hookrightarrow \bbk[X]$.
 If  $\bbk[X]^A$ is graded polynomial, then the elements of any set of algebraically independent homogeneous generators  will be referred to as {\it basic invariants\/}.

-- \ $\eus S^i(V)$ is the $i$-th symmetric power of the vector space $V$ over $\bbk$ and $\eus S(V)=\oplus_{i\ge 0}\eus S^i(V)$ is the symmetric algebra of $V$ over $\bbk$; \ $\bbk[V]_n=\eus S^n(V^*)$ and 
$\bbk[V]=\eus S(V^*)$.

\section{Constructing invariants for parabolic contractions} 
\label{sect1}

\noindent
The Lie algebra $\q=\p\ltimes\n_-^a$ is an In\"on\"u-Wigner (= $1$-parameter)
contraction of $\g$.
In \cite[Sect.\,1]{alafe1}, we provided a general method for constructing invariants of adjoint and 
coadjoint representations of such contractions from invariants of the initial Lie algebra.
Here we recall the relevant notation and the method in the setting of parabolic contractions.

The words  `In\"on\"u-Wigner contraction' mean that the Lie bracket  in $\q$
\eqref{eq:skobka} can be obtained in the following way.
Consider the invertible linear map $\mathsf c_t: \g\to \g$, $t\in \bbk\setminus\{0\}$, 
such that $\mathsf c_t(p+\eta)=p+t\eta$ \ ($p\in\p$, $\eta\in\n_-$) and
define the new bracket  $[\ ,\ ]_{(t)}$
on the vector space $\g$ by the rule
\[
     [x,y]_{(t)}:= \mathsf c_t^{-1}\bigl( [\mathsf c_t(x), \mathsf c_t(y)]\bigr), \quad x,y\in\g \ .
\]
Write $\g_{(t)}$ for the corresponding Lie algebra.
The operator $(\mathsf c_t)^{-1}=\mathsf c_{t^{-1}}: \g\to \g_{(t)}$ yields an
isomorphism between the Lie algebras $\g=\g_{(1)}$ and $\g_{(t)}$,  hence
all algebras $\g_{(t)}$ are isomorphic.
It is easily seen that $\lim_{t\to 0}\g_{(t)}\simeq \p\ltimes (\g/\p)^a=\q$. 

To construct invariants of the coadjoint representation of $Q$, we use the decomposition
$\g=\p\oplus\n_-$ and the corresponding bi-grading 
\beq   \label{eq:nomer1}
   \bbk[\g^*]=\eus S(\g)=\bigoplus_{i,j\ge 0}\eus S^{i}(\p)\otimes \eus S^j(\n_-)
\eeq
If $\cH\in \eus S(\g)$ is homogeneous (of total degree $n$) then $\cH^\bullet$ stands for its 
bi-homogeneous component having the highest degree with respect to $\n_-$. That is, if 
$\cH=\sum_{a\le i\le b} \cH^{(n-i,i)}$,
where $\cH^{(n-i,i)}\in \eus S^{n-i}(\p)\otimes \eus S^i(\n_-)$ and
$\cH^{(n-b,b)}\ne 0$, then $\cH^\bullet:=\cH^{(n-b,b)}$. In this situation, we also set
$\deg_\p(\cH^\bullet)=n-b$ and $\deg_{\n_-}(\cH^\bullet)=b$.

\begin{thm}[\protect{\cite[Theorem\,1.1]{alafe1}}]   \label{thm:coadj-inonu}
If  $\cH\in \eus S^n(\g)^G=\bbk[\g^*]^G_n$, then $\cH^{\bullet}\in \eus S^n(\q)^Q=\bbk[\q^*]^Q_n$.
\end{thm}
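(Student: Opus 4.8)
The plan is to exploit the In\"on\"u--Wigner description of $\q$ as the limit $\lim_{t\to 0}\g_{(t)}$ and to track what happens to a $G$-invariant under the rescaling maps $\mathsf c_t$. Concretely, fix $\cH\in\eus S^n(\g)^G$ and, via the bi-grading \eqref{eq:nomer1}, write $\cH=\sum_{a\le i\le b}\cH^{(n-i,i)}$ with $\cH^{(n-b,b)}=\cH^\bullet\ne 0$. Pulling back $\cH$ by the dual of $\mathsf c_t$ (equivalently, applying $\mathsf c_t$ to the symmetric-algebra side) multiplies the component $\cH^{(n-i,i)}$ by $t^i$, so $\mathsf c_t^*\cH=\sum_i t^i\,\cH^{(n-i,i)}$. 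Dividing by the top power $t^{b}$ and letting $t\to 0$ produces $\cH^\bullet$ as the limit $\lim_{t\to 0} t^{-b}\,\mathsf c_t^*\cH$. The point is that $\mathsf c_t^*\cH$ is, up to the linear change of coordinates $\mathsf c_t$, still a $G$-invariant for the Lie algebra $\g$, but when we regard it through the isomorphism $\mathsf c_{t^{-1}}\colon\g\to\g_{(t)}$ it becomes an invariant of the \emph{adjoint} (hence, after dualising, coadjoint) action of the group $G_{(t)}$ attached to $\g_{(t)}$.

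The key steps, in order, are as follows. First I would make the rescaling bookkeeping precise: identify $\mathsf c_t$ with a one-parameter subgroup action (diagonal with eigenvalues $1$ on $\p$ and $t$ on $\n_-$) and verify that on $\eus S^{n-i}(\p)\otimes\eus S^i(\n_-)$ it acts by the scalar $t^i$, so that $t^{-b}\mathsf c_t^*\cH\to\cH^\bullet$ coefficientwise. Second, I would spell out the infinitesimal invariance condition: $\cH$ being $G$-invariant is equivalent to the system of equations $\{x,\cH\}_\g=0$ for all $x\in\g$, where $\{\ ,\ \}_\g$ is the Lie--Poisson bracket on $\eus S(\g)$ determined by $[\ ,\ ]_\g$; transporting through $\mathsf c_t$, this says $\{x,\mathsf c_t^*\cH\}_{(t)}=0$ for all $x$, where $\{\ ,\ \}_{(t)}$ is the Poisson bracket built from $[\ ,\ ]_{(t)}$. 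Third, I would take $t\to 0$ in this family of identities: since the brackets $[\ ,\ ]_{(t)}$ depend polynomially on $t$ and converge to $[\ ,\ ]_\q$, and since $t^{-b}\mathsf c_t^*\cH\to\cH^\bullet$, the relation $\{x,\,t^{-b}\mathsf c_t^*\cH\}_{(t)}=0$ passes to the limit to give $\{x,\cH^\bullet\}_\q=0$ for all $x\in\g=\q$. This is exactly the infinitesimal $Q$-invariance of $\cH^\bullet$. Fourth, I would note that $\cH^\bullet$ is homogeneous of the same total degree $n$ (it is bi-homogeneous of bi-degree $(n-b,b)$), and that $Q$ is connected, so infinitesimal invariance upgrades to genuine $Q$-invariance, i.e.\ $\cH^\bullet\in\eus S^n(\q)^Q=\bbk[\q^*]^Q_n$.

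The main obstacle is making the limit argument rigorous at the level of Poisson brackets rather than merely asserting it. One has to check that all the relevant objects ($\mathsf c_t^*\cH$, the structure constants of $[\ ,\ ]_{(t)}$, the Poisson bracket $\{x,\cdot\}_{(t)}$) are polynomial — or at worst Laurent-polynomial — in $t$, so that after clearing the single denominator $t^{b}$ everything extends regularly across $t=0$ and the vanishing identity specialises. In practice this is a routine continuity/specialisation statement, but it is the technical heart of the proof; once it is in place, the rest is formal. An alternative, entirely equivalent route avoids explicit limits: regard $\eus S(\g)=\eus S(\q)$ as a single vector space, use that the $Q$-action differs from the $G_{(t)}$-action only by the isomorphism $\mathsf c_{t^{-1}}$, and observe directly from the bracket formula \eqref{eq:skobka} that the top $\n_-$-degree part of $\{x,\cH\}_\g$ equals $\{x,\cH^\bullet\}_\q$ plus lower $\n_-$-degree terms; since $\{x,\cH\}_\g=0$, its top component vanishes, giving $\{x,\cH^\bullet\}_\q=0$. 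This second formulation is perhaps the cleanest to write out and makes the ``highest $\n_-$-degree component'' in the definition of $\cH^\bullet$ do exactly the work it was designed for.
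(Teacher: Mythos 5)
The paper does not actually prove this theorem: it is quoted from \cite[Theorem\,1.1]{alafe1}, and the only trace of its proof here is the specialisation framework recalled in Section~\ref{sect3} (the operators $\mathsf c_t$, the degree $\deg_t\cH$, and the identity $\lim_{t\to 0}t^{d}\,\mathsf c_{t^{-1}}(\cH)=\cH^\bullet$). Your strategy --- transport the invariance of $\cH$ through the Lie algebra isomorphisms $\g\simeq\g_{(t)}$, normalise by the top power of $t$, and specialise the resulting polynomial family of Lie--Poisson identities at the contraction point --- is exactly that framework, and your second, limit-free variant (compare the top $\n_-$-bi-degree component of $\{x,\cdot\}_\g$ with $\{x,\cdot\}_\q$, treating $x\in\p$ and $x\in\n_-$ separately since the top degree shift is $0$ in the first case and $+1$ in the second) is correct and is probably the cleanest thing to write down.

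There is, however, one genuine error to repair: your normalisation points in the wrong direction. Since $\mathsf c_t^*\cH=\sum_i t^{i}\cH^{(n-i,i)}$ with $i\le b$, the expression $t^{-b}\mathsf c_t^*\cH=\sum_i t^{i-b}\cH^{(n-i,i)}$ has a pole at $t=0$ in every term with $i<b$, so the limit $\lim_{t\to 0}t^{-b}\mathsf c_t^*\cH$ you assert simply does not exist. The correct statement, and the one the paper records, is $\lim_{t\to 0}t^{b}\,\mathsf c_{t^{-1}}(\cH)=\cH^\bullet$; equivalently you may keep $\mathsf c_t$ but send $t\to\infty$. This is not purely cosmetic: the contraction $\g_{(t)}\to\q$ occurs as $t\to 0$, and it is $\mathsf c_{t^{-1}}\colon\g\to\g_{(t)}$ (not $\mathsf c_t$) that is the Lie algebra isomorphism carrying $\eus S(\g)^{\g}$ into $\eus S(\g_{(t)})^{\g_{(t)}}$, so with your labels the invariant $\mathsf c_t^*\cH$ lives on $\g_{(t^{-1})}$ and your family of identities degenerates at the wrong end of the parameter. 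The slip is consistent throughout, so the global substitution $t\mapsto t^{-1}$ fixes everything; after that, the ``technical heart'' you worry about is indeed routine, because $[x,y]_{(t)}$ is honestly polynomial in $t$ (the only potential $t^{-1}$ would come from the $\n_-$-component of $[\p,\p]$, which vanishes since $\p$ is a subalgebra), so the identity $\{x,\,t^{b}\mathsf c_{t^{-1}}(\cH)\}_{(t)}=0$, valid for $t\ne 0$, extends across $t=0$ and yields $\{x,\cH^\bullet\}_\q=0$; connectedness of $Q$ then finishes the argument as you say.
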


\noindent
Say that $\cH^\bullet$ is the {\it highest component\/} of $\cH\in\bbk[\g^*]^G_n$
(with respect to the parabolic contraction $\g\leadsto \q$). 
Let $\eus L^\bullet(\bbk[\g^*]^G)$ denote the linear span of
$\{ \cH^\bullet \mid \cH\in\bbk[\g^*]^G \ \text{ is homogeneous}\}$.
Clearly, it 
is a graded algebra, and Theorem~\ref{thm:coadj-inonu} implies that 
$\eus L^\bullet(\bbk[\g^*]^G)\subset \bbk[\q^*]^Q$.
We say that $\eus L^\bullet(\bbk[\g^*]^G)$ is the {\it algebra of highest components\/}
for $\bbk[\g^*]^G$ (relative to bi-grading~\eqref{eq:nomer1}).

Invariants of the adjoint representation of $Q$ can be constructed in a similar (``dual'') way.
Set $\n_-^*:=\p^\perp$, the annihilator of $\p$ in $\g^*$. Likewise, $\p^*:=(\n_-)^\perp$.
Then $\g^*=\n_-^*\oplus\p^*$.
Having identified the vector spaces $\g^*$ and $\q^*$,
we play the same game with the bi-grading
\beq   \label{eq:nomer2}
   \bbk[\g]=\eus S(\g^*)=\bigoplus_{i,j\ge 0}\eus S^{i}(\p^*)\otimes \eus S^j(\n_-^*)
\eeq
and  homogeneous elements of $\eus S(\g^*)^G=\bbk[\g]^G$.  
For $\cH\in \eus S^n(\g^*)$, let $\cH_\bullet$
denote its bi-homogeneous component relative to bi-grading 
\eqref{eq:nomer2} having the highest degree with respect to $\p^*$. 

\begin{thm}[\protect{\cite[Theorem\,1.2]{alafe1}}]   \label{thm:adj-inonu}
If\/ $\cH\in \bbk[\g]^{G}_n$, then $\cH_\bullet\in \bbk[\q]^Q_n$.
\end{thm}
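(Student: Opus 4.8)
The plan is to mirror the proof of Theorem~\ref{thm:coadj-inonu} (i.e. \cite[Theorem 1.1]{alafe1}), but with the roles of $\p$ and $\n_-$ interchanged at the level of dual spaces. The key point is that the adjoint representation of $Q$ on $\q$ is, up to the identification $\g\simeq\q$, the $t\to 0$ limit of a family of $G$-actions transported through the In\"on\"u-Wigner rescaling $\mathsf c_t$. Concretely, for $\cH\in\bbk[\g]^G_n$ one considers the rescaled functions $\mathsf c_t^*(\cH)=\cH\circ\mathsf c_t$ and examines their behaviour as $t\to 0$. Since $\mathsf c_t$ acts on $\g=\p\oplus\n_-$ with weights $0$ on $\p$ and $1$ on $\n_-$, the dual map on $\bbk[\g]=\eus S(\p^*)\otimes\eus S(\n_-^*)$ scales $\eus S^i(\p^*)\otimes\eus S^j(\n_-^*)$ by $t^{j}$; hence the \emph{lowest} power of $t$ occurring in $\mathsf c_t^*(\cH)$ is $t^{n-a}$ where $a=\deg_{\p^*}(\cH_\bullet)$ is the top $\p^*$-degree, and $\lim_{t\to0} t^{-(n-a)}\mathsf c_t^*(\cH)=\cH_\bullet$.

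First I would record the precise form of the adjoint action of $Q$ after identifying $\g^*\cong\q^*$ as in~\eqref{eq:nomer2}; equivalently, I would work directly on $\q$ with the bracket~\eqref{eq:skobka} and the group action~\eqref{eq:adj-action}, and observe that $\Ad_Q$ is obtained from $\Ad_G$ precisely by the contraction procedure, i.e. $\g_{(t)}$ has adjoint group $G_{(t)}\cong G$ acting via $\mathsf c_{t}\circ\Ad_G(\cdot)\circ\mathsf c_t^{-1}$, and $\lim_{t\to0}$ of this family of actions on the fixed vector space $\g$ is $\Ad_Q$. Second, I would note that $G$-invariance of $\cH$ gives, for every $t\ne 0$ and every $g\in G$, the identity $\cH(\mathsf c_t\,\Ad(g)\,\mathsf c_t^{-1}(x))=\cH(x)$ for all $x\in\g$; rewriting this in terms of $\mathsf c_t^*(\cH)$ shows $\mathsf c_t^*(\cH)$ is invariant under the $t$-twisted $G$-action on $\g$. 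Third, I would multiply by the normalising scalar $t^{-(n-a)}$, which does not affect invariance, and pass to the limit $t\to 0$: the left-hand side tends to $\cH_\bullet$ composed with $\lim_{t\to0}$ of the twisted action, i.e. with $\Ad_Q$, while invariance is preserved in the limit because it is a closed condition (it suffices to check invariance under the Lie algebra $\q$, i.e. that $\cH_\bullet$ is annihilated by all the vector fields coming from $\ad_\q$, and these are the limits of the corresponding vector fields for $\g_{(t)}$). This yields $\cH_\bullet\in\bbk[\q]^Q_n$, and homogeneity of degree $n$ is clear since $\cH_\bullet$ is a bi-homogeneous component of $\cH$.

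The main obstacle, as usual in such limiting arguments, is making the ``pass to the limit'' step rigorous: one must know that the relevant component $\cH_\bullet$ is nonzero (which is automatic from its definition as the top $\p^*$-degree component) and, more importantly, that invariance survives the limit. I would handle this by phrasing invariance infinitesimally. For each $x\in\q=\g$ the vector field $v_x^{(t)}$ on $\g$ given by $y\mapsto[x,y]_{(t)}$ depends polynomially on $t$ (indeed on $t,t^{-1}$ but after clearing denominators it is polynomial), and $v_x^{(0)}$ is exactly the vector field of $\ad_\q(x)$; the condition $v_x^{(t)}(\mathsf c_t^*\cH)=0$ for all $t\ne0$ is a polynomial identity in $t$, so each graded-in-$t$ piece vanishes, and extracting the appropriate piece gives $v_x^{(0)}(\cH_\bullet)=0$. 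Since this holds for all $x$ and $Q$ is connected, $\cH_\bullet\in\bbk[\q]^Q$. A clean alternative, if one prefers to cite rather than reprove, is simply to invoke the symmetry between Theorems~\ref{thm:coadj-inonu} and~\ref{thm:adj-inonu}: applying Theorem~\ref{thm:coadj-inonu} to the dual situation — the parabolic contraction attached to the opposite parabolic $\p_-$ acting on $\g^*$, under which $\eus S(\g^*)=\eus S(\n_-^*)\otimes\eus S(\p^*)$ plays the role of~\eqref{eq:nomer1} — converts the statement about $\cH^\bullet$ into the statement about $\cH_\bullet$ verbatim; this is essentially the content of \cite[Theorem 1.2]{alafe1} being deduced from \cite[Theorem 1.1]{alafe1}, so it is legitimate to present the proof as ``dual to that of Theorem~\ref{thm:coadj-inonu}'' and leave the routine bookkeeping to the reader.
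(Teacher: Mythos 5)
Your main argument is correct and is, in substance, the standard proof: the paper itself only cites \cite[Theorem\,1.2]{alafe1} and gives no argument, and the proof in that reference is exactly the limiting procedure you describe. Concretely, since $\mathsf c_t^*$ fixes $\p^*=(\n_-)^\perp$ and multiplies $\n_-^*=\p^\perp$ by $t$, one has $\mathsf c_t^*\cH=\sum_j t^j\cH^{(n-j,j)}$, so $\lim_{t\to0}t^{-(n-a)}\mathsf c_t^*\cH=\cH_\bullet$ with $a=\deg_{\p^*}\cH_\bullet$; and your infinitesimal formulation is the right way to make the passage to the limit rigorous. Indeed, because $[\n_-,\n_-]\subset\n_-$, the bracket $[x,y]_{(t)}$ is an honest polynomial in $t$ whose constant term is the bracket of $\q$, so $v_x^{(t)}(\mathsf c_t^*\cH)$ is a polynomial in $t$ vanishing for all $t\ne0$, hence identically; its coefficient of $t^{n-a}$ is $v_x^{(0)}(\cH_\bullet)$, which gives $\ad_\q(x)$-invariance of $\cH_\bullet$ for every $x$ and hence $Q$-invariance by connectedness. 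Homogeneity of degree $n$ is clear. This part needs no repair.

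The ``clean alternative'' at the end, however, is not legitimate and you should delete it. Applying Theorem~\ref{thm:coadj-inonu} to the contraction attached to the opposite parabolic produces invariants of the \emph{coadjoint} representation of $Q_-:=P_-\ltimes\exp(\n^a)$, not of the adjoint representation of $Q$; for a non-reductive Lie algebra $\ad$ and $\ads$ are genuinely non-isomorphic (in $\ad_\q$ the abelian ideal $\n_-^a$ is a submodule with quotient $\p$, while in $\ads_\q$ it is the annihilator $\p^*$ that is the submodule), so the statement you would obtain is about the wrong module. Worse, even at the level of bi-gradings the two constructions select opposite extremes: under the Killing form $\p^*\simeq\p_-$ and $\n_-^*\simeq\n$, so $\cH_\bullet$ is the bi-homogeneous component of \emph{lowest} $\n$-degree, whereas Theorem~\ref{thm:coadj-inonu} applied to $\p_-$ would extract the component of \emph{highest} $\n$-degree. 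This is precisely the point the paper makes after stating Theorem~\ref{thm:adj-inonu}: the bi-gradings \eqref{eq:nomer1} and \eqref{eq:nomer2} are essentially different, and $\cH^\bullet$ and $\cH_\bullet$ belong to different algebras of $Q$-invariants. So keep the contraction/limit argument as the proof and treat the ``duality'' only as an analogy, not as a reduction of one theorem to the other.
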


\noindent
Likewise, one obtains the respective algebra of highest components, $\eus L_\bullet(\bbk[\g]^G)$, 
which can be regarded as a graded subalgebra of $\bbk[\q]^Q$.

Since $\g$ is semisimple, one may identify $\g$ and $\g^*$ (and hence
$\eus S(\g)$ and $\eus S(\g^*)$) as $G$-modules using the Killing form $\varkappa$. 
Note that upon this identification $\eus S(\g)$ and $\eus S(\g^*)$, one obtains two essentially 
different bi-gradings, \eqref{eq:nomer1} and \eqref{eq:nomer2}, of one and the same algebra.
Namely, since $\n_-^*\simeq \n$ and $\p^*\simeq \p_-$, these bi-gradings are determined by the  
decompositions $\g=\p\oplus\n_-$ and $\g^*=\g=\n\oplus\p_-$, respectively. 
The upshot is that, for a given homogeneous $G$-invariant $\cH$, there are two constructions
of the ``highest bi-homogeneous component'', $\cH^\bullet$ and $\cH_\bullet$. But these highest components are determined via different bi-gradings and belong to different algebras of $Q$-invariants!

 Since $\g$ and $\q$ (as well as $\g^*$ and $\q^*$) are naturally identified as vector spaces, we always think of $\q$ and $\q^*$ as vector spaces equipped with the decompositions
\[
          \q=\p\oplus\n_- \quad \text{ and } \quad  \q^*=\n\oplus\p_- .
\]
All summands here are $P$-modules and in both cases, the second summand
is $Q$-stable.
\begin{lm}[\protect{\cite[Lemma\,1.3]{alafe1}}]   \label{lm:poincare}
The graded algebras $\bbk[\g^*]^G$ and $\eus L^\bullet(\bbk[\g^*]^G)$ have the same Poincar\'e series, i.e., 
$\dim\bbk[\g^*]^G_n=\dim\eus L^\bullet(\bbk[\g^*]^G_n)$ for all $n\in\BN$; and likewise for\/ $\bbk[\g]^G$ and $\eus L_\bullet(\bbk[\g]^G)$.
\end{lm}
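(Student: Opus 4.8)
The plan is to show that the assignment $\cH \mapsto \cH^\bullet$, restricted to a suitably chosen graded subspace, is a linear isomorphism in each degree, so that passing to highest components loses no dimension. First I would fix a degree $n$ and work inside $\eus S^n(\g)$ equipped with the $\BN$-filtration coming from the $\n_-$-degree in bi-grading \eqref{eq:nomer1}: for $\cH = \sum_i \cH^{(n-i,i)}$, let $\nu(\cH)$ be the largest $b$ with $\cH^{(n-b,b)}\ne 0$, so that $\cH^\bullet = \cH^{(n-b,b)}$ is the top symbol. This filtration is $G$-invariant in the sense that the $\g$-action (hence the $G$-action) can only raise the $\n_-$-degree: indeed $[\p,\p]\subseteq\p$, $[\p,\n_-]\subseteq\p\oplus\n_-$, $[\n_-,\n_-]\subseteq\n_-\oplus\p_-$, so on the associated graded the bracket is precisely the contracted bracket \eqref{eq:skobka} of $\q$. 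This is the structural input that makes $\cH^\bullet$ a $Q$-invariant (Theorem~\ref{thm:coadj-inonu}), and it is exactly what I need here as well.

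Next I would argue that taking top symbols is injective on a graded complement. Concretely, choose a graded basis $\cH_1,\dots,\cH_r$ of $\bbk[\g^*]^G_n$ (say $r = \dim\bbk[\g^*]^G_n$) and consider the list of highest components $\cH_1^\bullet,\dots,\cH_r^\bullet$. These lie in distinct bi-degrees in general, but within a fixed bi-degree $(n-b,b)$ the subspace $\{\cH^\bullet : \cH\in\bbk[\g^*]^G_n,\ \nu(\cH)=b\}\cup\{0\}$ has dimension equal to $\dim\{\cH\in\bbk[\g^*]^G_n : \nu(\cH)\le b\} - \dim\{\cH : \nu(\cH)\le b-1\}$: this is the standard ``symbol map on a filtered vector space is an isomorphism onto the associated graded'' fact. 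Summing over $b$, the total dimension of $\eus L^\bullet(\bbk[\g^*]^G)_n$ — which is the span of all these top symbols in degree $n$ — equals $\dim\bbk[\g^*]^G_n$. The one point requiring care is that a priori $\eus L^\bullet(\bbk[\g^*]^G)_n$ is spanned by $\cH^\bullet$ for \emph{all} homogeneous invariants $\cH$ of degree $n$, not just basis elements; but since $(\lambda\cH + \mu\cH')^\bullet$ is either $\lambda\cH^\bullet+\mu\cH'^\bullet$ (when $\nu$ agree and don't drop) or one of $\cH^\bullet,\cH'^\bullet$ (when $\nu$ differ), every top symbol already lies in the span of top symbols of a fixed graded basis, so no extra dimensions appear.

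Finally, the statement for $\bbk[\g]^G$ and $\eus L_\bullet(\bbk[\g]^G)$ is entirely parallel: one replaces \eqref{eq:nomer1} by \eqref{eq:nomer2}, the decomposition $\g=\p\oplus\n_-$ by $\g^*=\n\oplus\p_-$, and the filtration by $\n_-$-degree by the filtration by $\p^*$-degree; the same symbol-map argument applies verbatim, using Theorem~\ref{thm:adj-inonu} in place of Theorem~\ref{thm:coadj-inonu} for the invariance. I expect the only genuinely delicate step to be the bookkeeping in the middle paragraph — making precise that the span of top symbols of arbitrary invariants coincides with the span of top symbols of a fixed basis, i.e. that the ``leading term'' operation is compatible with the filtration — but this is a soft linear-algebra fact about filtered vector spaces and carries no representation-theoretic content.
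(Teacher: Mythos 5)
Your argument is correct: it is exactly the standard symbol-map fact that for a filtered finite-dimensional space $W=\bbk[\g^*]^G_n$ with $W_{\le b}=\{\cH : \cH^{(n-i,i)}=0 \text{ for } i>b\}$, the set of top symbols in bi-degree $(n-b,b)$ is the image of the linear map $\cH\mapsto\cH^{(n-b,b)}$ on $W_{\le b}$, hence a subspace of dimension $\dim W_{\le b}-\dim W_{\le b-1}$, and these images sit in distinct bi-degrees, so the dimensions telescope to $\dim W$. The paper gives no proof here, deferring to \cite[Lemma 1.3]{alafe1}, and the argument there is this same leading-term computation; the only superfluous part of your write-up is the first paragraph on $G$-invariance of the filtration, which is needed for Theorem~\ref{thm:coadj-inonu} but plays no role in the purely linear-algebraic dimension count.
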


By \cite[Theorem\,2.7]{coadj07}, the algebras of invariants of the adjoint and coadjoint representations of In\"on\"u-Wigner contractions are bi-graded.
The embeddings $\eus L^\bullet(\bbk[\g^*]^G)\subset \bbk[\q^*]^Q$ and
$\eus L_\bullet(\bbk[\g]^G)\subset \bbk[\q]^Q$ (together with Lemma~\ref{lm:poincare}) prompt 
the natural question whether these are equalities.
We will see in Section~\ref{sect2} that
$\eus L_\bullet(\bbk[\g]^G)\subsetneqq \bbk[\q]^Q$ unless $\el=\g$. Moreover, the $Q$-invariants 
$\cH_\bullet$ play no role in describing $\bbk[\q]^Q$.
But the situation is different for the coadjoint representation. In all cases,
when we can describe the algebra $\bbk[\q^*]^Q$,
the equality $\eus L^\bullet(\bbk[\g^*]^G)= \bbk[\q^*]^Q$ will be an important ingredient
of the final result, see Sections~\ref{sect3}--\ref{sect5}. 
Earlier, we proved that this equality holds for $\p=\be$, see~\cite[Section\,3]{alafe1}.
Similar phenomena occur also for the adjoint and coadjoint representations of
$\BZ_2$-contractions of $\g$, see \cite{rims07}.

\section{Invariants of the adjoint representation of $Q$} 
\label{sect2}

\noindent
In this section, we describe the algebra of invariants and the 
quotient morphism for the adjoint representation of any parabolic contraction of $\g$. 

\noindent
To prove that a certain set of invariants of an algebraic group action generates  the whole
algebra of invariants, we use the following variation of Igusa's lemma.  

\begin{lm}   \label{lm:igusa}
Let $A$ be a connected algebraic group acting regularly on an irreducible affine variety 
$X$. Suppose that a finitely generated
subalgebra $S\subset \bbk[X]^A$ has the following properties:
\begin{itemize}
\item[\sf (i)] \  $Y:=\spe S$ is normal;
\item[\sf (ii)] \ generic fibres of $\pi: X\to Y$ are irreducible; 
\item[\sf (iii)] \ $\dim X-\dim Y=\max_{x\in X}\dim A{\cdot}x$.
\item[\sf (iv)] \ $\Ima(\pi)$ contains an open subset\/ $\Omega$ of\/ $Y$ such that
$\codim (Y\setminus \Omega)\ge 2$.
\end{itemize}
Then $S=\bbk[X]^A$. In particular, the algebra of $A$-invariants is finitely generated.
\end{lm}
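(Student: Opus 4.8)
The plan is to prove the \emph{a priori} stronger statement that every $A$-invariant regular function on $X$ already lies in $S$; together with the hypothesis $S\subseteq\bbk[X]^A$ this yields $S=\bbk[X]^A$, and finite generation follows. Note first that $Y$ is irreducible (since $S$, being a subring of the domain $\bbk[X]$, is a domain) and that $\pi\colon X\to Y$ is dominant (its image is dense by {\sf (iv)}). I would begin by showing that a \emph{generic fibre of $\pi$ is a single orbit closure}: the locus $X_{\max}\subseteq X$ of points whose $A$-orbit has maximal dimension is open and dense, so $\pi|_{X_{\max}}$ is still dominant, and hence for $y$ in a dense open subset of $Y$ the fibre $\pi^{-1}(y)$ is irreducible (by {\sf (ii)}), of dimension $\dim X-\dim Y$, and meets $X_{\max}$; picking $x\in\pi^{-1}(y)\cap X_{\max}$, condition {\sf (iii)} gives $\dim A{\cdot}x=\dim X-\dim Y=\dim\pi^{-1}(y)$, so $\overline{A{\cdot}x}=\pi^{-1}(y)$. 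Consequently any $f\in\bbk[X]^A$ is constant on generic fibres of $\pi$ and therefore descends to a rational function $\hat f\in\bbk(Y)$ with $\pi^*\hat f=f$.

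The remaining task is to see that such an $\hat f$ is in fact regular, i.e.\ $\hat f\in\bbk[Y]=S$. Here I would use normality: since $Y$ is normal, $\bbk[Y]=\bigcap_D\mathcal O_{Y,D}$, the intersection being over all prime divisors $D\subset Y$, with each $\mathcal O_{Y,D}$ a discrete valuation ring. So it is enough to fix a prime divisor $D$ and prove $\hat f\in\mathcal O_{Y,D}$. By {\sf (iv)} the divisor $D$ is not contained in $Y\setminus\Omega$ (which has codimension $\ge 2$), hence $D\cap\Omega$ is dense in $D$ and contained in $\Ima\pi$; restricting the surjection $\pi^{-1}(\Omega)\to\Omega$ over $D\cap\Omega$ and taking an irreducible component of $\pi^{-1}(D\cap\Omega)$ that dominates $D$, I obtain a point $x_0\in X$ whose image is the generic point $\eta_D$ of $D$. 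Thus $\pi^*\colon\mathcal O_{Y,D}=\mathcal O_{Y,\eta_D}\to\mathcal O_{X,x_0}$ is a local homomorphism of local rings.

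Now comes the short local computation. Let $t$ be a uniformiser of the DVR $\mathcal O_{Y,D}$ and $v_D$ its valuation, and suppose $v_D(\hat f)=-m<0$, so $\hat f=u\,t^{-m}$ with $u\in\mathcal O_{Y,D}^\times$. Applying $\pi^*$ inside $\bbk(X)$ gives $f=\pi^*(u)\,(\pi^* t)^{-m}$, where $\pi^*(u)$ is a unit of $\mathcal O_{X,x_0}$ (locality of $\pi^*$) and $\pi^* t$ is a nonzero element of $\mathfrak m_{X,x_0}$ ($\pi^*$ is injective since $\pi$ is dominant). But $f\in\bbk[X]\subseteq\mathcal O_{X,x_0}$, so $(\pi^* t)^{-m}\in\mathcal O_{X,x_0}$, which forces $\pi^* t$ to be a unit of $\mathcal O_{X,x_0}$ — contradicting $\pi^* t\in\mathfrak m_{X,x_0}$. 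Hence $v_D(\hat f)\ge 0$ for every prime divisor $D$, so $\hat f\in\bbk[Y]=S$, giving $\bbk[X]^A\subseteq S\subseteq\bbk[X]^A$.

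The step I expect to be the crux is the one inside the second paragraph: passing from "$\pi$ is dominant" to "there is a point of $X$ lying over the generic point of an \emph{arbitrary} prime divisor of $Y$." This is exactly what hypothesis {\sf (iv)} (codimension $\ge 2$, so that every prime divisor meets $\Omega\subseteq\Ima\pi$) is designed to provide, and it would genuinely fail if one only assumed $\pi$ dominant. A minor technical caution: $\mathcal O_{X,x_0}$ need not be a valuation ring (indeed $X$ need not be normal, nor $x_0$ of codimension one), which is why the computation in the last paragraph works with a uniformiser of $\mathcal O_{Y,D}$ directly rather than by restricting the valuation $v_D$ to $\bbk(X)$.
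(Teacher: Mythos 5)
Your proof is correct, and it is worth recording how it relates to the paper's. Your first step --- using {\sf (ii)} and {\sf (iii)} to show that for $y$ in a dense open subset of $Y$ the fibre $\pi^{-1}(y)$ is the closure of a single $A$-orbit, so that every $A$-invariant is constant on generic fibres --- is exactly the reduction the paper performs; at that point the paper simply notes that {\sf (i)}, {\sf (iv)} and this ``dense orbit in generic fibres'' property are the hypotheses of Igusa's lemma and cites \cite[Lemma\,4]{ig} or \cite[Lemma\,6.1]{rims07}, whereas you go on to give a complete proof of that lemma: descend $f\in\bbk[X]^A$ to $\hat f\in\bbk(Y)$ and verify $\hat f\in\mathcal O_{Y,D}$ for every prime divisor $D$ by the local computation with a uniformiser, invoking normality of $Y$ to conclude $\hat f\in S$. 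Your reading of {\sf (iv)} --- it is there precisely so that every prime divisor of $Y$ has a point of $X$ over its generic point, which mere dominance would not give --- is indeed the role it plays in Igusa's argument, and your caution about $\mathcal O_{X,x_0}$ not being a valuation ring is well placed. Two points you leave implicit and should state: the fibres of $\pi$ are $A$-stable because $S\subset\bbk[X]^A$ (this is needed for $A{\cdot}x\subset\pi^{-1}(y)$, and connectedness of $A$ is what makes $\overline{A{\cdot}x}$ irreducible); and the descent of $f$ to a rational function on $Y$ deserves a word, e.g.\ via the morphism $(\pi,f):X\to Y\times\mathbb{A}^1$, whose image is generically a single point over $Y$, so that its closure is birational to $Y$ and the second coordinate gives $\hat f\in\bbk(Y)$ with $\pi^{*}\hat f=f$.
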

\begin{proof}
By property (iv), $\pi$ is dominant and then properties (ii) and (iii) imply that \\
\hbox to \textwidth{
\quad (ii') \hfil the fibres of $\pi$ over a dense open subset of $Y$ contain a dense $A$-orbit.\hfil}
Then properties (i), (ii'), and (iv) constitute the assumptions of Igusa's lemma, see
\cite[Lemma\,4]{ig} or \cite[Lemma\,6.1]{rims07}
\end{proof}

\begin{rmk}  \label{rmk:zamena}
If the group $A$ is unipotent, then the hypotheses of Lemma~\ref{lm:igusa} 
imply that generic fibres of $\pi$ are just $A$-orbits.
\end{rmk}

Let $\z(\el)$ denote the centre of $\el$ and $\z(\el)_{reg}=\{x\in \z(\el)\mid \g_x=\el\}$.
Then $\z(\el)_{reg}$ is a dense open subset of $\z(\el)$.

\begin{lm}  \label{lm:clear1}
If $x\in \z(\el)_{reg}$  and $n\in\n$ is arbitrary, then 
$x+n$ and $x$ belong to the same $\Ad\,N$-orbit.
\end{lm}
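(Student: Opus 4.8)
The plan is to show that $n$ can be "absorbed" into the orbit $\Ad(N){\cdot}x$ by exhibiting $n' \in \n$ with $\Ad(\exp n')(x) = x+n$, which amounts to solving $(\exp\ad n')(x) - x = n$ inside $\n$. First I would recall that for $x \in \z(\el)_{reg}$ one has $\g_x = \el$, so $\ad x$ is invertible on $\n$ (and on $\n_-$); indeed $\n$ is $\ad x$-stable since $x \in \el$ normalises $\p$ and hence $\n$, and $\Ker(\ad x) \cap \n \subseteq \g_x \cap \n = \el \cap \n = 0$. Thus $\ad x: \n \to \n$ is a bijective linear map. I would next use the grading of $\n$ by $\ad$-height (or equivalently, filter $\n = \n^{(1)} \supset \n^{(2)} \supset \cdots$ by the lower central series, with $[\p, \n^{(i)}] \subseteq \n^{(i)}$ and $[\n^{(i)}, \n^{(j)}] \subseteq \n^{(i+j)}$) to set up an inductive/triangular solution.

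Concretely, I would consider the polynomial map $\Phi: \n \to \n$, $\Phi(n') = \Ad(\exp n')(x) - x$. Expanding, $\Phi(n') = \ad(n')(x) + (\text{higher-order terms in } n')$, and the quadratic-and-higher terms all lie in $\n^{(2)}$ whenever $n' \in \n$, because each bracket $[\n^{(i)},\,\cdot\,]$ raises height; more precisely, writing $n' = \sum_{i\ge 1} n'_i$ according to the height grading, the height-$i$ component of $\Phi(n')$ equals $[n'_i, x] \bmod \n^{(i+1)}$, where the error term depends only on $n'_1, \dots, n'_{i-1}$. Since $\ad x$ preserves each graded piece and is invertible there, I can solve $\Phi(n') = n$ height-by-height: choose $n'_1$ so that the height-$1$ part matches, then $n'_2$ so that the height-$2$ part matches given $n'_1$, and so on. This terminates because $\n$ is nilpotent, and it shows $\Phi$ is surjective (in fact bijective), so in particular $n \in \Ima\Phi$, i.e.\ $x + n \in \Ad(N){\cdot}x$.

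The main obstacle — really the only point needing care — is bookkeeping the higher-order terms of $\Ad(\exp n')$ and verifying the claimed triangular structure with respect to the height filtration, i.e.\ that no term of $\Phi(n') - \ad(n')(x)$ contributes to the lowest nonzero height of $n'$. This follows from the fact that $x$ has height $0$, $n'$ has height $\ge 1$, and every additional bracket strictly increases height, so any term $(\ad n')^k(x)/k!$ with $k \ge 2$ lives in height $\ge 2$, and more generally in height $\ge k \cdot (\text{lowest height appearing in } n')$; combined with invertibility of $\ad x$ on each graded component this makes the inductive step go through. An alternative, slicker phrasing avoids the explicit induction: $\Phi$ is a morphism of affine spaces whose differential at $0$ is $\ad x|_\n$, an isomorphism; since $\Phi$ is $N$-equivariant in a suitable sense — or simply because $\Ad(\exp{\cdot})$ is a polynomial automorphism-type map — one checks $\Phi$ is dominant, hence (being a map between affine spaces of the same dimension with invertible differential somewhere) surjective, and then density of $\z(\el)_{reg}$ lets us conclude; but the height-filtration argument is the cleanest fully rigorous route and is what I would write up.
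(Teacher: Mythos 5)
Your proof is correct, but it takes a genuinely different route from the paper's. The paper argues geometrically: since $\g_x=\el$ and $\el\cap\n=0$, the stabiliser of $x$ in $N$ is trivial, so the orbit $(\Ad\,N)x$ has dimension $\dim\n$; it is contained in the affine subspace $x+\n$; and since orbits of a unipotent group acting on an affine variety are closed (Kostant--Rosenlicht), the orbit is a closed irreducible subvariety of $x+\n$ of full dimension, hence equals $x+\n$. Your argument replaces the closed-orbit theorem by an explicit triangular construction of $n'$ with $\Ad(\exp n')(x)=x+n$, solved degree by degree along the grading of $\n$ determined by $\z(\el)$ (or along the lower central series filtration), using the invertibility of $\ad x$ on $\n$ --- which you justify correctly from $\Ker(\ad x)\cap\n\subseteq\el\cap\n=0$ --- together with the fact that every extra bracket in $\exp(\ad n')$ strictly raises the filtration degree. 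This is more elementary and constructive, avoiding a nontrivial general theorem at the cost of some bookkeeping; the paper's proof is shorter and reuses a standard fact that it cites anyway. One caveat: the ``slicker'' alternative you sketch at the end is not rigorous as stated, since a dominant morphism between affine spaces of equal dimension with invertible differential at a point need not be surjective (e.g.\ $(u,v)\mapsto(u,uv)$ misses $(0,c)$ for $c\ne 0$); but as you commit to the filtration argument for the write-up, this does not affect the validity of your proof.
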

\begin{proof}
Clearly, $(\Ad\,N)x\subset t+\n$ for all $x\in \z(\el)$. If $x\in \z(\el)_{reg}$, then
$\dim (\Ad\,N)x=\dim \n$. It is also known that the orbits of a unipotent group acting on an 
affine variety are closed, see e.g. \cite[p.\,35]{steinb}. Hence $(\Ad\,N)x=x+\n$.
\end{proof}

\begin{prop}  \label{prop:P-L}
For any parabolic subgroup $P$ with a Levi subgroup $L$, we have 
$\bbk[\p]^P\simeq \bbk[\el]^L$. More precisely, the isomorphism $\p\md P\simeq \el\md L$ is 
induced by the projection $\p\to \p/\n\simeq \el$.
\end{prop}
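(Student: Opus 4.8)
The plan is to apply Lemma~\ref{lm:igusa} with $A=P$, $X=\p$ (acting by the adjoint action), and $S=\bbk[\el]^L$ embedded into $\bbk[\p]^P$ via the projection $\varpi:\p\to\p/\n\simeq\el$. First I would check that $\varpi$ is indeed $P$-equivariant: since $\n$ is an ideal of $\p$, the projection $\p\to\el$ intertwines the adjoint action of $P$ on $\p$ with the adjoint action of $L$ on $\el$ (the unipotent radical acts trivially on $\el=\p/\n$), so pulling back an $L$-invariant on $\el$ yields a $P$-invariant on $\p$. Hence $S:=\varpi^*(\bbk[\el]^L)$ is a well-defined subalgebra of $\bbk[\p]^P$, and $Y:=\spe S\simeq \el\md L$. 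This already gives the coarse isomorphism $\p\md P\simeq\el\md L$ at the level of the candidate subalgebra; the content of the proposition is that $S$ exhausts all of $\bbk[\p]^P$.

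Next I would verify the four hypotheses of Lemma~\ref{lm:igusa}. For (i): $Y=\el\md L$ is normal because $\bbk[\el]^L$ is a polynomial ring (Chevalley) — in fact $Y\simeq\bbk^l$. For (iii): the generic $P$-orbit on $\p$ has dimension $\dim\p-l$, since the generic stabiliser of $P$ on $\p$ is conjugate to $\z(\el)$ and $\dim\z(\el)=l-\ell(\el)_{\mathrm{ss}}$... more cleanly, $\dim\p-\dim Y=\dim\p-l$, and one shows a regular element of $\z(\el)\subset\el\subset\p$ has a $P$-stabiliser of dimension exactly $\dim\p-(\dim\p-l)=l$, using that $\g_x=\el$ for $x\in\z(\el)_{reg}$ together with Lemma~\ref{lm:clear1}. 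For (ii) and (iv): I would exhibit a concrete open subset of $\p$ over which the fibres of $\pi:\p\to Y$ are irreducible of the expected dimension. The key geometric input is Lemma~\ref{lm:clear1}: it says that for $x\in\z(\el)_{reg}$ the whole affine space $x+\n$ is a single $\Ad N$-orbit. Pulling this back, the fibre of $\pi$ over a point of $\el\md L$ coming from a regular semisimple element $x$ of $\el$ contains the $P$-saturation of $x+\n$; decomposing $\el$-fibre $\times$ $\n$ and using the surjectivity of $\pi_\el:\el\to\el\md L$ with irreducible generic fibres (Kostant), I would conclude that the generic $\pi$-fibre in $\p$ is $P\cdot(x_{\mathrm{fib}}+\n)$ for $x_{\mathrm{fib}}$ ranging over an $\el\md L$-fibre, which is irreducible. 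Property (iv) — that the image omits only a subset of codimension $\ge 2$ — follows because $\pi$ is dominant (it is a composition of the dominant $\varpi$ with the quotient $\el\to\el\md L$, whose image is all of $\el\md L$), and the generic-fibre description shows the image actually contains all of $Y$ minus the discriminant-type locus, which has codimension $\ge 1$; to get $\ge 2$ I would instead argue directly that $\pi:\p\to Y$ is surjective, since every point of $\el\md L$ is the image of a semisimple element of $\el$, which lies in $\p$.

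The main obstacle, I expect, is hypothesis (ii): proving that the \emph{generic} fibre of $\pi:\p\to\el\md L$ is irreducible, rather than just the fibres over the most regular points. One must be careful that the fibre of $\pi$ over $\bar x\in\el\md L$ is $\varpi^{-1}(\pi_\el^{-1}(\bar x))=\pi_\el^{-1}(\bar x)\oplus\n$ as a \emph{variety} (not just set-theoretically), which is clear since $\varpi$ is a linear projection, so irreducibility of the $\pi$-fibre is equivalent to irreducibility of the $\pi_\el$-fibre in $\el$; and by Kostant's results the generic fibre of $\el\to\el\md L$ is irreducible (it is the closure of a regular $L$-orbit, or for non-reductive-looking $\el$ one reduces to the semisimple part since $\z(\el)$ contributes only affine-space factors). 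So in fact hypothesis (ii) reduces cleanly to the classical statement for the reductive group $L$, and the genuinely new ingredient is only the orbit computation feeding (iii), i.e. Lemma~\ref{lm:clear1} and the identity $\g_x=\el$ on $\z(\el)_{reg}$. Once all four hypotheses are in place, Lemma~\ref{lm:igusa} yields $S=\bbk[\p]^P$, i.e. $\bbk[\p]^P\simeq\bbk[\el]^L$ with the isomorphism induced by $\p\to\p/\n\simeq\el$, as claimed.
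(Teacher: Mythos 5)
Your overall strategy (Igusa's lemma applied directly to the $P$-action on $\p$, with $S=\varpi^*(\bbk[\el]^L)$) can be made to work, but your verification of hypothesis {\sf (iii)} contains a step that fails as stated. You claim that a regular element $x\in\z(\el)_{reg}$ has $P$-stabiliser of dimension exactly $l$, ``using that $\g_x=\el$''. But that very identity gives $\p_x=\p\cap\g_x=\el$, so $\dim P_x=\dim\el$ and $\dim P{\cdot}x=\dim\n$, which is strictly smaller than the required $\dim\p-l$ unless $\p=\be$. (Indeed, Lemma~\ref{lm:clear1} shows $P{\cdot}x\supset x+\n$ has dimension exactly $\dim\n$ for such $x$; the orbit does not move inside $\el$ at all.) The elements of $\z(\el)_{reg}$ are therefore \emph{not} generic for the $P$-action on $\p$, and the generic stabiliser is not conjugate to $\z(\el)$. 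The correct witnesses for {\sf (iii)} are the elements of $\p$ that are regular semisimple in $\g$: for $x\in\te$ with $\g_x=\te$ one has $\p_x=\te$, hence $\dim P{\cdot}x=\dim\p-l$, and such $x$ form a dense subset of $\p$. With that substitution, and granting Kostant's irreducibility of the fibres of $\el\to\el\md L$ for hypothesis {\sf (ii)}, your argument closes.

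For comparison, the paper runs Igusa's lemma only for the unipotent radical: it takes $A=N$, $S=\bbk[\el]$ (the \emph{whole} coordinate ring, not just the $L$-invariants) and $\pi=\tau:\p\to\p/\n\simeq\el$. Then all four hypotheses are essentially free --- $Y=\el$ is an affine space, every fibre is the affine space $x+\n$, $\tau$ is surjective, and the maximal $N$-orbit dimension $\dim\n$ is exactly what Lemma~\ref{lm:clear1} provides on $\z(\el)_{reg}$. This yields $\bbk[\p]^N=\bbk[\el]$, and the proposition follows by taking $L$-invariants of both sides. That two-step route avoids both the generic-stabiliser computation for $P$ and the appeal to irreducibility of the fibres of the adjoint quotient of $\el$; it is also the step where $\z(\el)_{reg}$ genuinely does the work, which may be why you were tempted to reuse it in the wrong place.
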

\begin{proof} The projection $\tau: \p\to \p/\n\simeq \el$ is surjective and $P$-equivariant, and
the $N$-action on $\p/\n$ is trivial. It follows that the comorphism $\tau^{\#}: \bbk[\el]\to
\bbk[\p]$ yields an $L$-equivariant embedding $\bbk[\el]\hookrightarrow \bbk[\p]^N$.
For $x\in\z(\el)_{reg}$, it follows from Lemma~\ref{lm:clear1} that
$\tau^{-1}(x)=(\Ad\,N)x=x+\n$.
In particular, $\max_{x\in\el}\dim (\Ad\,N)x=\dim N$. Since {\sl all\/} the fibres of $\tau$ 
are irreducible,  Lemma~\ref{lm:igusa} applies with $\pi=\tau$, $Y=\p/\n\simeq \el$, etc., and we conclude 
that  $\bbk[\el]= \bbk[\p]^N$.
Hence $\bbk[\p]^P=(\bbk[\p]^N)^L\simeq \bbk[\el]^L$.
\end{proof}

Recall that $Q=P\ltimes N_-^a$ and $\q=\p\ltimes \n_-^a$, and our goal is to describe the  
algebra $\bbk[\q]^Q$. 

\begin{thm}   \label{thm:adj}
We have 
$\bbk[\q]^Q\simeq \bbk[\p]^P\simeq\bbk[\el]^L$. In particular, $\bbk[\q]^Q$ is a graded polynomial 
algebra. Furthermore, the quotient morphism $\pi_\q: \q\to\q\md Q\simeq \bbk^l$ is equidimensional.
\end{thm}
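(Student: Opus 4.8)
The strategy is to apply Lemma~\ref{lm:igusa} to the action of $Q$ on $\q=\p\oplus\n_-$, taking $S$ to be the image in $\bbk[\q]^Q$ of the already-understood algebra $\bbk[\p]^P$. The first step is to exhibit a natural candidate for this subalgebra: the projection $\varrho\colon\q=\p\oplus\n_-\to\p$ along $\n_-$ is a $Q$-equivariant map when $\q$ acts on itself by the adjoint representation and on $\p$ through the quotient $Q\to P$ (this is immediate from~\eqref{eq:adj-action}, since the $\Ad(s)p$-component depends only on $s$ and $p$). Hence $\varrho^{\#}$ embeds $\bbk[\p]^P$ into $\bbk[\q]^Q$, and composing with Proposition~\ref{prop:P-L} identifies $\bbk[\p]^P\simeq\bbk[\el]^L$, a graded polynomial algebra on $l$ generators. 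So I would set $S:=\varrho^{\#}(\bbk[\p]^P)\subset\bbk[\q]^Q$ and $Y:=\q\md P=\spe S\simeq \el\md L\simeq\bbk^l$; property~(i) of Lemma~\ref{lm:igusa} is then automatic since $Y$ is an affine space.

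The substance is in verifying the remaining three hypotheses, which amounts to understanding the fibres of $\pi_\q=\varrho$ composed with $\p\to\el\md L$. Fix $x\in\z(\el)_{reg}$; by Proposition~\ref{prop:P-L} and Lemma~\ref{lm:clear1} its preimage under $\p\to\el\md L$ inside $\p$ is exactly the single $P$-orbit $P{\cdot}x$, and I claim it is $\Ad(P){\cdot}x=x+\n$ (as in Lemma~\ref{lm:clear1}). The fibre $\pi_\q^{-1}$ of the corresponding point of $Y$ is then $\varrho^{-1}(x+\n)=(x+\n)\oplus\n_-$, an irreducible affine space, giving property~(ii). For property~(iii) I need to show that the generic orbit dimension $\max_{\xi\in\q}\dim Q{\cdot}\xi$ equals $\dim\q-\dim Y=\dim\g-l$. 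Here I would compute $\dim Q{\cdot}(x,0)$ for $x\in\z(\el)_{reg}$ directly from~\eqref{eq:adj-action}: the stabiliser of $(x,0)$ in $Q$ consists of $s{\cdot}\exp(\eta)$ with $\Ad(s)x=x$ (so $s\in L$, since $\g_x=\el$) and $s{\centerdot}(x\circ\eta)=0$, i.e.\ $\eta\in\ker(\mathrm{ad}\,x|_{\n_-})=0$ because $x$ is regular in $\el$ and acts invertibly on $\n_-$ via nonzero weights; hence the stabiliser is $L_x=L$ and $\dim Q{\cdot}(x,0)=\dim Q-\dim L=\dim\n+\dim\n_-=\dim\g-l$. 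Since $\dim Y=l$, this is the maximal possible orbit dimension, so~(iii) holds.

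Property~(iv) is the one I expect to be the main obstacle, since it requires controlling the image of $\pi_\q$ near the non-generic locus and bounding the codimension of its complement. The clean way is to observe that the composite $\q\xrightarrow{\varrho}\p\to\el$ is surjective (both maps are surjective linear/quotient maps) and $P$-equivariant, so on the level of quotients $\pi_\q\colon\q\to Y\simeq\el\md L$ is just the quotient morphism $\el\to\el\md L$ precomposed with a surjection; that morphism is surjective for $L$ reductive, hence $\pi_\q$ is surjective and $Y\setminus\Ima(\pi_\q)=\varnothing$, so one may take $\Omega=Y$ and~(iv) is trivially satisfied. With all four hypotheses in place, Lemma~\ref{lm:igusa} gives $S=\bbk[\q]^Q$, i.e.\ $\bbk[\q]^Q\simeq\bbk[\p]^P\simeq\bbk[\el]^L$, which is graded polynomial on $l$ generators. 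Finally, equidimensionality of $\pi_\q$: the generic fibre has dimension $\dim\q-l=\dim\g-l$, and for an \emph{arbitrary} $y\in Y$ corresponding to a closed $L$-orbit $L{\cdot}x\subset\el$ (every fibre of $\el\to\el\md L$ contains a unique closed orbit, that of a semisimple element $x$), the fibre $\pi_\q^{-1}(y)$ contains $\varrho^{-1}$ of the union of $P$-orbits lying over that point; I would check that this fibre always has dimension exactly $\dim\g-l$ by noting $\pi_\q^{-1}(y)=\tau^{-1}(\text{fibre of }\el\to\el\md L)\oplus\n_-$ where $\tau\colon\p\to\el$, and the fibres of $\el\to\el\md L$ are equidimensional of dimension $\dim\el-l$ (a standard fact for the adjoint quotient of a reductive Lie algebra), while $\tau$ has all fibres of dimension $\dim\n$; adding $\dim\n_-$ gives $\dim\el-l+\dim\n+\dim\n_-=\dim\g-l$ uniformly. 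Hence $\pi_\q$ is equidimensional, completing the proof.
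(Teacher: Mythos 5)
Your overall strategy (Igusa's lemma plus the identification $Y\simeq\el\md L$) is in the spirit of the paper, but you apply Lemma~\ref{lm:igusa} in one shot to the full group $Q$, whereas the paper applies it only to unipotent groups: first to $N_-^a$ acting on $\q$ (giving $\bbk[\q]^{N_-^a}=\bbk[\p]$, hence $\bbk[\q]^Q=\bbk[\p]^P$), and then, inside Proposition~\ref{prop:P-L}, to $N$ acting on $\p$, with the reductive part $L$ handled by classical invariant theory. This two-step route is what lets the paper get away with computing only orbits of the unipotent pieces, where $(\Ad_Q N_-^a)(x)=x+\n_-^a$ for $x\in\z(\el)_{reg}$ immediately gives the maximal orbit dimension.

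Your one-step version has a genuine gap in hypothesis {\sf (iii)}. You must show $\max_{\xi\in\q}\dim Q{\cdot}\xi=\dim\q-\dim Y=\dim\g-l$, and your witness is $(x,0)$ with $x\in\z(\el)_{reg}$. From \eqref{eq:adj-action} its orbit is exactly $(x+\n)\oplus\n_-$, of dimension $\dim\n+\dim\n_-=\dim\g-\dim\el$; your final equality ``$\dim\n+\dim\n_-=\dim\g-l$'' is false unless $\dim\el=l$, i.e.\ unless $\p=\be$. So for a general parabolic the orbit you exhibit is \emph{not} of maximal dimension and {\sf (iii)} is not verified. (It can be repaired: take $p=x+e'$ with $e'$ a regular nilpotent element of $\el$; then $p$ is regular in $\g$ with $\g_p\subset\el$, one checks $(\ad p)^{-1}(\p)=\p$ so that $\eta\mapsto p\circ\eta$ is injective on $\n_-$, and the stabiliser of $(p,0)$ has dimension $l$.) A secondary inaccuracy occurs in your check of {\sf (ii)}: the preimage in $\p$ of the point of $\el\md L$ determined by $x\in\z(\el)_{reg}$ is not the single orbit $x+\n$ but $x+\eus N(\el)+\n$ ($\eus N(\el)$ the nilpotent cone of $\el$), and such points are not generic in $Y$ when $\el\ne\te$; the statement you need is that generic fibres of $\el\to\el\md L$ are single closed $L$-orbits of regular semisimple elements, which does yield irreducibility. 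Your surjectivity argument for {\sf (iv)} and the concluding equidimensionality computation (composing $\tau$ and the Kostant-equidimensional $\pi_\el$) agree with the paper and are fine.
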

\begin{proof}
1) \ In view of Proposition~\ref{prop:P-L}, we have to prove that $\bbk[\q]^Q\simeq \bbk[\p]^P$.
Since $\bbk[\q]^Q=(\bbk[\q]^{N_-^a})^P$, the assertion will follow from the fact that 
\beq  \label{eq:isom-Na}
     \bbk[\q]^{N_-^a}\simeq \bbk[\p]
\eeq
and this isomorphism is compatible with the $P$-actions.
To  this end, consider the sur\-jec\-tive $Q$-equi\-va\-ri\-ant projection
$\pi: \q\to \q/\n_-^a \simeq \p$. By~Eq.~\eqref{eq:adj-action},  the subgroup 
$N_-^a\subset Q$ acts trivially on $\q/\n_-^a$. 
It follows that the comorphism $\pi^{\#}$ yields a $P$-equivariant 
embedding $\bbk[\p]\subset \bbk[\q]^{N_-^a}$. 
Again, to see that this is an equality, we use Lemma~\ref{lm:igusa}. 
If $x\in\z(\el)_{reg}\subset\p$ and $\eta\in \n_-^a$, then $x\circ \eta\in \n_-^a$ and 
$x\circ \eta=0$ if and only if $\eta=0$. Therefore, 
$(\Ad_Q N_-^a)(x)=x+\n_-^a$ (use Eq.~\eqref{eq:adj-action} with $s=1$, $\eta'=0$, and $p=x$). This implies that
   $\displaystyle \max_{\xi\in\q}\dim N_-^a{\cdot}\xi=\dim N$.
As all the fibres of $\pi$ are irreducible, 
Lemma~\ref{lm:igusa} applies here, and we conclude that Eq.~\eqref{eq:isom-Na} holds.

2) \ Gathering all previous descriptions, we see that $\pi_\q$ is the composition
\[
    \q=\p\ltimes \n_-^a \to \p\to \p/\n\simeq \el\to \el\md L .
\]
Since $\pi_\el: \el\to \el\md L$ is equidimensional \cite{ko63}, the equidimensionality of $\pi_\q$ 
follows.
\end{proof}

Comparing with the adjoint representation of $G$, we see that the algebra of 
$Q$-invariants remains polynomial, but the degrees of basic invariants somehow decrease.
This certainly means that here
$\eus L_\bullet(\bbk[\g]^G)\subsetneqq \bbk[\q]^Q=\bbk[\el]^L$ unless $\el=\g$.

\section{On invariants of the coadjoint representation of $Q$} 
\label{sect3}

\noindent
In this section, we present some general properties of the invariants of the coadjoint representation 
of $Q$. This will be a base for the explicit results on $\eus S(\q)^Q$ 
presented in Sections~\ref{sect4} and~\ref{sect5}.
The coadjoint representation is much more interesting than the adjoint one since 
$\bbk[\q^*]=\eus S(\q)$ is a Poisson algebra, $\eus S(\q)^Q$ is the centre of this Poisson algebra, 
and $\eus S(\q)$ is related to the enveloping algebra of 
$\q$ via the Poincar\'e-Birkhoff-Witt theorem. However, the coadjoint representation is also
much more complicated, and we cannot describe $\eus S(\q)^Q$ for all parabolic contraction.

Recall that $\q$ is isomorphic to $\p\oplus \g/\p \simeq \p\oplus \n_-$ as a vector space and 
a $P$-module. Then  $\q^*$ is identified with the direct sum of $P$-modules 
$\n\oplus\p_-$, where $\p_-\simeq \p^*$.

By a seminal result of R.W.\,Richardson~\cite{rwr74}, $P$ has a dense orbit in $\n$; in other words, 
there exists $e\in\n$ such that $[\p,e]=\n$. Then $\p$ is called a {\it polarisation\/} of $e$.
The nilpotent elements of $\g$ occurring as representatives 
of dense $P$-orbits in $\n=\p^{nil}$ for some $P$ (and the corresponding $G$-orbits)
are said to be {\it Richardson} or {\it polarisable}.  
If $\p=\el\oplus\n$ is a polarisation of $e$, then 
\[
    \dim G{\cdot}e=2\dim\n=\dim G/L  \ \text{ and } \ \p_e=\g_e .
\]
However, it can happen that $P_e \subsetneqq G_e$, and in such cases $G_e$ is necessarily 
disconnected. The orbit $G{\cdot}e$ depends only on the  Levi subalgebra of $\p$. That is, if  
$\p'=\el\oplus\n'$ is another parabolic subalgebra with the same Levi subalgebra $\el$ and $e'\in\n'$ is Richardson, then $G{\cdot}e=G{\cdot}e'$.

If $\g$ is of type $\GR{A}{l}$, then all nilpotent elements are Richardson, otherwise this is not 
always the case. We refer to  \cite{ke83} for an explicit description of the Richardson
elements and their polarisations for all classical Lie algebras. 

For an algebraic group $A$ with Lie algebra $\ah$,
the {\it index} of  $\ah$, $\ind\ah$, is defined as the minimal codimension of an $A$-orbit 
in the coadjoint representation. By the Rosenlicht theorem~\cite[1.6]{brion}, one also has  
$\ind\ah=\trdeg \bbk(\ah^*)^A$. The index of a reductive Lie algebra equals the rank.
It is easily seen that the index cannot decrease under 
contractions, 
hence $\ind\q\ge \ind\g=\rk\g$. 
 
\begin{thm}    \label{thm:ind-q}
We have\/ $\ind\q=\rk\g$ \ for any parabolic contraction.
\end{thm}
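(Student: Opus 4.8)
The plan is to show the reverse inequality $\ind\q\le\rk\g$ by exhibiting a coadjoint orbit of $Q$ in $\q^*=\n\oplus\p_-$ whose codimension is exactly $\rk\g=l$. The natural candidate is an orbit through a point $\xi=(e,\gamma)$ where $e\in\n$ is a \emph{Richardson} element attached to $\p$ (so that $[\p,e]=\n$, which exists by Richardson's theorem), and $\gamma\in\p_-\simeq\p^*$ is a suitably generic element. First I would write down the coadjoint action of $Q=P\ltimes N_-^a$ on $\q^*$ explicitly from the bracket formula \eqref{eq:skobka}, splitting the tangent space to the orbit at $\xi$ into the contribution of $\n_-^a$ (the abelian ideal) and of $\p$. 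The key point is that the $\n_-^a$-direction already moves $\xi$ along $\{e\}\times\p_-$ by the map $\eta\mapsto -e\circ\eta$ composed with the appropriate projection; since $e$ is Richardson this map has large rank, and the $\p$-direction, acting on the first coordinate, sweeps out $[\p,e]=\n$. Carefully combining these two contributions should give $\dim Q{\cdot}\xi\ge \dim\n+\dim\n_-+(\dim\p-\dim\g_e)=\dim\q-l$, using $\dim\g_e=\dim\p_e$ and $\dim\p=\dim\n+\rk\g+\dim[\el,\el]/2\cdots$ — more cleanly, $\dim\p-\dim\p_e=\dim\p-\dim\g_e$ and $\dim\g_e=\dim\g-2\dim\n$, so the count returns exactly $l$.

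An alternative, and probably cleaner, route is to use the isospectrality/limit principle already set up in Section~\ref{sect1}: $\q$ is the In\"on\"u--Wigner contraction $\lim_{t\to 0}\g_{(t)}$, and by Lemma~\ref{lm:poincare} the algebra of highest components $\eus L^\bullet(\bbk[\g^*]^G)\subset\bbk[\q^*]^Q$ has the same Poincar\'e series as $\bbk[\g^*]^G$. In particular $\eus L^\bullet(\bbk[\g^*]^G)$ contains the $l$ highest components $\cF_1^\bullet,\dots,\cF_l^\bullet$ of the basic invariants $\cF_1,\dots,\cF_l\in\eus S(\g)^G$. If one shows these are algebraically independent — equivalently, that their differentials are linearly independent at some point of $\q^*$ — then $\trdeg\bbk(\q^*)^Q\ge l$, hence $\ind\q\ge l$; combined with the general inequality $\ind\q\ge\rk\g$ being an equality would require the matching upper bound, which is really the content. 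So the honest approach is the direct orbit-dimension computation above, and the hard part will be verifying that the generic point $\gamma\in\p_-$ can be chosen so that the $\p$-stabiliser of $\xi=(e,\gamma)$ has dimension exactly $l$ rather than something larger: one must rule out that the constraint "$p$ fixes $e$ in $\n$ and fixes $\gamma$ modulo the $\n_-$-image" cuts out a subspace bigger than an $l$-dimensional one.

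To handle that obstacle I would argue as follows. The stabiliser $\q_\xi$ consists of pairs $(p,\eta)$ with $[p,e]$ projecting to $0$ in $\n$ — i.e. $p\in\p_e=\g_e$ — together with a condition coupling $\eta$ and the $\p_-$-component of the action of $p$ on $\gamma$; since $e$ is Richardson the map $\n_-\to\p_-$, $\eta\mapsto\mathsf{pr}_-([e,\eta])$ has image of dimension $\dim\n_--\dim(\ker)$, and a dimension count using $\dim\g_e=\dim\g-2\dim\n$ shows the coupled conditions leave precisely an $l$-dimensional family of solutions \emph{provided} the residual pairing of $\g_e$ on the $\p_-$-part of $\gamma$ is as nondegenerate as possible. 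This last nondegeneracy is exactly the statement that a \emph{generic} $\gamma$ makes the bilinear form $(p,\eta)\mapsto\langle\gamma,[p,\cdot]\rangle + \langle e,[\eta,\cdot]\rangle$ on $\g_e\oplus\n_-$ have the expected rank; since this rank is a lower-semicontinuous function of $\gamma$ and the expected rank is achieved generically over $\bbk$ (one can test on the special point $\gamma$ dual to a Slodowy-slice-type complement, or invoke that $\dim\q_\xi\ge\ind\q\ge l$ forces the generic value to be exactly $l$ once one exhibits a single $\xi$ with $\dim\q_\xi\le l$), we conclude $\ind\q=l=\rk\g$.
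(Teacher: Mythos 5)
Your reduction is in substance the right one, and it is essentially how the paper proceeds: the paper simply invokes Ra\"is' formula for the index of a semi-direct product with an abelian ideal, $\ind\q=\ind\p_\xi+(\dim\n-\max_{x\in\n}\dim P{\cdot}x)$, notes that the second term vanishes because $P$ has a dense (Richardson) orbit in $\n$, and concludes $\ind\q=\ind\p_e=\ind\g_e$. Your hand computation of the stabiliser of $\xi=(e,\gamma)$ — namely that $(p,\eta)\in\q_\xi$ forces $p\in\p_e=\g_e$, that the map $\eta\mapsto\langle e,(\cdot)\circ\eta\rangle$ is injective with image exactly the annihilator of $\g_e$ in $\p^*$, so that $\dim\q_\xi=\dim(\g_e)_{\gamma\vert_{\g_e}}$ — is a correct re-derivation of that formula in this special case, so up to this point you are fine.

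The genuine gap is the last step. After the reduction you still need $\ind\g_e=\rk\g$ for every Richardson element $e$, and this is exactly where you wave your hands: you say the nondegeneracy holds ``provided the residual pairing \dots is as nondegenerate as possible'' and propose to ``exhibit a single $\xi$ with $\dim\q_\xi\le l$'' by genericity or by testing on a Slodowy-slice-type point. Semicontinuity of rank only tells you that the generic value of $\dim(\g_e)_{\bar\gamma}$ is the minimal one, i.e.\ it \emph{defines} $\ind\g_e$; it gives no upper bound on that minimum. The statement $\ind\g_e=\rk\g$ is the Elashvili conjecture for Richardson elements — for which, as the paper remarks, no elementary or conceptual proof is known — and the paper imports it from Charbonnel--Moreau \cite{charb-mor} (with \cite{dd03} covering the subregular case). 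Without citing or proving that input, your argument establishes only $\ind\q=\ind\g_e$, not $\ind\q=\rk\g$. (Your ``alternative route'' via $\cF_1^\bullet,\dots,\cF_l^\bullet$ is correctly self-diagnosed as giving only the inequality $\ind\q\ge l$, which is already known from the general behaviour of the index under contraction.)
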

\begin{proof}
As $\q=\p\ltimes\n_-^a$ is a semi-direct product with an {\sl abelian\/} ideal $\n_-^a$, one can 
use Ra\"is' formula~\cite{rais} for computing $\ind\q$. Namely, 
\[
   \ind \q=\ind \p_\xi +(\dim\n - \max_{x\in\n} \dim P{\cdot}x) ,
\]
where $\n$ occurs as the dual $P$-module to $\n_-^a\simeq \g/\p$ and $\xi\in\n$ is a generic element. 
Because of the existence of Richardson elements in $\n$, 
Ra\"is' formula boils down to the equality $\ind \q=\ind \p_e=\ind\g_e$.
For all Richardson elements,  
the equality $\ind\g_e=\rk\g$ is proved in~\cite{charb-mor} (see also \cite[Cor.\,3.4]{dd03} for
the case of subregular elements).
\end{proof}

\begin{rmk}
The assertion that ``$\ind\g_e=\rk\g$ for all nilpotent elements of $\g$'' is known
as {\it the Elashvili conjecture}, see \cite[Sect.\,3]{dd03}.  Despite the extreme simplicity of the 
statement, there is no general conceptual proof as yet.
However, partial results of several authors (Yakimova, Charbonnel \& Moreau) and computer computations (de Graaf) together provide an affirmative answer to the Elashvili conjecture. 
A historic outline can be found in~\cite{charb-mor}.
\end{rmk}
 
\subsection{A sufficient condition for algebraic independence.} 
Let $\cF_1,\ldots, \cF_l$ be any set of basic invariants in $\bbk[\g]^G$.
A classical result of Kostant asserts that, for $x\in\g$, we have
$\dim \g_x=l$ if and only if the differentials 
${\textsl d}_x \cF_1,\dots,{\textsl d}_x \cF_l$ are linearly independent, see \cite[Theorem 9]{ko63}. 
As the elements $x$ of $\g$ with $\dim\g_x=\rk\g=l$ are said to be regular, this assertion is sometimes called {\it Kostant's regularity criterion}.
Having identified $\bbk[\g]$ and $\eus S(\g)$, one can rewrite this property in terms 
of $\dim\g_\xi$ ($\xi\in\g^*$) and $\{ {\textsl d}_\xi \cF_i\}_{i=1}^l$ for $\cF_1,\ldots, \cF_l\in \eus S(\g)^G$.
There is also a more fancy way to express this criterion.

\begin{df}[cf.\,{\cite[Definition\,2.2]{kys-1param}}]
Let $\ah$ be an algebraic Lie algebra  with $\ind\ah=l$
and $\cF_1,\ldots, \cF_l$ algebraically independent 
elements of $\eus S(\ah)^{\ah}$. Suppose that 
${\textsl d}_\gamma \cF_1,\dots,{\textsl d}_\gamma \cF_l$ are linearly independent if and only if
$\dim\ah_\gamma=\ind \ah$ ($\gamma\in\ah^*$).
Then we say that the polynomials $\cF_i$ ($1\le i\le l$) satisfy {\it the Kostant  equality} (in $\ah$).
\end{df}

Recall from Section~\ref{sect1} that we have defined a linear
operator $\mathsf c_t:\g\to\g$ ($t\in\bbk^\times$) and a family of Lie algebras  $\g_{(t)}$ such that 
$\lim_{t\to 0}\g_{(t)}=\q$. Having extended $\mathsf c_t$ to $\eus S(\g)$ in the obvious way, 
one can regard $\mathsf c_t(\cH)$ as an element of $\eus S(\g)[t]$ for any $\cH\in\eus S(\g)$. 
Then define $\deg_t \cH$ to be the usual degree in $t$ of $\mathsf c_t(\cH)$.
If $\deg_t \cH=d$, then the limit $\lim_{t\to 0} t^d \mathsf c_{t^{-1}}(\cH)$ exists and is a nonzero
element of $\eus S(\g)$. If $\cH$ is homogeneous, then
it follows immediately from the definition of $\mathsf c_t$ that 
$\lim_{t\to 0} t^d \mathsf c_{t^{-1}}(\cH)=\cH^\bullet$ and 
$\deg_t \cH=\deg_{\n_-}(\cH^\bullet)$.  See also \cite[Sect.\,3]{kys-1param} for a more general setup. 

Since $\ind\q=\rk\g$ (Theorem~\ref{thm:ind-q}), the theory developed in~\cite{kys-1param} yields 
the following statement in the setting of the parabolic contractions:

\begin{thm}[{\cite[Theorem~3.8]{kys-1param}}]   \label{t-degrees}
If $\cF_1,\ldots,\cF_l\in\eus S(\g)^{G}$ are the basic invariants, then 
$\sum_{i=1}^{l}\deg_{\n_-}(\cF_i^\bullet) \ge \dim\n$. Moreover, 
the equality holds if and only if $\cF_1^\bullet,\dots,\cF_l^\bullet$ are algebraically independent,
and in this case the polynomials $\{\cF_i^\bullet\}_{i=1}^l$
also satisfy the Kostant equality in $\q$.
\end{thm}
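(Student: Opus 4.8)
The plan is to transfer the statement into the one-parameter family $\g_{(t)}$ of Section~\ref{sect1} (so $\g_{(1)}=\g$ and $\lim_{t\to 0}\g_{(t)}=\q$) and to argue with degrees in $t$. Set $d_i:=\deg_t\cF_i=\deg_{\n_-}(\cF_i^\bullet)$ and $D:=\sum_i d_i$. The first, formal, ingredient is that the differential ${\textsl d}\colon\eus S(\g)\to\eus S(\g)\otimes\g$ preserves $\mathsf c_t$-weights: differentiating along $\p$ lowers the $\eus S$-degree but inserts a $\p$-vector (weight $0$), while differentiating along $\n_-$ removes one $\n_-$-unit but inserts an $\n_-$-vector (weight $1$); hence $({\textsl d}\cH)^\bullet={\textsl d}(\cH^\bullet)$ for homogeneous $\cH$. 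Apply this to the fundamental covariant $\mathcal J:={\textsl d}\cF_1\wedge\dots\wedge{\textsl d}\cF_l\colon\g^*\to\Lambda^l\g$: the weight-$D$ component of $\mathsf c_t(\mathcal J)$ can only arise by wedging the top-weight parts of the ${\textsl d}\cF_i$, so it equals $t^D\cdot{\textsl d}\cF_1^\bullet\wedge\dots\wedge{\textsl d}\cF_l^\bullet$. By the Jacobian criterion this wedge is non-zero (as a polynomial map on $\q^*$) exactly when $\cF_1^\bullet,\dots,\cF_l^\bullet$ are algebraically independent. Therefore
\[
  D\ge\deg_t\mathcal J,\qquad\text{with equality iff }\cF_1^\bullet,\dots,\cF_l^\bullet\text{ are algebraically independent,}
\]
and in the equality case $\mathcal J^\bullet={\textsl d}\cF_1^\bullet\wedge\dots\wedge{\textsl d}\cF_l^\bullet$.

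It remains to prove the identity $\deg_t\mathcal J=\dim\n$. The bound $\deg_t\mathcal J\le D$ is already recorded. For $\deg_t\mathcal J\ge\dim\n$, note that for $t\ne 0$ the map $\mathsf c_{t^{-1}}\colon\g\to\g_{(t)}$ is a Lie algebra isomorphism, so, up to the $\mathsf c_t$-twist on the $\Lambda^l\g$-factor, $\mathcal J$ is the fundamental covariant of the semisimple algebra $\g_{(t)}\simeq\g$; thus for $t\ne 0$ it vanishes at $\xi$ precisely when $\dim(\g_{(t)})_\xi>\rk\g$ and otherwise spans the line $\Lambda^l(\g_{(t)})_\xi$. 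Now restrict to the annihilator $\p^\perp\subset\g^*$, whose generic point is, via $\varkappa$, a Richardson element $e\in\n$; since $\dim G{\cdot}e=2\dim\n$, this $e$ is \emph{not} regular when $\p\ne\be$, so $\mathcal J$ vanishes identically along $\p^\perp$. Keeping track of the order of this vanishing against the $\mathsf c_t$-reweighting — using Ra\"is' formula for $\ind\q$, Theorem~\ref{thm:ind-q} ($\ind\q=\rk\g$), and the Elashvili-type equality $\ind\g_e=\rk\g$ with $\g_e=\p_e$, which together give $\dim\q_\gamma=\rk\g$ and $\dim N_-^a{\cdot}\gamma=\dim\n$ for generic $\gamma\in\q^*$ — one gets $\deg_t\mathcal J=\dim\n$ and that the top component $\mathcal J^\bullet$ vanishes exactly on $\{\gamma\in\q^*:\dim\q_\gamma>\rk\g\}$. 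This yields $\sum_i\deg_{\n_-}(\cF_i^\bullet)=D\ge\dim\n$, with equality iff the $\cF_i^\bullet$ are algebraically independent.

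For the final clause, assume algebraic independence, so that $\mathcal J^\bullet={\textsl d}\cF_1^\bullet\wedge\dots\wedge{\textsl d}\cF_l^\bullet$ is the fundamental covariant of $\q$ and, by the previous paragraph, vanishes at $\gamma$ exactly when $\dim\q_\gamma>\rk\g$. Since $\cF_i^\bullet\in\eus S(\q)^Q$ (Theorem~\ref{thm:coadj-inonu}), their differentials at any $\gamma$ lie in $\q_\gamma$; hence ${\textsl d}_\gamma\cF_1^\bullet,\dots,{\textsl d}_\gamma\cF_l^\bullet$ are linearly independent iff $\mathcal J^\bullet(\gamma)\ne0$ iff $\dim\q_\gamma=\rk\g=\ind\q$, which is precisely the Kostant equality in $\q$.

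The main obstacle is the middle paragraph, i.e.\ the equality $\deg_t\mathcal J=\dim\n$: it amounts to controlling exactly how the singular set $\{\xi:\dim\g_\xi>\rk\g\}$ of $\g$ degenerates, under $\mathsf c_t$, onto the singular set in $\q^*$, with no loss of $\mathsf c_t$-degree along the way. This is the technical core of the one-parameter-contraction machinery of~\cite{kys-1param}, and it is the one place where the index identity of Theorem~\ref{thm:ind-q} (equivalently, $\ind\g_e=\rk\g$ for Richardson $e$) is genuinely used; the remaining steps are formal.
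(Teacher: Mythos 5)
The paper does not prove this statement at all: it is imported verbatim from \cite[Theorem~3.8]{kys-1param}, so there is no internal argument to compare yours with. Judged on its own, your proposal has the right skeleton but a genuine gap at the one place where the theorem has content. Your first paragraph is fine: the differential preserves the $\mathsf c_t$-weight once the $\g$-factor of $\eus S(\g)\otimes\g$ is weighted, the top-weight component of $\mathcal J={\textsl d}\cF_1\wedge\dots\wedge{\textsl d}\cF_l$ is ${\textsl d}\cF_1^\bullet\wedge\dots\wedge{\textsl d}\cF_l^\bullet$ when that wedge is nonzero, and the Jacobian criterion converts its (non)vanishing into algebraic (in)dependence, giving $D\ge\deg_t\mathcal J$ with equality iff the $\cF_i^\bullet$ are independent. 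Your last paragraph is likewise a routine consequence. Everything therefore hinges on the middle paragraph, i.e.\ on the identity $\deg_t\mathcal J=\dim\n$ --- and there you do not give an argument. ``Keeping track of the order of this vanishing against the $\mathsf c_t$-reweighting \dots one gets $\deg_t\mathcal J=\dim\n$'' is an assertion, not a derivation, and you concede as much in your closing remarks. Since both the inequality $D\ge\dim\n$ and the Kostant equality for the $\cF_i^\bullet$ are deduced from this unproved identity, the proposal reduces the theorem to its own hardest part rather than proving it.

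Moreover, the sketch you offer for that identity looks at the wrong subspace. The quantity $\deg_t\mathcal J$ is the top weight for the bi-grading \eqref{eq:nomer1}, which measures behaviour of $\mathcal J$ near the $Q$-stable summand $\p_-$ of $\q^*$ (a generic point of $\p_-$ is \emph{regular} in $\g$, so $\mathcal J$ does not vanish there). The vanishing of $\mathcal J$ along $\p^\perp\simeq\n$ that you invoke --- because a Richardson element is non-regular for $\p\ne\be$ --- is governed by the \emph{other} bi-grading, the one denoted \eqref{eq:nomer2} in Section~\ref{sect1}, and the paper explicitly warns that these two gradings yield different ``highest components'' living in different algebras. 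The order of vanishing of $\mathcal J$ along $\n$ controls $\min\deg_\p$ of the $\eus S$-part of $\mathcal J$, not $\max(\deg_{\n_-}$ of the $\eus S$-part plus the $\Lambda^l\g$-weight$)$, and no bookkeeping of the kind you gesture at converts one into the other. The actual proof in \cite{kys-1param} controls the renormalised limit of $\mathrm{span}\{{\textsl d}_{\gamma}\cF_i\}$ as one degenerates $\g_{(t)}$ to $\q$ at a generic $\gamma\in\q^*$, using $\ind\q=\ind\g$ (Theorem~\ref{thm:ind-q}) and a codimension condition on the singular set of $\g^*$; that analysis is precisely what is missing here. To repair the proposal you would either have to carry out that limit argument or simply cite \cite[Theorem~3.8]{kys-1param}, as the paper does.
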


In Sections~\ref{sect4} and~\ref{sect5}, we consider certain parabolic contractions, where we are able 
to prove that  $\cF_1^\bullet,\dots,\cF_l^\bullet$ are algebraically independent. And this will finally lead us to the conclusion that $\eus S(\q)^Q=\bbk[\cF_1^\bullet,\dots,\cF_l^\bullet]=\eus L^\bullet(\eus S(\g)^G)$ is a polynomial algebra.
However, it can happen that $\cF_1^\bullet,\dots,\cF_l^\bullet$ are algebraically dependent for any choice
of the basic invariants $\cF_1,\ldots,\cF_l$ (see Remark~\ref{rmk:reason} below).

\subsection{Symmetric invariants of centralisers and contractions}
Here we explain an astonishing relationship between invariants of $(\q,\ads)$ with 
$\q=\p\ltimes\n_-^a$ and 
invariants of $(\g_e,\ads)$, where $e\in\g$ is Richardson with polarisation $\p$.

Let $e\in \g$ be a nonzero nilpotent element and
$\{e,h,f\}$ be an $\tri$-triple in $\g$ (i.e., $[e,f]=h$, $[h,e]=2e$, $[h,f]=-2f$). 
Then $\varkappa(e,f)=1$.
If $V\subset\g$ is the orthogonal complement of 
$e$ with respect to  $\varkappa$, then $\g=\bbk f\oplus V$ and $\g_e\subset V$.

\begin{lm}[\protect {\cite[Lemma\,A.1]{ppy}}]    \label{f-ort}
Let  $\cH\in\eus S(\g)^G$ be homogeneous, of degree $m$.  Consider the decomposition 
$\cH=\sum_{i\in\BN} f^{m-i}\cH_{i}$, 
where $\cH_i\in\eus S^{i}(V)$.  If $f^{m-k}\cH_{k}$ is the nonzero summand with minimal $k$,
then $\cH_k\in\eus S^k(\g_e)\subset \eus S^k(V)$. 
Moreover, this $\cH_k$ is $G_e$-invariant.
\end{lm}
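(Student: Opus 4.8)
The plan is to grade the $G$-invariant $\cH$ according to the $\ad h$-eigenspace decomposition and track the terms of lowest $\ad h$-weight. First I would recall the basic setup of the Kostant section / Slodowy slice: decompose $\g=\bbk f\oplus V$ with $V=(\bbk e)^{\perp}=[\g,e]\oplus\g_e$ (the last equality coming from the $\tri$-theory, since $V$ is the sum of all $\ad h$-eigenspaces other than the lowest one in each irreducible $\tri$-submodule, while $\bbk f$ picks out one lowest vector). The key point is that $\g_e$ is spanned by highest-weight vectors of the $\tri$-submodules of $\g$, hence sits inside $V$, and $f$ spans the remaining lowest-weight line. Writing $\cH=\sum_i f^{m-i}\cH_i$ with $\cH_i\in\eus S^i(V)$ is then just the expansion of $\cH$ in powers of the coordinate dual to $f$; the claim is that the lowest such term has its $\eus S^k(V)$-factor already living in $\eus S^k(\g_e)$, and is $G_e$-invariant.

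The main tool is the invariance of $\cH$ under the one-parameter subgroup $\gamma(t)=\exp(t\,\ad f)$ combined with the $\bbk^\times$-action coming from the $\tri$-triple, as in \cite{ppy,ko63}. Concretely, I would consider the action of $\exp(\tau\,\ad e)$ (or dually translation along $e$): since $[\p,e]=\n$ is not directly what is needed here, the cleaner route is to use the grading element $h$. Assign to $f$ the weight $-2$ under $\ad h$ and to each eigenvector in $V$ its $\ad h$-weight, which is $\ge -1$; then $\cH$, being $G$-invariant, is in particular invariant under the torus generated by $h$, so each bihomogeneous piece $f^{m-i}\cH_i$ with respect to the $f$-degree further decomposes, and the "differentiate $\cH$ along $e$ and set the generic slice point" argument of Kostant forces the minimal-$k$ term $\cH_k$, restricted to the slice $f+\g_e$, to coincide with the restriction of $\cH$ itself. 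The decisive computation is: expanding $\cH(f+v)$ for $v\in V$ and using $G$-invariance under $\exp(\ad(\text{lower-triangular part}))$ shows that $\cH_k$ must be annihilated by $\ad e$ on the symmetric-algebra level, i.e. $\cH_k\in\eus S(\g)^{\ad e}$, and intersecting with $\eus S(V)$ gives $\cH_k\in\eus S^k(\g_e)$.

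For the $G_e$-invariance I would argue as follows. The reductive part of $G_e$ (equivalently, of $\g_e$) is generated by $h$ together with the centraliser $\g_e\cap\g_f=\g_e\cap\g_h$, and all of these act on $\eus S(\g)$ preserving the $f$-degree filtration (since $[\g_e,f]\subset V$ and $[\g_e,\g_e]\subset\g_e$); hence they act on the associated graded, and on the minimal component $\cH_k$ the induced action is just the restriction of the $G_e$-action on $\eus S(\g_e)$. Since $\cH$ is $G$-invariant, in particular $\ad x\cdot\cH=0$ for $x\in\g_e$, and passing to the lowest $f$-degree component yields $\ad x\cdot\cH_k=0$, i.e. $\cH_k\in\eus S(\g_e)^{\g_e}$; exponentiating over the connected group $G_e^\circ$ and checking invariance under the component group (which permutes the $\tri$-submodules compatibly) gives full $G_e$-invariance. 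I expect the main obstacle to be the bookkeeping in the first computation — keeping the $f$-degree, the $\ad h$-weight, and the symmetric-power degree $i$ straight simultaneously, and verifying rigorously that no cancellation lets a non-$\g_e$ term survive in the minimal component — rather than any conceptual difficulty; this is exactly the content imported from \cite[Appendix]{ppy}, so I would lean on that reference for the technical core.
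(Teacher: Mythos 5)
The paper itself offers no proof of this lemma---it is imported verbatim from \cite[Lemma\,A.1]{ppy}---and your proposal likewise defers ``the technical core'' to that appendix, so the real question is whether your sketch would close the argument on its own. It would not, because the decisive step is invalid as stated. You claim that $\cH_k$ is annihilated by $\ad e$ (acting as a derivation of $\eus S(\g)$) and that intersecting $\eus S(\g)^{\ad e}$ with $\eus S(V)$ yields $\eus S^k(\g_e)$. But the kernel of the derivation induced by a nilpotent linear operator is in general much larger than the symmetric algebra of the kernel of that operator: for a Jordan string $u\mapsto v\mapsto w\mapsto 0$ the element $v^2-2uw$ is killed by the derivation yet does not lie in $\bbk[w]$, and $\ad e|_V$ has Jordan blocks of length $\ge 3$ for most nilpotent $e$. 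So $\ad e$-invariance plus membership in $\eus S(V)$ does not place $\cH_k$ in $\eus S(\g_e)$. What must actually be shown is that $\cH_k$, viewed via $\varkappa$ as a function on $\g$ (a priori constant only along $\bbk e=V^{\perp}$), is constant along all of $[\g,e]=(\g_e)^{\perp}$. This follows, for instance, from the limit formula $\cH_k(x)=\lim_{t\to\infty}t^{k-m}\cH(x+te)$, which holds uniformly for $x$ in compact sets because $k$ is minimal: conjugating $x+te$ by $\exp(\frac{s}{t}\ad z)$ moves it to $x+s[z,e]+te+O(1/t)$ without changing the value of $\cH$, whence $\cH_k(x+s[z,e])=\cH_k(x)$ for all $z\in\g$. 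The same formula gives the $G_e$-invariance at once, since $\Ad(g)(x+te)=\Ad(g)x+te$ for $g\in G_e$; no analysis of the component group or of the reductive part of $G_e$ is required.

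Several of your structural assertions are also false and should be repaired even where they are not load-bearing: $V\neq[\g,e]\oplus\g_e$ (both summands lie in $V$, but their dimensions add up to $\dim\g=\dim V+1$, and $e$ lies in both); the $\ad h$-weights occurring in $V=e^{\perp}$ are not bounded below by $-1$ ($V$ contains every weight space $\g(j)$ with $j\neq-2$, including arbitrarily negative $j$, together with a hyperplane of $\g(-2)$); and $h\notin\g_e$, so it cannot be part of a generating set for the reductive part of $G_e$ (that role is played by the centraliser of the whole $\tri$-triple). Your filtration argument for the $\g_e$-invariance of $\cH_k$ is essentially sound, but it presupposes the membership $\cH_k\in\eus S(\g_e)$, which is precisely the content of the lemma and the one point your sketch does not establish.
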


\begin{rmk}  The polynomial
$\cH_k$ coincides with certain polynomial $^{e\!}\cH$ that can be constructed via the Slodowy slice associated with $\{e,h,f\}$, see \cite[Prop.\,0.1\ \&\ Cor.\,A.2]{ppy}.
Therefore, we use notation $^{e\!}\cH$ for the above polynomial $\cH_k$ associated with $\cH$.
\end{rmk}

From now on, we assume that $e\in\g$ is Richardson and $\p=\el\oplus\n$ is a polarisation of $e$. 
To any homogeneous $\cH\in\eus S(\g)^G$,
we have attached two polynomials, $\cH^\bullet \in \eus S(\q)^Q$ (see Section~\ref{sect1})
and $^{e\!}\cH\in \eus S(\g_e)^{G_e}$. To provide a link between them,
one has to adjust the above construction of $^{e\!}\cH$ to 
the decompositions $\q=\p\oplus\n_-$ and $\q^*=\n\oplus\p_-$. 
If $f\in \n_-$, then no special adjustment is needed. Otherwise, we take $y\in \n_-$
such that $\varkappa(y,e)=1$.
Then $f-y\in V$ and   $\g=\bbk y\oplus V$. 
If $\cH=\sum_i y^{m-i} \tilde \cH_i$ with $\tilde \cH_i\in\eus S^{i}(V)$, 
then it is  easily seen that the nonzero summand with minimal $i$
occurs for $i=k$ and $\tilde \cH_k=\cH_k$. 
That is, we can (and will) use such an $y\in\n_-$ in place of $f$ for defining $^{e\!}\cH$.

Recall that $\q^* =\n\oplus \p_-$ is a sum of $P$-modules and the second summand here
is also a $Q$-submodule.
Since $e\in\n$,  $e+\p_-$ is an affine subspace of $\q^*$ and
we identify $\bbk[e+\p_-]$ with $\bbk[\p_-]=\eus S(\p)$. Recall also that
$\g_e=\p_e\subset\p$ and therefore $\eus S(\g_e)\subset \eus S(\p)$. Hence $^{e\!}\cH$ can be 
thought of as element of $\eus S(\p)$ that belongs to the subalgebra $\eus S(\g_e)^{G_e}$.

\begin{prop}               \label{coincidence}
If $\cH\in\eus S(\g)^G$ is homogeneous, then under the above identifications, we have
$\cH^\bullet\vert_{e+\p_-}=\,^{e\!}\cH$. In particular, $\cH^\bullet\vert_{e+\p_-}$ belongs to the subalgebra 
$\eus S(\g_e)^{G_e}$ of\/ $\eus S(\p)$ and $\deg_\p(\cH^\bullet)=\deg (^{e\!}\cH)$.
\end{prop}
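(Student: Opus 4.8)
The plan is to compare the two constructions term by term, using the explicit bi-grading of $\eus S(\g)=\eus S(\p)\otimes\eus S(\n_-)$ underlying $\cH^\bullet$ and the $\tri$-adapted decomposition $\g=\bbk y\oplus V$ (with $y\in\n_-$, $\varkappa(y,e)=1$) underlying $^{e\!}\cH$. First I would fix an $\tri$-triple $\{e,h,f\}$ with $e\in\n$ and a suitable $y\in\n_-$ as in the paragraph preceding the statement, so that $\g=\bbk y\oplus V$, $\g_e\subset V\cap\p$, and writing $\cH=\sum_i y^{m-i}\tilde\cH_i$ with $\tilde\cH_i\in\eus S^i(V)$, the minimal nonzero summand is $y^{m-k}\cH_k$ with $\cH_k={}^{e\!}\cH\in\eus S^k(\g_e)^{G_e}$ (Lemma~\ref{f-ort} and the remark following it).

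The key observation is that restriction to the affine subspace $e+\p_-\subset\q^*=\n\oplus\p_-$ is exactly evaluation: under the identification $\bbk[e+\p_-]=\eus S(\p)$, a monomial $x_{i_1}\cdots x_{i_r}$ in $\eus S(\g)=\eus S(\n_-)\otimes\eus S(\p)$ restricts to its value obtained by substituting the $\n_-$-coordinates by the corresponding coordinates of $e$ (and keeping the $\p$-coordinates as functions on $\p_-$). Since $e\in\n$ has no component in $\el$ or $\n_-$, any factor lying in $\p\setminus\n$ or in $\n_-$ that is \emph{not} paired against the $\n$-part kills the monomial; more precisely one checks that $\eus S^a(\p)\otimes\eus S^b(\n_-)$ restricted to $e+\p_-$ produces a polynomial on $\p_-$ of degree at most $a$, with equality forcing all $\n_-$-factors to be "evaluated at $e$". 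Hence in $\cH=\sum_{i} \cH^{(m-i,i)}$, the component $\cH^{(m-b,b)}=\cH^\bullet$ with the largest $\n_-$-degree $b$ is the only one whose restriction can attain $\p$-degree $m-b$, and its restriction is obtained by substituting $e$ into every $\n_-$-slot; the lower components $\cH^{(m-i,i)}$ with $i<b$ restrict to polynomials of strictly smaller degree or — after the same substitution — land inside the same span, so no cancellation with $\cH^\bullet\vert_{e+\p_-}$ occurs in top $\p$-degree. Thus $\cH^\bullet\vert_{e+\p_-}$ is a nonzero homogeneous polynomial on $\p_-\simeq\p^*$ of degree $\deg_\p(\cH^\bullet)=m-b$.

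The remaining and main point is to identify $\cH^\bullet\vert_{e+\p_-}$ with $^{e\!}\cH=\cH_k$. Here I would compute the restriction of $\cH$ to $e+\p_-$ directly from $\cH=\sum_i y^{m-i}\tilde\cH_i$: since $\varkappa(y,e)=1$ and $y\in\n_-$, the linear form $y$ restricted to $e+\p_-$ equals $1$ (the constant), while each $\tilde\cH_i\in\eus S^i(V)$ restricts to some polynomial on $\p_-$; because $\g_e\subset V\cap\p$, the top-degree-in-$\p$ part of $\tilde\cH_i\vert_{e+\p_-}$ has degree $\le i$, with equality precisely when $\tilde\cH_i\in\eus S^i(\g_e)$. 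Combining with the previous paragraph, the largest $\p$-degree $m-b$ appearing in $\cH\vert_{e+\p_-}$ equals the largest such degree coming from $y^{m-i}\tilde\cH_i$, which is $k$ (the minimal $i$ with $\tilde\cH_i\ne 0$), forced to have $\tilde\cH_k\in\eus S^k(\g_e)$; so $m-b=k$ and $\cH^\bullet\vert_{e+\p_-}=y^{m-k}\tilde\cH_k\vert_{e+\p_-}=\tilde\cH_k=\cH_k={}^{e\!}\cH$. The $G_e$-invariance of $^{e\!}\cH$ and the inclusion $^{e\!}\cH\in\eus S(\g_e)^{G_e}\subset\eus S(\p)$ are then inherited from Lemma~\ref{f-ort}, and $\deg_\p(\cH^\bullet)=k=\deg({}^{e\!}\cH)$ as required.

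\textbf{Main obstacle.} The delicate bookkeeping is the claim that in passing from $\cH$ to its restriction on $e+\p_-$, the component $\cH^\bullet$ is the \emph{unique} bi-homogeneous piece contributing to the top $\p$-degree and that this top piece is literally the substitution $\eta\mapsto e$ in the $\n_-$-variables — i.e. that no cross-terms from lower $\n_-$-degree components of $\cH$ interfere, and simultaneously that $y\vert_{e+\p_-}$ being the nonzero constant $1$ correctly converts the factor $y^{m-k}$ into $1$. I would make this rigorous by introducing on $\eus S(\g)$ the $\BZ$-grading in which $\n_-$ has weight $1$, $\p$ has weight $0$ (this is exactly the grading giving $\deg_{\n_-}$), noting that restriction to $e+\p_-$ intertwines the operator $\mathsf c_{t^{-1}}$ with scaling of the $\n_-$-coordinates, so that $\lim_{t\to0}t^b\mathsf c_{t^{-1}}(\cH)=\cH^\bullet$ (recorded in Section~\ref{sect1}) restricts to $\lim_{t\to 0} t^b(\mathsf c_{t^{-1}}\cH)\vert_{e+\p_-}$; but $\mathsf c_{t^{-1}}$ replaces each $\n_-$-coordinate $\xi$ by $t^{-1}\xi$, hence on $e+\p_-$ it replaces $e$ by $t^{-1}e$, and $\cH$ being $G$-invariant and homogeneous of degree $m$ gives $\cH(t^{-1}e+\varphi)=t^{-m}\cH(e+t\varphi)$, which is a polynomial identity in $t$ whose leading term in $t^{-1}$ (after multiplying by $t^b$) is exactly $^{e\!}\cH$. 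This is essentially the content of \cite[Prop.\,0.1]{ppy} reorganised through $\mathsf c_t$, and makes the equality $\cH^\bullet\vert_{e+\p_-}={}^{e\!}\cH$ transparent.
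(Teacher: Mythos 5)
Your strategy is essentially the paper's: compare the $(\p,\n_-)$ bi-grading with the $\bbk y\oplus V$ decomposition by restricting to $e+\p_-$, observe that restriction evaluates the $\n_-$-variables at $e$, and use Lemma~\ref{f-ort} to identify the bottom piece of the second decomposition with $^{e\!}\cH$. But there is one genuine gap, and it is precisely the step where the Richardson property of $e$ must enter: you assert that $\cH^\bullet\vert_{e+\p_-}$ is nonzero, justifying this only by saying that no cancellation occurs between bi-homogeneous components. That observation is true but beside the point: distinct components $\cH^{(m-i,i)}$ restrict to homogeneous polynomials of distinct degrees $m-i$ on $\p_-$, so they can never cancel each other; the danger is that the single component $\cH^\bullet\in\eus S^{m-b}(\p)\otimes\eus S^{b}(\n_-)$ is annihilated by the restriction map $\mathrm{id}\otimes\mathrm{ev}_e$, whose kernel is large. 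Your bookkeeping only yields the inequality $\deg_{\n_-}(\cH)\ge m-k$ (the degree-$k$ part of $\cH\vert_{e+\p_-}$ equals $\cH_k\ne 0$, forcing $\cH^{(k,m-k)}\ne 0$); it cannot exclude $\deg_{\n_-}(\cH)>m-k$, in which case $\cH^\bullet$ would have $\p$-degree $<k$, all such components of $\cH\vert_{e+\p_-}$ vanish by the $\bbk y\oplus V$ computation, and the claimed equality would fail. The same circularity sits inside your $\mathsf c_t$ reformulation: $t^{b}\bigl(\mathsf c_{t^{-1}}\cH\bigr)\vert_{e+\p_-}=t^{b-m}\cH(e+t\varphi)$ has lowest-order term $t^{b-m+k}\,{}^{e\!}\cH$, and its limit as $t\to 0$ equals that leading coefficient only when $b=m-k$ --- which is exactly what is to be proved.

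The paper closes this gap with a short but indispensable argument: by Theorem~\ref{thm:coadj-inonu}, $\cH^\bullet$ is $Q$-invariant, hence $P$-invariant; since $e$ is Richardson, $\ov{P{\cdot}e}=\n$ and therefore $\ov{P{\cdot}(e+\p_-)}=\q^*$, so a nonzero $P$-invariant cannot vanish identically on $e+\p_-$. Inserting this one step makes your proof complete and, in fact, identical to the paper's. Separately, you several times write ``largest $\p$-degree'' and say lower components restrict to ``strictly smaller degree'' where the opposite holds: the component of highest $\n_-$-degree restricts to the homogeneous piece of \emph{lowest} degree on $\p_-$. That is only a terminological slip, but it should be fixed, as should the claim that $\tilde\cH_i\vert_{e+\p_-}$ attains degree $i$ ``precisely when $\tilde\cH_i\in\eus S^i(\g_e)$'' (the correct condition is that $\tilde\cH_i$ has a nonzero component in $\eus S^i(\p)\otimes\eus S^0(V\cap\n_-)$).
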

\begin{proof}
As in Lemma~\ref{f-ort}, assume that $\deg(\cH)=m$ and $\deg (^{e\!}\cH)=k$.
By definition, $\cH^\bullet$ is the nonzero bi-homogeneous component of $\cH$ with highest 
$\n_-$-degree.
By Theorem~\ref{thm:coadj-inonu}, $\cH^\bullet$ is $Q$-invariant and hence $P$-invariant.
Since $\ov{P{\cdot}e}=\n$, we have $\overline{P(e+\p_-)}=\q^*$, and  $\cH^\bullet$ does not vanish
on $e+\p_-$.
In view of  the inclusion $\g_e\subset\p$, Lemma~\ref{f-ort} (with $f$ replaced by $y$) shows 
that  $\cH$ has a non-zero summand of 
bi-degree $(k,m-k)$ with respect to  $(\p,\n_-)$. 
Hence $\deg_{\n_-}(\cH)\ge m-k$. On the other hand, $\p\subset V$ and 
$\cH^\bullet\vert_{e+\p_-}$ has degree at least $k$ as element of $\eus S(\p)$.
Therefore $\deg_{\n_-}\cH = m-k$
and $\cH^\bullet\vert_{e+\p_-}=\,^{e\!}\cH$.
\end{proof}

Since $\p_-$ is a $Q$-submodule of $\q^*$, the affine subspace $e+\p_-$  is 
invariant with respect to the subgroup $P_e\ltimes N_-^a\subset Q$.
Therefore, one has a well-defined  homomorphism 
\[
\psi:\ \eus S(\q)^Q\to \bbk[e+\p_-]^{P_e\ltimes N_-^a} 
\]
that takes $\cH$ to $\cH\vert_{e+\p_-}$.
Because $\ov{Q{\cdot}(e+\p_-)}=\q^*$,   $\psi$  is injective. It is also clear that, under the identification
$\bbk[e+\p_-]\simeq \eus S(\p)$, the image of $\psi$ belong to $\eus S(\p)^{P_e}$.
Hence $\psi$ can be thought of as a homomorphism from $\eus S(\q)^Q$ to
$\eus S(\p)^{P_e}$.

\begin{thm}   \label{thm:main3}
Let $\cF_1,\dots, \cF_l\in \eus S(\g)^G$ be the basic invariants and $\q$ the parabolic contraction 
of\/ $\g$ defined by\/ $\p$. Then 
\beq   \label{eq:alg-indep-equiv}
\text{
 $\cF_1^\bullet,\dots, \cF_l^\bullet$ are algebraically independent  if and only if \
 $^{e\!}\cF_1,\ldots,\,^{e\!}\cF_l$ are.}
\eeq
If the equivalent conditions of \eqref{eq:alg-indep-equiv} hold,
then 
\begin{itemize}
\item[\sf (i)] \ $\eus S(\q)^Q\supset \bbk[\cF_1^\bullet,\dots, \cF_l^\bullet]$ is an algebraic extension;
\item[\sf (ii)] \  $\psi(\eus S(\q)^Q) \subset \eus S(\g_e)^{P_e}$;
\item[\sf (iii)] \ Moreover, if\/ $\eus S(\g_e)^{G_e}=\eus S(\g_e)^{P_e}$ and this
algebra is freely generated  by $^{e\!}\cF_1,\ldots,\,^{e\!}\cF_l$, then 
$\eus S(\q)^Q$ is freely generated by  $\cF_1^\bullet,\dots, \cF_l^\bullet$.
\end{itemize}
\end{thm}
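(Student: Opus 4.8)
The plan is to build everything on the injective homomorphism $\psi\colon\eus S(\q)^Q\to\eus S(\p)^{P_e}$ and on Theorem~\ref{t-degrees}, combining a dimension/degree count with the equidimensionality statements already available. First I would establish the equivalence~\eqref{eq:alg-indep-equiv}. By Proposition~\ref{coincidence} we have $\psi(\cF_i^\bullet)=\cF_i^\bullet\vert_{e+\p_-}={}^{e\!}\cF_i$ for each $i$, and $\psi$ is injective and an algebra homomorphism; hence if $\cF_1^\bullet,\dots,\cF_l^\bullet$ are algebraically independent, so are their images ${}^{e\!}\cF_1,\dots,{}^{e\!}\cF_l$, and conversely algebraic independence of the ${}^{e\!}\cF_i$ forces that of the $\cF_i^\bullet$. (Alternatively, one can invoke Theorem~\ref{t-degrees} together with Proposition~\ref{coincidence}, since $\deg_{\n_-}(\cF_i^\bullet)=m_i-\deg({}^{e\!}\cF_i)$ and the Elashvili-type identity $\sum_i(m_i-\deg({}^{e\!}\cF_i))=\dim\n$ is exactly the equality case; I would use whichever packaging is cleaner.)

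Next, assume the equivalent conditions hold. For~(i), the $\cF_i^\bullet$ are $l$ algebraically independent elements of $\eus S(\q)^Q$, and by the Rosenlicht theorem $\trdeg\bbk(\q^*)^Q=\ind\q=l$ (Theorem~\ref{thm:ind-q}); therefore $\bbk(\q^*)^Q$ is algebraic over $\bbk(\cF_1^\bullet,\dots,\cF_l^\bullet)$, and a fortiori $\eus S(\q)^Q$ is an algebraic extension of $\bbk[\cF_1^\bullet,\dots,\cF_l^\bullet]$. For~(ii), since $\psi$ is injective it suffices to show $\psi(\eus S(\q)^Q)\subset\eus S(\g_e)^{P_e}$; we already know $\psi(\eus S(\q)^Q)\subset\eus S(\p)^{P_e}$ from the $P_e$-invariance of $e+\p_-$, and the containment of the image in the subalgebra generated by $\eus S(\g_e)$ follows because each generator over $\bbk[\cF_1^\bullet,\dots,\cF_l^\bullet]$ is integral, its image is integral over $\bbk[{}^{e\!}\cF_1,\dots,{}^{e\!}\cF_l]\subset\eus S(\g_e)^{G_e}$, and $\eus S(\g_e)^{G_e}$ (being the integral closure considerations in the normal algebra $\eus S(\g_e)$) is integrally closed in its fraction field inside $\eus S(\g_e)$; the key point is that $\eus S(\g_e)$ is a polynomial ring, hence normal, so any element of $\eus S(\p)$ integral over $\eus S(\g_e)^{G_e}$ and lying in $\Frac(\eus S(\g_e))$ already lies in $\eus S(\g_e)$, and then $P_e$-invariance places it in $\eus S(\g_e)^{P_e}$.

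For~(iii), now suppose $\eus S(\g_e)^{G_e}=\eus S(\g_e)^{P_e}=\bbk[{}^{e\!}\cF_1,\dots,{}^{e\!}\cF_l]$. By~(ii), $\psi$ maps $\eus S(\q)^Q$ into this polynomial algebra, which is exactly $\bbk[\psi(\cF_1^\bullet),\dots,\psi(\cF_l^\bullet)]$; since $\psi$ is an injective algebra homomorphism and the $\psi(\cF_i^\bullet)$ are algebraically independent, $\psi$ restricts to an isomorphism $\bbk[\cF_1^\bullet,\dots,\cF_l^\bullet]\isom\bbk[\psi(\cF_1^\bullet),\dots,\psi(\cF_l^\bullet)]$, and composing with the inclusion $\eus S(\q)^Q\hookrightarrow$ (preimage under $\psi$) shows $\eus S(\q)^Q\subset\bbk[\cF_1^\bullet,\dots,\cF_l^\bullet]$. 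The reverse inclusion is Theorem~\ref{thm:coadj-inonu}, so $\eus S(\q)^Q=\bbk[\cF_1^\bullet,\dots,\cF_l^\bullet]$ is freely generated by the $\cF_i^\bullet$ (free generation being equivalent to algebraic independence once equality of the algebras is known).

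\textbf{Main obstacle.} The delicate point is~(iii), specifically the surjectivity direction: proving that $\psi$ actually maps $\eus S(\q)^Q$ \emph{onto} $\bbk[{}^{e\!}\cF_1,\dots,{}^{e\!}\cF_l]$, or equivalently that $\eus S(\q)^Q$ is no larger than $\bbk[\cF_1^\bullet,\dots,\cF_l^\bullet]$. The clean way to close this is to compare Poincaré series: Lemma~\ref{lm:poincare} says $\eus L^\bullet(\bbk[\g^*]^G)$ and $\bbk[\g^*]^G$ have the same Poincaré series, and $\bbk[\g^*]^G=\bbk[\cF_1,\dots,\cF_l]$ is free on generators of degrees $m_i=\deg\cF_i=\deg\cF_i^\bullet$; so if one can show $\eus S(\q)^Q$ has Poincaré series $\prod_i(1-t^{m_i})^{-1}$ — which would follow from $\psi$ being onto the degree-preserving polynomial algebra $\bbk[{}^{e\!}\cF_1,\dots,{}^{e\!}\cF_l]$ under the bi-grading, using that $\deg\cF_i^\bullet=\deg{}^{e\!}\cF_i+\deg_{\n_-}(\cF_i^\bullet)$ and Theorem~\ref{t-degrees} — then $\bbk[\cF_1^\bullet,\dots,\cF_l^\bullet]\subset\eus S(\q)^Q$ forces equality. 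I expect the real work is organizing the integrality-plus-normality argument in~(ii) so that it feeds cleanly into this dimension count, rather than any single hard estimate.
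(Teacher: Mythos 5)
Your proposal follows the paper's own proof almost step for step: the equivalence in \eqref{eq:alg-indep-equiv} via the injectivity of $\psi$ and $\psi(\cF_i^\bullet)={}^{e\!}\cF_i$; part (i) via $\trdeg\,\eus S(\q)^Q\le\ind\q=l$; and part (iii) via the sandwich $\bbk[{}^{e\!}\cF_1,\ldots,{}^{e\!}\cF_l]\subset\psi(\eus S(\q)^Q)\subset\eus S(\g_e)^{P_e}=\eus S(\g_e)^{G_e}=\bbk[{}^{e\!}\cF_1,\ldots,{}^{e\!}\cF_l]$ followed by injectivity of $\psi$. Your closing worry about the ``surjectivity direction'' in (iii) is unfounded: that sandwich already forces $\psi(\eus S(\q)^Q)=\bbk[{}^{e\!}\cF_1,\ldots,{}^{e\!}\cF_l]$, so the proposed Poincar\'e-series detour is unnecessary (and Lemma~\ref{lm:poincare} concerns $\eus L^\bullet(\bbk[\g^*]^G)$, not $\eus S(\q)^Q$, so it could not close the gap by itself anyway).

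The one step that needs repair is (ii). You assert that the elements of $\psi(\eus S(\q)^Q)$ are \emph{integral} over $\bbk[{}^{e\!}\cF_1,\ldots,{}^{e\!}\cF_l]$ and that they lie in the field of fractions of $\eus S(\g_e)$; neither is justified as written. Part (i) gives only an algebraic ring extension, which need not be integral, and membership in the fraction field of $\eus S(\g_e)$ is essentially the point to be proved. Normality of $\eus S(\g_e)$ is also not the operative property: $\bbk[x]$ is normal, yet a larger ring can contain elements integral over $\bbk[x]$ that do not belong to it --- the obstruction is precisely that they need not lie in $\bbk(x)$. The fact the paper actually uses is that $\eus S(\g_e)$ is \emph{algebraically closed} in $\eus S(\p)$: since $\g_e$ is a linear subspace of $\p$, the algebra $\eus S(\p)$ is a polynomial ring over $\eus S(\g_e)$, so its fraction field is purely transcendental over that of $\eus S(\g_e)$; hence any element of $\eus S(\p)$ that is algebraic over $\bbk[{}^{e\!}\cF_1,\ldots,{}^{e\!}\cF_l]\subset\eus S(\g_e)$ already lies in $\eus S(\g_e)$. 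Combined with the $P_e$-invariance of the image, which you did note, this yields $\psi(\eus S(\q)^Q)\subset\eus S(\g_e)\cap\eus S(\p)^{P_e}=\eus S(\g_e)^{P_e}$, exactly as in \eqref{eq:bullet-e2}. With that substitution your argument is complete and coincides with the paper's.
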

\begin{proof}
The equivalence of two conditions in~\eqref{eq:alg-indep-equiv} follows from the fact that
$\psi$ is injective and $\psi(\cF_i^\bullet)=^{e\!\!}\! \cF_i$ (see Proposition~\ref{coincidence}).

(i) \ 
Since $\ind\q=l$ (see Theorem~\ref{thm:ind-q}), one always has $\trdeg\, \eus S(\q)^Q\le l$. 
Therefore $\trdeg\,\eus S(\q)^Q= l$, and the assertion follows.

(ii) \ It follows from (i) that  $\psi(\eus S(\q)^Q)\supset \bbk[^{e\!}\cF_1,\ldots,\,^{e\!}\cF_l]$ 
is an algebraic extension. 
Because $^{e\!}\cF_i\in \eus S(\g_e)^{G_e}$ and 
$\eus S(\g_e)$ is algebraically closed in $\eus S(\p)$, 
we have  
\beq   \label{eq:bullet-e2}
   \psi(\eus S(\q)^Q) \subset \eus S(\g_e)\cap \eus S(\p)^{P_e}=\eus S(\g_e)^{P_e} .
\eeq

(iii) \ Here we have
$\eus S(\g_e)^{G_e}=\bbk[^{e\!}\cF_1,\ldots,\,^{e\!}\cF_l]\subset \psi(\eus S(\q)^Q)\subset \eus S(\g_e)^{P_e}
=\eus S(\g_e)^{G_e}$.
\\
Whence $\psi(\eus S(\q)^Q)=\bbk[^{e\!}\cF_1,\ldots,\,^{e\!}\cF_l]$ and therefore
$\eus S(\q)^Q=\bbk[\cF_1^\bullet,\dots, \cF_l^\bullet]$.
\end{proof}

\begin{rmk}   \label{rm:pro-Ge-i-Pe}
In view of the above theorem, it is important to know when the $P_e$- and $G_e$-invariants
in $\eus S(\g_e)$ coincide. This condition is weaker than the coincidence of $P_e$ and $G_e$.
For any Richardson element $e\in\n=\p^{nil}$, one can consider the chain of groups
\[
    G_e^o \subset P_e \subset G_e ,
\]
where $G_e^o$ is the identity component of $G_e$, and the corresponding chain of rings 
of invariants
\[
   \eus S(\g_e)^{G_e}\subset \eus S(\g_e)^{P_e}\subset \eus S(\g_e)^{G_e^o}=:\eus S(\g_e)^{\g_e} . 
\]
All these inclusions can be strict. As is well known, the equality $P_e=G_e$ has the following
geometric meaning.  The cotangent bundle $T^*(G/P)\simeq G\times_P\n$ has the natural collapsing
\[
    \phi : G\times_P\n \to G{\cdot}\n=\ov{G{\cdot}e} \subset \g 
\]
such that the fibre $\phi^{-1}(e)$ has cardinality $\#(G_e/P_e)$, see e.g. \cite[\S\,7]{bk79}.
Therefore, $\phi$ is birational
(and thereby is a resolution of singularities of $\ov{G{\cdot}e}$) if and only if $G_e=P_e$.
It is also known that  if $e$ is even (i.e., the weighted Dynkin diagram of $e$ has only even labels),
then $P_e=G_e$ for the Dynkin-Jacobson-Morozov parabolic subalgebra associated with $e$.
\end{rmk}

\section{Parabolic contractions for classical Lie algebras}
\label{sect4}

\noindent
In this section, we prove that (i)  if $\g$ is a simple Lie algebra
of type $\GR{A}{l}$ or $\GR{C}{l}$, then
$\eus S(\q)^Q$ is a polynomial algebra for any $\p$; \ (ii) if $\g$ is of type  $\GR{B}{l}$, then
$\eus S(\q)^Q$ is a polynomial algebra whenever the  
Levi subalgebra of $\p$ is of the form
$\mathfrak{gl}_{n_1}\oplus\ldots\oplus \mathfrak{gl}_{n_t}$, where $n_1,\dots,n_t$ are odd.

It is quite common in invariant-theoretic problems
that a certain method works well in types $\GR{A}{l}$ and 
$\GR{C}{l}$ and does not extend in full generality to other simple Lie algebras. This 
has happened in~\cite{ppy} in connection with the study of symmetric invariants of centralisers,
and also evinces here, because our approach relies on results of that paper.
The same phenomenon also manifests itself in \cite{ffp, ffp2}, 
where an explicit description of a Borel (or parabolic) contraction of simple $SL_{l+1}$ or 
$Sp_{2l}$-modules  to  ``highest weight" $Q$-modules is obtained. 
No similar results are known so far in other types.

\begin{thm}                       \label{thm:AC}
Suppose that $\g$ is either $\gt{sl}_{l+1}$ or $\gt{sp}_{2l}$ and $\q$ is a parabolic contraction of\/ $\g$.
Then there exist basic invariants $\cF_1,\dots, \cF_l\in \eus S(\g)^G$ such that
$\cF_1^\bullet,\dots, \cF_l^\bullet$ freely generate $\eus S(\q)^{Q}$ and satisfy the Kostant 
equality in $\q$, and the equality $\sum_{i=1}^l\deg_{\,\n_-}(\cF_i^\bullet)=\dim\n$ holds.
\end{thm}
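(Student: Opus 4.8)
The plan is to combine Theorem~\ref{thm:main3} with the known structure of symmetric invariants of centralisers in types $\GR{A}{l}$ and $\GR{C}{l}$, so that essentially all the work is reduced to citing \cite{ppy}. First I would invoke the main result of \cite[Section\,4]{ppy}: for $\g=\gt{sl}_{l+1}$ or $\gt{sp}_{2l}$ and \emph{any} nilpotent element $e$, there exist basic invariants $\cF_1,\dots,\cF_l\in\eus S(\g)^G$ such that $^{e\!}\cF_1,\dots,\,^{e\!}\cF_l$ are algebraically independent and freely generate $\eus S(\g_e)^{\g_e}$. Since $e$ is Richardson here, this applies; the next point is that the group $G_e$ acts on $\eus S(\g_e)$ and the invariants $^{e\!}\cF_i$ are automatically $G_e$-invariant by Lemma~\ref{f-ort}, so $\eus S(\g_e)^{\g_e}=\eus S(\g_e)^{G_e^o}\supseteq \eus S(\g_e)^{G_e}\supseteq \bbk[^{e\!}\cF_1,\dots,\,^{e\!}\cF_l]=\eus S(\g_e)^{\g_e}$ forces $\eus S(\g_e)^{G_e}=\eus S(\g_e)^{\g_e}$, and \emph{a fortiori} $\eus S(\g_e)^{P_e}=\eus S(\g_e)^{G_e}$ since $\eus S(\g_e)^{G_e}\subseteq\eus S(\g_e)^{P_e}\subseteq\eus S(\g_e)^{G_e^o}$. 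Thus the full hypothesis of Theorem~\ref{thm:main3}(iii) is met.

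Then Theorem~\ref{thm:main3}(iii) yields directly that $\eus S(\q)^Q$ is freely generated by $\cF_1^\bullet,\dots,\cF_l^\bullet$, while the equivalence \eqref{eq:alg-indep-equiv} gives that $\cF_1^\bullet,\dots,\cF_l^\bullet$ are algebraically independent. For the remaining two claims I would turn to Theorem~\ref{t-degrees}: since $\ind\q=\rk\g$ by Theorem~\ref{thm:ind-q}, algebraic independence of $\cF_1^\bullet,\dots,\cF_l^\bullet$ is equivalent to the equality $\sum_{i=1}^l\deg_{\n_-}(\cF_i^\bullet)=\dim\n$, and in that case Theorem~\ref{t-degrees} also asserts that $\{\cF_i^\bullet\}_{i=1}^l$ satisfy the Kostant equality in $\q$. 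This closes every part of the statement.

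The only genuine content beyond bookkeeping is the input from \cite{ppy}, which is exactly the step I would expect a reader to worry about; but since the excerpt explicitly permits citing earlier results, and \cite[Section\,4]{ppy} is quoted in the introduction precisely for the assertion that \emph{all} nilpotent $e$ in types $\GR{A}{l}$ and $\GR{C}{l}$ satisfy the condition on $^{e\!}\cF_1,\dots,\,^{e\!}\cF_l$, this is legitimate. One small point to be careful about: \cite{ppy} is naturally phrased in terms of $\eus S(\g_e)^{\g_e}$ (the Lie-algebra invariants), whereas Theorem~\ref{thm:main3}(iii) wants $\eus S(\g_e)^{G_e}=\eus S(\g_e)^{P_e}$; the bridge is the $G_e$-invariance of $^{e\!}\cF_i$ from Lemma~\ref{f-ort} together with the sandwiching of the three invariant rings between $\eus S(\g_e)^{G_e}$ and $\eus S(\g_e)^{G_e^o}$, as in Remark~\ref{rm:pro-Ge-i-Pe}. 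Once that is noted, there is no obstacle and the proof is a short concatenation of Theorems~\ref{thm:ind-q}, \ref{t-degrees}, and \ref{thm:main3}.
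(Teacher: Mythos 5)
Your proposal is correct and follows essentially the same route as the paper: choose the coefficients of the characteristic polynomial as basic invariants, cite \cite[Section\,4]{ppy} for the algebraic independence of $^{e\!}\cF_1,\dots,\,^{e\!}\cF_l$ and the description of $\eus S(\g_e)^{\g_e}$, deduce $\eus S(\g_e)^{P_e}=\eus S(\g_e)^{G_e}$ from the sandwich $G_e^o\subset P_e\subset G_e$, apply Theorem~\ref{thm:main3}(iii), and finish with Theorem~\ref{t-degrees}. Your explicit spelling-out of the passage from $\eus S(\g_e)^{\g_e}$ to $\eus S(\g_e)^{G_e}$ via the $G_e$-invariance of the $^{e\!}\cF_i$ is exactly what the paper leaves implicit in the phrase ``this also shows.''
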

\begin{proof}  
For $\g=\gt{sl}_{l+1}$ or $\gt{sp}_{2l}$, let  $\cF_1,\dots, \cF_l\in \eus S(\g)^G$ be 
the coefficients of the characteristic polynomial of a matrix in $\g$. It is proved in 
\cite[Theorems~4.2\,\&\,4.4]{ppy} that
$\,^{e\!}\cF_1,\ldots,\,^{e\!}\cF_l$ are algebraically independent  for  {\sl every\/} nonzero 
nilpotent element $e\in\g$ and
\beq     \label{eq:e-adapted-inv}
    \eus S(\g_e)^{G_e}=\eus S(\g_e)^{\g_e}=\bbk[^{e\!}\cF_1,\ldots,\,^{e\!}\cF_l] .
\eeq
This also shows that if $e\in\n$ is Richardson, then $ \eus S(\g_e)^{G_e}= \eus S(\g_e)^{P_e}$.
Therefore, applying Theorem~\ref{thm:main3} to our Richardson element $e\in \n$, we conclude that, 
for the above-mentioned choice of basic invariants,
 $\eus S(\q)^{Q}$ is freely generated by $\cF_1^\bullet,\dots, \cF_l^\bullet$.

Since $\cF_1^\bullet,\dots,\cF^\bullet_l$ are algebraically independent, it follows from
Theorem~\ref{t-degrees} that  $\cF_1^\bullet,\dots,\cF^\bullet_l$ satisfy the Kostant equality and
$\sum_{i=1}^l\deg_{\,\n_-}(\cF_i^\bullet)=\dim\n$.
\end{proof}

To describe our results in the orthogonal case, we introduce some terminology.
We say that a parabolic subalgebra of $\g=\gt{so}_{2l+1}$ is {\it admissible}, if  
the  Levi subalgebras of\/ $\p$ are of the 
form\/ $\mathfrak{gl}_{n_1}\oplus\ldots\oplus \mathfrak{gl}_{n_t}$,
where $n_1,\ldots,n_t$ are odd. The corresponding parabolic contractions and Richardson orbits are
said to be {\it admissible}, too.

\begin{thm}                       \label{thm:B}
Let\/ 
$\q$ be an admissible parabolic contraction of\/ $\gt{so}_{2l+1}$. 
Then there exist basic invariants $\cF_1,\dots, \cF_l\in \eus S(\g)^G$ such that
$\cF_1^\bullet,\dots, \cF_l^\bullet$ \ freely generate $\eus S(\q)^{Q}$
and satisfy the Kostant equality in $\q$, and the equality $\sum_{i=1}^l\deg_{\,\n_-}(\cF_i^\bullet)=\dim\n$ holds.
\end{thm}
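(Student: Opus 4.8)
The plan is to mimic the strategy of Theorem~\ref{thm:AC}, reducing everything to the study of $\eus S(\g_e)^{G_e}$ via Theorem~\ref{thm:main3}, but now for the admissible Richardson orbits in $\gt{so}_{2l+1}$. First I would fix an admissible parabolic $\p$, so that the Levi $\el$ has the form $\mathfrak{gl}_{n_1}\oplus\dots\oplus\mathfrak{gl}_{n_t}$ with all $n_i$ odd, and let $e\in\n$ be a Richardson element for $\p$. The basic invariants $\cF_1,\dots,\cF_l\in\eus S(\g)^G$ to be used are the ``natural'' ones: the first $l$ nonzero coefficients of the characteristic polynomial of a matrix in $\gt{so}_{2l+1}$ (equivalently, $\mathrm{tr}(X^{2i})$ for $i=1,\dots,l$), which are exactly the generators studied in \cite{ppy}. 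The key external input is the corresponding result from \cite[Section\,4]{ppy}: for the nilpotent orbits whose partition is compatible with an admissible parabolic, the polynomials $^{e\!}\cF_1,\dots,{}^{e\!}\cF_l$ are algebraically independent and generate $\eus S(\g_e)^{G_e}$, so that $\eus S(\g_e)^{G_e}=\eus S(\g_e)^{\g_e}=\bbk[^{e\!}\cF_1,\dots,{}^{e\!}\cF_l]$.

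The second step is to verify the hypothesis of Theorem~\ref{thm:main3}(iii), namely that $\eus S(\g_e)^{P_e}=\eus S(\g_e)^{G_e}$ for these admissible Richardson elements. Here I would exploit the structure of $G_e$ for orthogonal algebras: the component group $G_e/G_e^\circ$ and the reductive part of $G_e$ are controlled by the partition of $e$, and for a partition all of whose parts are odd (equivalently, coming from a Levi with $\mathfrak{gl}$-blocks of odd size) the relevant sign/$\mathbb Z_2$ factors act trivially on $\eus S(\g_e)^{\g_e}$. Alternatively, and perhaps more cleanly, I would simply observe that the $G_e$-invariants $^{e\!}\cF_i$ already generate the full algebra $\eus S(\g_e)^{\g_e}=\eus S(\g_e)^{G_e^\circ}$, and $G_e^\circ\subset P_e\subset G_e$ forces $\eus S(\g_e)^{G_e}\subset\eus S(\g_e)^{P_e}\subset\eus S(\g_e)^{G_e^\circ}=\eus S(\g_e)^{G_e}$, so all three coincide automatically. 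This last remark is the real reason the admissibility hypothesis is convenient: it is precisely what \cite{ppy} needs to guarantee that the $^{e\!}\cF_i$ span everything, and once that holds the $P_e$ versus $G_e$ issue evaporates.

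With these two steps in place the proof is essentially formal. Applying Theorem~\ref{thm:main3}(iii) to the Richardson element $e\in\n$ and the chosen $\cF_1,\dots,\cF_l$ yields that $\eus S(\q)^Q$ is freely generated by $\cF_1^\bullet,\dots,\cF_l^\bullet$. In particular $\cF_1^\bullet,\dots,\cF_l^\bullet$ are algebraically independent, so by Theorem~\ref{t-degrees} they satisfy the Kostant equality in $\q$ and $\sum_{i=1}^l\deg_{\n_-}(\cF_i^\bullet)=\dim\n$. This gives all three assertions of the theorem.

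The main obstacle is the input from \cite{ppy}: verifying (or correctly citing) that for \emph{every} Richardson element arising from an admissible parabolic in type $\GR{B}{l}$, the polynomials $^{e\!}\cF_1,\dots,{}^{e\!}\cF_l$ are algebraically independent and generate $\eus S(\g_e)^{\g_e}$. Unlike types $\GR{A}{l}$ and $\GR{C}{l}$, where this holds for all nilpotent $e$, in type $\GR{B}{l}$ it fails for general nilpotent orbits (cf.\ \cite[Example\,4.1]{ppy}), and one must check that the odd-block condition on the Levi selects exactly the partitions for which the argument of \cite{ppy} goes through — this amounts to an analysis of the partition combinatorics of Richardson orbits in $\gt{so}_{2l+1}$ and matching it against the list of ``good'' partitions in \cite{ppy}. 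I would also want to double-check that a Richardson element for $\mathfrak{gl}_{n_1}\oplus\dots\oplus\mathfrak{gl}_{n_t}$ inside $\gt{so}_{2l+1}$ indeed has the expected partition $(n_1,\dots,n_t)$ (after the standard ``collapse'' to an orthogonal partition), using \cite{ke83}; since all $n_i$ are odd this collapse is essentially trivial, which is the combinatorial heart of why admissibility is the right hypothesis.
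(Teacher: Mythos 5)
Your proposal follows essentially the same route as the paper: the same choice of basic invariants (coefficients of the characteristic polynomial), the same appeal to \cite[Theorem\,4.7]{ppy} together with \cite[Theorem\,2.2]{ppy} for the algebraic independence of the $^{e\!}\cF_i$ and the equality $\eus S(\g_e)^{G_e}=\eus S(\g_e)^{\g_e}=\bbk[^{e\!}\cF_1,\ldots,\,^{e\!}\cF_l]$, the same squeeze $G_e^\circ\subset P_e\subset G_e$ to settle the $P_e$-versus-$G_e$ question, and the same final application of Theorems~\ref{thm:main3} and \ref{t-degrees}. The only slip is in your closing parenthetical: the partition of the admissible Richardson element is not $(n_1,\dots,n_t)$ itself but rather the partition $\boldsymbol{\lb}$ whose \emph{dual} is $(\mu_1^{2},\dots,\mu_s^{2},1^{2k+1})$ with all $\mu_i$ odd, i.e.\ $\boldsymbol{\lb}$ has exactly one odd part $\lb_1$ and all remaining parts even --- which is precisely the class covered by \cite[Theorem\,4.7]{ppy}, the matching with admissible Levi subalgebras being carried out via \cite[4.2]{ke83} exactly as you anticipate.
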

\begin{proof} As in Theorem~\ref{thm:AC}, take $\cF_1,\dots, \cF_l$ to be the coefficients of the
characteristic polynomial of a matrix in $\g$. 
Suppose that $e\in\g$ is given by the partition
$\boldsymbol{\lb}=(\lb_1\ge\lb_2\ge\dots\ge\lb_t)$ of $2l+1$ such that $\lb_1$ is odd and all other parts are 
even. (Recall that, for the nilpotent elements of $\sov$, each even part of \
$\boldsymbol{\lb}$ occurs an even number of times.)
By \cite[Theorem~4.7]{ppy}, $\cF_1,\dots, \cF_l$ is a ``very good generating system'' for $e$,
which, in view of \cite[Theorem\,2.2]{ppy}, implies that  $\,^{e\!}\cF_1,\ldots,\,^{e\!}\cF_l$ are 
algebraically independent  and
\eqref{eq:e-adapted-inv}  holds. 

Using \cite[4.2]{ke83}, one verifies that the above elements $e$ 
are exactly the admissible Richardson elements.
In this case, $\boldsymbol{\widehat\lb}$ is of the form
$(\mu_1^{2},\mu_2^{2},\dots,\mu_s^{2},1^{2k+1})$, where all 
$\mu_i$ are odd and $\mu_1\ge \mu_2\ge \ldots \ge \mu_s\ge 3$, and then
$\el=\mathfrak{gl}_{\mu_1}\oplus\ldots\oplus \mathfrak{gl}_{\mu_t}\oplus (\mathfrak{gl}_{1})^k$. 
Therefore, Theorem~\ref{thm:main3} can be applied to the admissible
parabolic subalgebras $\p$ and parabolic contractions $\q$, and we conclude that
 $\eus S(\q)^{Q}$ is freely generated by $\cF_1^\bullet,\dots, \cF_l^\bullet$.
 The rest is the same as in Theorem~\ref{thm:AC}.
\end{proof}

\begin{rema} 
Theorem~4.7 in \cite{ppy}, which is used in the previous proof, refers also to similar 
nilpotent elements in $\mathfrak{so}_{2l}$. But all those elements are not Richardson.
\end{rema}

For the parabolic contractions described in Theorems~\ref{thm:AC} and \ref{thm:B}, the basic invariants in $\eus S(\q)^Q$ have the 
same degrees as the basic invariants in $\eus S(\g)^G$. But the algebra $\eus S(\q)^Q$ is bi-graded, 
and our next goal is to determine the bi-degrees of 
$\cF_1^\bullet,\dots,\cF^\bullet_l$ 
with respect to decomposition~\eqref{eq:nomer1} in the corresponding cases. 
By Proposition~\ref{coincidence}, we have $\deg_\p(\cF_i^\bullet)=\deg (^{e\!}\cF_i)$.
For \un{all} (resp. \un{some}) nilpotent elements in $\gt{sl}_{l+1}$ or $\gt{sp}_{2l}$ (resp.  
$\gt{so}_{2l+1}$), there is an explicit algorithm for computing the degrees of 
$\,^{e\!}\cF_1,\dots,\,^{e\!}\cF_l$ \cite[Sect.\,4]{ppy}. We prove below that, for the (admissible) Richardson elements, this can be restated in terms of a Levi subalgebra $\el\subset\p$.

\begin{prop}           \label{degrees-ppy}
Let $e\in\g$ be a Richardson element  with a polarisation $\p=\el\oplus\n$, where 
\begin{itemize}
\item $\p$ is any parabolic subalgebra, if\/ $\g=\gt{sl}_{l+1}$ or\/ $\gt{sp}_{2l}$;
\item $\p$ is admissible, if\/ $\g=\gt{so}_{2l+1}$.
\end{itemize}
Then the multiset of degrees of 
$\,^{e\!}\cF_1,\dots,^{e\!}\cF_l$  is the same as the multiset of degrees of the basic $L$-invariants in 
$\eus S(\gt l)$. 
\end{prop}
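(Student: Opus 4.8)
The plan is to compute both multisets explicitly and check they agree, using the combinatorial descriptions available in the classical cases. For the Levi side, note that if $\el=\gt{gl}_{n_1}\oplus\dots\oplus\gt{gl}_{n_t}$ (with an extra term $\gt{gl}_1^k$ or a small classical factor in the orthogonal case, but for admissible $\p$ the Levi is exactly a sum of $\gt{gl}_{n_i}$'s with $n_i$ odd), then the multiset of degrees of basic $L$-invariants in $\eus S(\el)$ is the disjoint union over $i$ of $\{1,2,\dots,n_i\}$, since $\eus S(\gt{gl}_{n_i})^{GL_{n_i}}$ is a polynomial ring generated by the coefficients of the characteristic polynomial, in degrees $1,\dots,n_i$. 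So the first step is simply to record that the right-hand multiset is $\bigsqcup_i \{1,2,\dots,n_i\}$, read off from the block sizes of the Levi.

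The second step is to recall from \cite[Sect.\,4]{ppy} the algorithm computing $\deg(^{e\!}\cF_i)$ for the coefficients of the characteristic polynomial, and to translate it through the correspondence between a Richardson element $e$ and its polarisation. In type $\GR{A}{l}$ the nilpotent $e$ has a partition $\boldsymbol\lb$, the Levi block sizes $(n_1,\dots,n_t)$ are the parts of the transpose partition $\boldsymbol{\widehat\lb}$, and the degrees of $^{e\!}\cF_1,\dots,^{e\!}\cF_l$ (after reordering) are precisely $\bigsqcup_i\{1,\dots,n_i\}$ — this is the content of the computation in \cite{ppy}, where the $^{e\!}\cF$'s for the characteristic-polynomial invariants, restricted to the centraliser of $e$, have degrees governed by the columns of the Young diagram. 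In type $\GR{C}{l}$ one uses the symplectic partition and the same translation: the Levi of a polarisation of $e$ is determined by $\boldsymbol{\widehat\lb}$, and \cite[Thm.\,4.4]{ppy} gives the degrees, which again match $\bigsqcup_i\{1,\dots,n_i\}$ once one pairs up the parts of $\boldsymbol{\widehat\lb}$ as forced by the symplectic condition. In the admissible orthogonal case, the proof of Theorem~\ref{thm:B} already identifies $\boldsymbol{\widehat\lb}=(\mu_1^2,\dots,\mu_s^2,1^{2k+1})$ with all $\mu_i$ odd and $\el=\gt{gl}_{\mu_1}\oplus\dots\oplus\gt{gl}_{\mu_s}\oplus(\gt{gl}_1)^k$; then \cite[Thm.\,4.7]{ppy} (the ``very good generating system'' statement) yields the degrees of $^{e\!}\cF_1,\dots,^{e\!}\cF_l$, and one checks they are $\bigsqcup_i\{1,\dots,\mu_i\}\sqcup\{1\}^k$, exactly the $L$-invariant degrees.

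So the structure is: identify the Levi block data in each of the three cases from the partition of $e$ via \cite[4.2]{ke83} (done already in the proofs of Theorems~\ref{thm:AC} and \ref{thm:B}); write down the $L$-invariant degrees as the obvious union of $\{1,\dots,n_i\}$; invoke the explicit degree algorithm of \cite[Sect.\,4]{ppy} for the characteristic-polynomial invariants; and observe the two multisets coincide. I expect the main obstacle to be purely bookkeeping: matching the indexing conventions in \cite{ppy} (which phrase the degrees of $^{e\!}\cF_i$ in terms of the Jordan block sizes / columns of the Young diagram of $e$) against the Levi block sizes of the polarisation, and in the symplectic and orthogonal cases carefully handling the multiplicities forced on $\boldsymbol{\widehat\lb}$ by the bilinear-form constraints so that the factors $\gt{gl}_{n_i}$ of $\el$ are correctly enumerated. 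There is no new idea required beyond carefully citing and transcribing the results of \cite{ppy} and \cite{ke83}; the one thing to be careful about is that Proposition~\ref{coincidence} only gives $\deg_\p(\cF_i^\bullet)=\deg(^{e\!}\cF_i)$ as a multiset statement once we know the $^{e\!}\cF_i$ are algebraically independent, which is exactly the hypothesis in force for the $\p$ covered here.
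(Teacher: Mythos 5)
Your overall strategy is exactly the paper's: compute both multisets explicitly from the partition of $e$, using the degree algorithm of \cite[Sect.\,4]{ppy} for the characteristic-polynomial invariants and the description of polarisations in \cite{ke83}, and compare. Type $\GR{A}{l}$ and the admissible orthogonal case go through essentially as you describe (in type $\GR{A}{l}$ the paper's count is the transpose identity $\#\{j \mid \hat\lb_j\ge i\}=\lb_i$, which is your $\bigsqcup_j\{1,\dots,\hat\lb_j\}$).

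There is, however, a genuine gap in type $\GR{C}{l}$, which your opening claim papers over. You assert that the multiset of degrees of basic $L$-invariants is $\bigsqcup_i\{1,\dots,n_i\}$, a union over $\mathfrak{gl}$-blocks; but a Levi subalgebra of a parabolic in $\mathfrak{sp}_{2l}$ generically contains a symplectic summand $\mathfrak{sp}_r$, whose basic invariants have degrees $2,4,\dots,r$ rather than $1,\dots,r$. (For instance, $\boldsymbol{\lb}=(3,3,1,1)$ in $\mathfrak{sp}_8$ has Levi $\mathfrak{sp}_4\oplus\mathfrak{gl}_2$.) Since the proposition covers \emph{all} parabolics of $\mathfrak{sp}_{2l}$, this case cannot be waved away as ``pairing up parts''. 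Moreover, the $\mathfrak{gl}$-block sizes are not read off from $\boldsymbol{\widehat\lb}$ directly: one must first replace each even pair $\lb_{2j-1},\lb_{2j}$ by $\lb_{2j-1}+1,\lb_{2j}-1$ to get a modified partition $\boldsymbol{\lb'}$ and then dualise. The paper resolves both points at once with a chessboard filling of the Young diagram: the paired equal rows of the modified diagram carry labels $1,2,\dots,\hat\lb_j$, matching a factor $\mathfrak{gl}_{\hat\lb_j}$, the single unpaired row of length $r$ carries labels $2,4,\dots,r$, matching $\mathfrak{sp}_r$, and the passage from $\boldsymbol{\lb}$ to $\boldsymbol{\lb'}$ moves only empty boxes, so the label multiset is unchanged. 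Finally, a Richardson element in type $\GR{C}{l}$ may admit several non-conjugate polarisations with non-isomorphic Levis; the paper checks that the allowed alteration $\mathfrak{sp}_r\oplus\mathfrak{gl}_{r+2}\leadsto\mathfrak{sp}_{r+2}\oplus\mathfrak{gl}_{r+1}$ preserves the degree multiset. None of this is deep, but it is more than the bookkeeping you anticipated, and your stated formula for the $L$-side is false as written in the symplectic case.
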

\begin{proof} 1) 
To simplify exposition, we work here with 
$\mathfrak{gl}_{l+1}$ in place of $\mathfrak{sl}_{l+1}$. 
Then 
$\{\deg \cF_j\}=\{1,2,\dots,l+1\}$.
Recall that all nilpotent elements of $\mathfrak{gl}_{l+1}$ are Richardson.
Let $e\in\mathfrak{gl}_{l+1}$ correspond to the partition $\boldsymbol{\lb}=(\lb_1,\dots,\lb_t)$, 
where $\sum_i \lb_i=l+1$ and $\lb_1\ge \lb_2\ge\dots\ge\lb_t>0$.
Then 
\[
\#\{j \mid \deg (^{e\!}\cF_j)=i\}=\lb_i, 
\]
see \cite[p.\,368]{ppy}, 
i.e., the  multiset of degrees of the $\,^{e\!}\cF_j$'s  is $\{1^{\lb_1}, 2^{\lb_2},\dots, t^{\lb_t}\}$.  
More precisely,  if 
$\lb_1+\ldots +\lb_{i-1}+1 \le \deg(\cF_j) \le \lb_1+\ldots +\lb_{i}$, then $\deg(^{e\!}\cF_j)=i$. 
On the other hand, if $\boldsymbol{\widehat\lb}=(\hat\lb_1,\dots,\hat\lb_s)$ is  the dual partition,
then 
$\el\simeq\mathfrak{gl}_{\hat\lb_1}\oplus\dots\oplus \mathfrak{gl}_{\hat\lb_s}$. Therefore,
the basic invariants of degree $i$ in $\eus S(\el)^L$ occur with multiplicity \ $\#\{j \mid \hat\lb_j\ge i\}=\lb_i$.

2) If $\g=\mathfrak{sp}_{2l}$, then $\{\deg \cF_j\}=\{2,4,\dots,2l\}$ and
 there is a similar algorithm to determine 
$\deg (^{e\!}\cF_j)$ for all $e$ \cite[4.3]{ppy}. Let $\boldsymbol{\lb}=(\lb_1,\dots,\lb_t)$ be the partition
of $2l$ corresponding to $e$. (Recall that, for the nilpotent elements of $\spv$, each odd part occurs an even number of times.) 
Then we have $\deg(^{e\!}\cF_j)=i$, if
$
     \lb_1+\ldots +\lb_{i-1}+1 \le \deg(\cF_j) \le \lb_1+\ldots +\lb_{i} .
$

\noindent
By~\cite[4.1]{ke83}, $e$ is Richardson if and only if $\boldsymbol{\lb}$ satisfies the following 
conditions:
\begin{enumerate}
\item either all $\lb_i$ are even, or $r:=\max\{j\mid \lb_j \text{ is odd}\}$  is even \\
(set $r=0$ if all parts are even);
\item $\lb_{2j-1},\lb_{2j}$ have the same parity for $2j\le r$;
\item if $\lb_{2j},\lb_{2j+1}$ are even (for $2j < r$), then  $\lb_{2j}\ge\lb_{2j+1}+2$.
\end{enumerate}

\noindent
For the Richardson elements, the above algorithm for finding $\deg(^{e\!}\cF_j)$
can graphically be presented via the chessboard 
filling of the Young diagram of
$\boldsymbol{\lb}$. See the left figure below, where $\boldsymbol{\lb}=(6,6,5,5,2)$ and 
the parts $\lb_i$ represent the columns of the diagram. For this diagram, one obtains
$\{\deg(^{e\!}\cF_j)\}=\{1^3,2^3,3^2,4^3,5\}$.

\begin{figure}[htb]
\setlength{\unitlength}{0.02in}
\begin{center}
\begin{picture}(60,55)(0,0)

\put(-20,25){$\boldsymbol{\lb}=$}
\put(0,0){\line(1,0){50}}    
\put(40,20){\line(1,0){10}}
\put(20,50){\line(1,0){20}}
\put(0,60){\line(1,0){20}}
\put(0,0){\line(0,1){60}}    
\put(20,50){\line(0,1){10}}
\put(40,20){\line(0,1){30}}
\put(50,0){\line(0,1){20}}

\qbezier[28](0,10),(24.5,10),(49,10)
\qbezier[24](0,20),(19.5,20),(39,20)
\qbezier[24](0,30),(20,30),(40,30)
\qbezier[24](0,40),(20,40),(40,40)
\qbezier[12](0,50),(10,50),(20,50)

\qbezier[30](10,0),(10,30),(10,60)
\qbezier[25](20,0),(20,25),(20,50)
\qbezier[25](30,0),(30,25),(30,50)
\qbezier[12](40,0),(40,10),(40,20)

\put(3,53){$\scriptstyle 1$}
\put(3,33){$\scriptstyle 1$}
\put(3,13){$\scriptstyle 1$}
\put(3,-7){$\scriptstyle \it 1$}

\put(13.5,43){$\scriptstyle 2$}
\put(13.5,23){$\scriptstyle 2$}
\put(13.5,3){$\scriptstyle 2$}
\put(13.5,-7){$\scriptstyle \it 2$}

\put(23.5,33){$\scriptstyle 3$}
\put(23.5,13){$\scriptstyle 3$}
\put(23.5,-7){$\scriptstyle \it 3$}

\put(33.5,43){$\scriptstyle 4$}
\put(33.5,23){$\scriptstyle 4$}
\put(33.5,3){$\scriptstyle 4$}
\put(33.5,-7){$\scriptstyle \it 4$}

\put(43.5,13){$\scriptstyle 5$}
\put(43.5,-7){$\scriptstyle \it 5$}

\end{picture}
\qquad
\begin{picture}(80,65)(-25,0)

\put(-20,25){$\boldsymbol{\lb'}=$}
\put(0,0){\line(1,0){50}}    
\put(40,20){\line(1,0){10}}
\put(10,50){\line(1,0){30}}
\put(0,70){\line(1,0){10}}
\put(0,0){\line(0,1){70}}    
\put(10,50){\line(0,1){20}}
\put(40,20){\line(0,1){30}}
\put(50,0){\line(0,1){20}}
  \put(15,55){\line(0,1){10}}
  \put(15,65){\vector(-1,0){10}}

\qbezier[28](0,10),(24.5,10),(49,10)   
\qbezier[24](0,20),(19.5,20),(39,20)
\qbezier[24](0,30),(20,30),(40,30)
\qbezier[24](0,40),(20,40),(40,40)
\qbezier[6](0,50),(5,50),(10,50)
\qbezier[12](0,60),(10,60),(20,60)

\qbezier[15](55,00),(75,00),(95,00)  \put(65,8){\small $\mathfrak{gl}_5$}
\qbezier[15](55,20),(75,20),(95,20)  \put(65,28){\small $\mathfrak{gl}_4$}
\qbezier[15](55,40),(75,40),(95,40)  \put(65,44){\small $\mathfrak{sp}_4$}
\qbezier[15](55,50),(75,50),(95,50)  \put(65,58){\small $\mathfrak{gl}_1$}
\qbezier[15](55,70),(75,70),(95,70)

\qbezier[27](10,0),(10,25),(10,50)   
\qbezier[30](20,0),(20,30),(20,60)
\qbezier[24](30,0),(30,25),(30,50)
\qbezier[12](40,0),(40,10),(40,20)

\put(3,53){$\scriptstyle 1$}
\put(3,33){$\scriptstyle 1$}
\put(3,13){$\scriptstyle 1$}
\put(3,-7){$\scriptstyle \it 1$}

\put(13.5,43){$\scriptstyle 2$}
\put(13.5,23){$\scriptstyle 2$}
\put(13.5,3){$\scriptstyle 2$}
\put(13.5,-7){$\scriptstyle \it 2$}

\put(23.5,33){$\scriptstyle 3$}
\put(23.5,13){$\scriptstyle 3$}
\put(23.5,-7){$\scriptstyle \it 3$}

\put(33.5,43){$\scriptstyle 4$}
\put(33.5,23){$\scriptstyle 4$}
\put(33.5,3){$\scriptstyle 4$}
\put(33.5,-7){$\scriptstyle \it 4$}

\put(43.5,13){$\scriptstyle 5$}
\put(43.5,-7){$\scriptstyle \it 5$}

\end{picture}
\end{center}
\end{figure}

\noindent
To describe a Levi subalgebra $\el$ corresponding to such a $\boldsymbol{\lb}$, one proceeds as
follows. Take all even pairs $\lb_{2j-1},\lb_{2j}$ ($2j\le r$) and replace them with 
$\lb_{2j-1}+1,\lb_{2j}-1$. Because of the conditions, one obtains a partition $\boldsymbol{\lb'}$ of
the form 
$\boldsymbol{\lb'}=(\underbrace{\lb'_1,\dots,\lb'_r}_{odd},\underbrace{\lb_{r+1},\dots,\lb_t}_{even})$. Note that $\lb'_r=\lb_r$. The dual partition 
$\widehat{\boldsymbol{\lb'}}=\{\hat\lb_1,\dots,\hat\lb_s\}$
determines one of the Levi subalgebras corresponding to $e$. Namely, 
$\#\{j \mid \hat\lb_j=i\}$ is even unless $i=r$, and
each pair of parts of equal size $\hat\lb_j$ gives rise to the summand $\mathfrak{gl}_{\hat\lb_j}$ in $\el$. The only non-paired part of size $r$ gives rise to the summand
$\mathfrak{sp}_r$ in $\el$.
We may think of parts of $\widehat{\boldsymbol{\lb'}}$ as the rows of ${\boldsymbol{\lb'}}$.
Then the consecutive pairs of equal rows  below or above  the level $\lb_r$ represent
the summands of the form $\mathfrak{gl}_{\hat\lb_j}$, and our graphical algorithm shows that 
the corresponding pair of rows contain boxes filled with numbers $1,2,\dots,\hat\lb_j$; while
the remaining row of length $r$ at level $\lb_r$ contains numbers $2,4,\dots,r$. 
It is important that the passage from $\boldsymbol{\lb}$ to $\boldsymbol{\lb'}$
consists in moving only empty boxes! 
(See the right figure above, where $r=4$ and $\el$ is equal to $\mathfrak{gl}_5\oplus
\mathfrak{gl}_4\oplus\mathfrak{gl}_1\oplus\mathfrak{sp}_4$.) This shows that the assertion holds 
for this specific Levi subalgebra associated with $e$. By \cite{ke83}, all other Levi subalgebras
(if any) are obtained by the following alterations: If $\el$ contains the summands 
$\mathfrak{sp}_r\oplus\mathfrak{gl}_{r+2}$, then they can be replaced with
$\mathfrak{sp}_{r+2}\oplus\mathfrak{gl}_{r+1}$ (all other summands remain intact).
Clearly, this step does not change the degrees of basic invariants in $\eus S(\el)^L$.

3) If $\g=\mathfrak{so}_{2l+1}$, then $\{\deg \cF_j\}=\{2,4,\dots,2l\}$.
For the admissible Richardson elements $e$, the algorithm for computing $\deg (^{e\!}\cF_j)$ is the same as in part 2), see~\cite[4.4]{ppy}. 
If $\boldsymbol{\lb}=(\lb_1,\dots,\lb_t)$ is admissible, i.e.,
$\lb_1$ is odd and all other parts are even, then  $\lb_2=\lb_3, \lb_4=\lb_5$, etc., and we obtain
\[
\#\{j \mid \deg (^{e\!}\cF_j)=i\}=\left\{\begin{array}{cl} [\lb_1/2], &  i=1 \\
\lb_i/2, &  i> 1 . \end{array}\right.
\]
Then $\boldsymbol{\widehat\lb}=(\hat\lb_1,\dots,\hat\lb_{2s+2k+1})$, where
$\lb_1=2s+2k+1$, $\lb_2=2s$, 
$\hat\lb_{2i-1}=\hat\lb_{2i}$ for $i=1,\dots,s$ and $\hat\lb_{2s+1}=\dots =\hat\lb_{2k+2s+1}=1$.
In this case, $\el=\mathfrak{gl}_{\hat\lb_2}\oplus\mathfrak{gl}_{\hat\lb_4}\oplus\dots\oplus \mathfrak{gl}_{\hat\lb_{2s}}\oplus (\mathfrak{gl}_1)^k$.
Therefore, the basic invariants of degree $i$ in $\eus S(\el)^L$ occur with multiplicity \ 
\[
    \left\{\begin{array}{rl}   s+k, & i=1 \\
    \#\{j \mid \hat\lb_{2j}\ge i\}, & i >1 .  \end{array} \right.
\]
It remains to observe that $s+k=[\lb_1/2]$ and, for $i>1$, we have
$\#\{j \mid \hat\lb_{2j}\ge i\}=\frac{1}{2}\#\{j \mid \hat\lb_{j}\ge i\}=
\frac{1}{2}\lb_i$.
\end{proof}

By Theorems~\ref{thm:AC} and \ref{thm:B}, the sum of $\deg_{\n_-}(\cF_i^\bullet)$ equals 
$\dim\n$. However, Proposition~\ref{degrees-ppy}
provides another approach to this equality. 

\begin{cor}              \label{cor:sum-degrees}
For the bi-homogeneous basic invariants $\cF_1^\bullet,\dots, \cF_l^\bullet$,  we have
$\sum_i\deg_{\n_-}(\cF_i^\bullet)=\dim\n$ and 
$\sum_i\deg_{\p}(\cF^\bullet_i)=\dim \be(\el)$, where $\be(\el)$ is
a Borel subalgebra of\/ $\el$.
\end{cor}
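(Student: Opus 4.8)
The plan is to deduce both equalities from Proposition~\ref{degrees-ppy} together with the results of Theorems~\ref{thm:AC} and~\ref{thm:B}. For the first equality, $\sum_i\deg_{\n_-}(\cF_i^\bullet)=\dim\n$, there is in fact nothing new to prove: this is exactly the last assertion of Theorems~\ref{thm:AC} and~\ref{thm:B}, valid in precisely the cases ($\g=\gt{sl}_{l+1}$ or $\gt{sp}_{2l}$ with $\p$ arbitrary, $\g=\gt{so}_{2l+1}$ with $\p$ admissible) that are also the hypotheses under which Proposition~\ref{degrees-ppy} was stated. So I would simply recall it. The substance of the corollary is the second equality.

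For $\sum_i\deg_\p(\cF_i^\bullet)=\dim\be(\el)$, I would argue as follows. By Proposition~\ref{coincidence}, $\deg_\p(\cF_i^\bullet)=\deg(^{e\!}\cF_i)$ for each $i$, so $\sum_i\deg_\p(\cF_i^\bullet)=\sum_i\deg(^{e\!}\cF_i)$. By Proposition~\ref{degrees-ppy}, the multiset $\{\deg(^{e\!}\cF_i)\}_{i=1}^l$ coincides with the multiset of degrees of the basic invariants in $\eus S(\el)^L$. Hence it remains to observe the purely reductive fact that, for any reductive Lie algebra $\el$, the sum of the degrees of the basic invariants in $\eus S(\el)^L$ equals $\dim\be(\el)$, where $\be(\el)$ is a Borel subalgebra of $\el$. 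This is classical: writing $\el=\z(\el)\oplus[\el,\el]$ and letting $d_1,\dots,d_r$ be the degrees of the basic invariants of the semisimple part (so the remaining $\dim\z(\el)$ basic invariants of $\el$ are linear, of degree $1$), one has $\sum_j d_j=r+|\Delta^+|=\dim\h+\dim\n_{\be}$ for the semisimple part, where $r$ is its rank; adding the contribution $\dim\z(\el)$ from the central linear invariants gives $\dim\z(\el)+\dim\h+\dim\n_\be=\dim\be(\el)$. (Equivalently, the exponents $m_j=d_j-1$ of the semisimple part satisfy $\sum_j m_j=|\Delta^+|$, the number of positive roots.) Combining the three displayed equalities yields the claim.

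The only mild subtlety — and the one place I would be careful — is bookkeeping between $\gt{sl}_{l+1}$ and $\gt{gl}_{l+1}$: in the proof of Proposition~\ref{degrees-ppy} one passes to $\gt{gl}_{l+1}$, where the Levi has the shape $\gt{gl}_{\hat\lb_1}\oplus\dots\oplus\gt{gl}_{\hat\lb_s}$ and picks up an extra linear invariant. One should check that the ``$\dim\be(\el)$'' computed on the $\gt{gl}$-side matches the $\gt{sl}$-normalisation used elsewhere; since the discrepancy between $\gt{sl}_{l+1}$ and $\gt{gl}_{l+1}$ is a one-dimensional centre contributing one degree-$1$ invariant on each side and one dimension to $\be(\el)$ on each side, it cancels, and the formula is insensitive to this choice. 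The same remark handles the $\gt{sp}$ and $\gt{so}$ cases, where the Levi factors $\gt{gl}_{n_i}$, $\gt{sp}_r$ appearing in Proposition~\ref{degrees-ppy} are honest reductive algebras and the degree-sum formula applies summand by summand (the sum of degrees is additive over a direct-sum decomposition of $\el$, and so is $\dim\be(\el)$). I do not expect any genuine obstacle here; the corollary is essentially an immediate consequence of the two cited propositions plus the classical identity $\sum(d_j-1)=|\Delta^+|$.
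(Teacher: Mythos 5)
Your proof is correct, and for the second equality it coincides with the paper's: both reduce $\sum_i\deg_\p(\cF_i^\bullet)=\sum_i\deg(^{e\!}\cF_i)$ via Proposition~\ref{coincidence}, invoke Proposition~\ref{degrees-ppy} to match the multiset of degrees with that of the basic invariants of $\eus S(\el)^L$, and finish with the classical identity $\sum_j d_j=\dim\be(\el)$ (your care about the $\gt{gl}$ versus $\gt{sl}$ normalisation and the additivity over Levi summands is sound, though the paper leaves it implicit). The one place you diverge is the first equality: you simply recall it from Theorems~\ref{thm:AC} and~\ref{thm:B}, which is legitimate, but the paper instead \emph{derives} it from the second equality via the bookkeeping $\deg(\cF_i)=\deg(\cF_i^\bullet)$, $\sum_i\deg(\cF_i)=\dim\be$, and $\dim\be=\dim\be(\el)+\dim\n_-$, so that $\sum_i\deg_{\n_-}(\cF_i^\bullet)=\dim\be-\dim\be(\el)=\dim\n$. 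This matters because the sentence immediately preceding the corollary announces that Proposition~\ref{degrees-ppy} ``provides another approach to this equality'': the intended content of the corollary is precisely an independent proof of $\sum_i\deg_{\n_-}(\cF_i^\bullet)=\dim\n$ that bypasses the degree count coming from Theorem~\ref{t-degrees}, whereas your version re-imports the theorems' conclusion and so loses that independence. Nothing is wrong mathematically, but if you want the corollary to carry the information the authors intended, replace the citation by the two-line subtraction argument above.
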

\begin{proof}
Since $\deg_\p(\cF^\bullet_i)=\deg (^{e\!}\cF_i)$, the second equality follows immediately from the
proposition. The rest follows from the equalities 

$\deg(\cF_i)=\deg(\cF^\bullet_i)$, \ 
$\sum_{i=1}^l \deg (\cF_i)=\dim\be$, \ 
 \ 
and $\dim\be=\dim\be(\el)+\dim\n_-$.
\end{proof}

\begin{ex}
{\sl 1)}  $\boldsymbol{\lb}=(6,4,2)$ determines a Richardson element in $\mathfrak{sp}_{12}$.
Here $\{\deg(\cF_i)\}=\{2,4,6,8,10,12\}$ and the algorithm transforms these degrees in 
$\{\deg(^{e\!}\cF_i)\}=\{1,1,1,2,2,3\}$. This is in accordance with the fact that
the corresponding Levi subalgebra is $\mathfrak{gl}_3\oplus\mathfrak{gl}_2\oplus\mathfrak{gl}_1$. 
Thus,  the bi-degrees $(\deg_{\p}\cF_i^\bullet, \deg_{\n_-}\cF_i^\bullet)$
of $\{\cF_i^\bullet\}$ are:
\[ 
    (1,1), (1,3), (1,5), (2,6), (2,8), (3,9) .
\]

{\sl 2)} $\boldsymbol{\lb}=(3,3,1,1)$ determines a Richardson element in $\mathfrak{sp}_{8}$.
Here $\{\deg(\cF_i)\}=\{2,4,6,8\}$ and the algorithm transforms these degrees in 
$\{\deg(^{e\!}\cF_i)\}=\{1,2,2,4\}$.
Accordingly, the corresponding Levi subalgebra is $\mathfrak{sp}_4\oplus\mathfrak{gl}_2$.
Thus,  the bi-degrees $(\deg_{\p}\cF_i^\bullet, \deg_{\n_-}\cF_i^\bullet)$
of $\{\cF_i^\bullet\}$ are: $ (1,1), (2,2), (2,4), (4,4) .$

{\sl 3)} $\boldsymbol{\lb}=(5,4,4,2,2)$ determines an admissible  Richardson element in $\mathfrak{so}_{17}$.
Here $\{\deg(\cF_i)\}=\{2,4,6,8,10,12,14,16\}$ and the algorithm transforms these numbers in 
$\{\deg(^{e\!}\cF_i)\}=\{1^2,2^2,3^2,4,5\}$. This corresponds to the fact that $\el=
\mathfrak{gl}_5\oplus\mathfrak{gl}_3$.
\end{ex}

\begin{rmk}     \label{rmk:reason}
The reason for our partial success is that there is a general relationship between $\cH^\bullet$ and
${^e\!}\cH$ (Prop.~\ref{coincidence}) and  the polynomials
$^{e\!}\cF_1,\ldots,\,^{e\!}\cF_l$  are algebraically independent for  all (resp. admissible)
Richardson elements $e$
in $\gt{sl}_{l+1}$ and $\gt{sp}_{2l}$ (resp. $\gt{so}_{2l+1}$).
However, for $\g=\mathfrak{so}_{2l}$, there are Richardson elements $e$ such that 
$^{e\!}\cF_1,\ldots,\,^{e\!}\cF_l$  are algebraically dependent for any choice of basic invariants $\cF_i$. Namely, this happens for  $e\in\mathfrak{so}_{12}$ corresponding to the partition 
$(5,3,2,2)$, see \cite[Example~4.1]{ppy}. (Here $\dim\g_e=18$ and the semisimple part of $\el$ is of type $\GR{A}{3}$.)
For the corresponding parabolic contraction $\q$, 
$\cF_1^\bullet,\dots, \cF_l^\bullet$ are also algebraically dependent, 
see~\eqref{eq:alg-indep-equiv}. One can prove that $\eus S(\q)^Q$ always has the transcendence 
degree $l$, hence here $\eus S(\q)^Q$ is not generated by 
$\cF_1^\bullet,\cdots,\cF_l^\bullet$. 
However, this does \un{not} necessarily mean that here $\eus S(\g_e)^{\g_e}$ or 
$\eus S(\q)^Q$ cannot be a polynomial algebra.
\end{rmk}

\section{Minimal parabolic subalgebras and subregular contractions}
\label{sect5}

\noindent
In this section $\g$ is a simple Lie algebra.
Fix a triangular decomposition $\g=\ut_-\oplus\te\oplus\ut$, where $\te$ is a Cartan subalgebra and
$\be=\te\oplus\ut$. Then $\Delta$ is the root system of $(\g,\te)$, $\Delta^+$ is the set of roots of 
$\ut$, $\Pi$ is the set of simple roots in $\Delta^+$, and $\delta$ is the highest root in $\Delta^+$.
Write $\g^\gamma$ for the root space corresponding to $\gamma\in \Delta$.

Let $\p$ be a minimal parabolic subalgebra of $\g$, i.e., $\dim\p=\dim\be+1$ and
$[\el,\el]\simeq \tri$.
We assume that $\p=\be\oplus\g^{-\ap}$ for some $\ap\in\Pi$. Then $\n\oplus\g^{\ap}=\ut$.
If $e\in\n$ is Richardson, then $\dim\g_e=\dim\g-2\dim\n=l+2$
and $G{\cdot}e$ is the subregular nilpotent orbit. 
The parabolic contraction associated with $\p$ is said to be {\it subregular}, too. 
From now on, $\q$ is a subregular contraction of $\g$.
To exclude the case in which $\p=\g$, we assume below that $l\ge 2$.

Recall that the multiset $\{\deg(\cF_1),\dots,\deg (\cF_l)\}$
does not depend on a particular choice of basic invariants in $\eus S(\g)^G$, and if $\g$ is
simple, then there is a unique basic invariant
of maximal degree. (This maximal degree equals the Coxeter number of $\g$.) 
We  assume below that $\cF_l$ has the maximal degree, so that 
$\deg(\cF_i) < \deg(\cF_l)$ for $i< l$. 
The ordering of the previous basic invariants is irrelevant.

\begin{prop}                      \label{l-2}
If\/ $\q$ is a subregular contraction of $\g$, then
\begin{itemize}
\item[\sf (i)] \ $\deg_{\p}(\cF_i^\bullet)=1$ for $i=1,\dots, l-1$ and $\deg_{\p}(\cF_l^\bullet)=2$, 
\item[\sf (ii)] \ the polynomials 
$\cF_1^\bullet,\dots,\cF_l^\bullet$  are algebraically independent
and satisfy the Kostant equality.
\end{itemize}
\end{prop}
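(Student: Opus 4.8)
The plan is to pin down a single exceptional bi-degree and then let Theorem~\ref{t-degrees} finish the job. Throughout write $d_i=\deg\cF_i$ and $k_i=\deg_\p(\cF_i^\bullet)$, so that $\deg_{\n_-}(\cF_i^\bullet)=d_i-k_i$ and, by Proposition~\ref{coincidence}, $k_i=\deg({}^{e\!}\cF_i)$. Recall the standard facts: for simple $\g$ the basic invariant $\cF_l$ of maximal degree $d_l$ has $d_l$ equal to the Coxeter number, $d_l-1=\hot\delta=\sum_i a_i$ where $\delta=\sum_i a_i\ap_i$ is the highest root (all $a_i>0$), and $\sum_i d_i=\dim\be=l+\#\Delta^+$. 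For the minimal parabolic $\p=\be\oplus\g^{-\ap}$ we have $\el=\z(\el)\oplus[\el,\el]$ with $\dim\z(\el)=l-1$ and $[\el,\el]\cong\tri$, hence $\dim\be(\el)=l+1$; also $\dim\n=\#\Delta^+-1$, so that $\sum_i d_i-\dim\n=l+1=\dim\be(\el)$.

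First I would record two elementary bounds. Each $k_i\ge1$: in the decomposition $\cF_i=\sum_j f^{d_i-j}(\cF_i)_j$ of Lemma~\ref{f-ort} — taken with an $\tri$-triple $\{e,h,f\}$, which is permissible by the remarks preceding Proposition~\ref{coincidence} — the degree-$0$ term is $(\cF_i)_0=\cF_i(f)=0$ because $f$ is nilpotent. Conversely, Theorem~\ref{t-degrees} gives $\sum_i(d_i-k_i)=\sum_i\deg_{\n_-}(\cF_i^\bullet)\ge\dim\n$, i.e. $\sum_i k_i\le\sum_i d_i-\dim\n=l+1$. Since each $k_i\ge1$, the multiset $\{k_1,\dots,k_l\}$ is therefore either $\{1^l\}$ or $\{1^{l-1},2\}$: no entry can be $\ge3$, and no two entries can both equal $2$.

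The crux is the claim $k_l=2$, which by the sum bound reduces to $k_l\ge2$, i.e. to the vanishing of the linear term $(\cF_l)_1$. By Lemma~\ref{f-ort}, $(\cF_l)_1=0$ iff ${\textsl d}_f\cF_l$ vanishes on $V=e^\perp$, i.e. iff the $\varkappa$-gradient $\nabla\cF_l(f)$ lies in $V^\perp=\bbk e$; since $\cF_l$ is $G$-invariant we have $\nabla\cF_l(f)\in\g_f$, and as $e\notin\g_f$ (because $[e,f]=h\ne0$) this forces $\nabla\cF_l(f)=0$. To prove $\nabla\cF_l(f)=0$ I would use that $\nabla\cF_l\colon\g\to\g$ is $\Ad$-equivariant and homogeneous of degree $d_l-1$. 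Taking the $\tri$-triple in standard position, so that $h\in\te$ is dominant, and letting the one-parameter subgroup $\gamma(t)$ generated by $h$ act (so that $\Ad\gamma(t)$ multiplies the $\ad h$-eigenspace $\g(k)$ by $t^k$), equivariance together with $\Ad\gamma(t)f=t^{-2}f$ yields $\Ad\gamma(t)\,\nabla\cF_l(f)=t^{-2(d_l-1)}\,\nabla\cF_l(f)$; hence $\nabla\cF_l(f)\in\g(-2(d_l-1))$. But $e$ is subregular and therefore not regular, so its weighted Dynkin diagram is not the all-$2$'s diagram of the regular orbit; thus some label $b_i=\langle\ap_i,h\rangle$ is $<2$, and since all $a_i>0$,
\[
  \langle\delta,h\rangle=\sum_i a_i b_i<2\sum_i a_i=2(d_l-1).
\]
Because $h$ is dominant and $\delta$ is the highest root, every $\ad h$-eigenvalue on $\g$ lies in $[-\langle\delta,h\rangle,\langle\delta,h\rangle]$, so $-2(d_l-1)$ is strictly below every eigenvalue; hence $\g(-2(d_l-1))=0$ and $\nabla\cF_l(f)=0$, as wanted.

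Assembling the pieces: $k_l=2$ excludes the multiset $\{1^l\}$, so $\{k_i\}=\{1^{l-1},2\}$ with the value $2$ occurring at $i=l$ — this is (i). (The argument does not depend on the choice of $\cF_l$ among basic invariants of maximal degree: any two differ by a scalar and by decomposable summands $\cF_j\cF_{j'}$, and since $\cF_j(f)=0$ these affect neither $\cF_l(f)$ nor ${\textsl d}_f\cF_l$.) Moreover $\sum_i k_i=l+1$ now forces $\sum_i\deg_{\n_-}(\cF_i^\bullet)=\sum_i d_i-(l+1)=\dim\n$, so we land in the equality case of Theorem~\ref{t-degrees}: $\cF_1^\bullet,\dots,\cF_l^\bullet$ are algebraically independent and satisfy the Kostant equality in $\q$ — this is (ii). I expect the only real obstacle to be the identity $\nabla\cF_l(f)=0$, and within it the key point that equivariance squeezes $\nabla\cF_l(f)$ into the lowest conceivable $\ad h$-eigenspace $\g(-2(d_l-1))$, which is empty precisely because the subregular $e$ fails to be regular; the remaining steps are bookkeeping with Theorem~\ref{t-degrees}, and one should still check that everything goes through uniformly for all simple $\g$ with $l\ge2$.
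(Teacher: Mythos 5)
Your proof is correct, and it assembles the final conclusion exactly as the paper does: both arguments squeeze the multiset $\{\deg_\p(\cF_i^\bullet)\}$ between the lower bounds $\deg_\p(\cF_i^\bullet)\ge 1$ for all $i$ and $\deg_\p(\cF_l^\bullet)\ge 2$, and the upper bound $\sum_i\deg_{\n_-}(\cF_i^\bullet)\ge\dim\n$ coming from Theorem~\ref{t-degrees}, then read off part (ii) from the equality case of that theorem. The genuine difference lies in how the key inequality $\deg_\p(\cF_l^\bullet)\ge 2$ is obtained. The paper invokes the explicit monomial formula $\cF_l^\blacktriangle=e_\delta\prod_i f_i^{a_i}$ for the highest $\ut_-$-component of $\cF_l$ from \cite[Theorem\,3.9 \& Lemma\,4.1]{alafe1}, and notes that since $\ut_-=\n_-\oplus\g^{-\ap_i}$ with all $a_i>0$, passing from $\ut_-$-degree to $\n_-$-degree costs at least one more unit. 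You instead prove directly that $\nabla\cF_l$ vanishes on the subregular orbit: equivariance plus homogeneity of degree $d_l-1$ force the gradient at $f$ into the $\ad h$-eigenspace $\g(-2(d_l-1))$, and since $2(d_l-1)=2\sum_i a_i$ strictly exceeds $\langle\delta,h\rangle=\sum_i a_ib_i$ whenever some Dynkin label $b_i<2$, that eigenspace is zero precisely because $e$ is not regular. This is a genuinely different and arguably more conceptual route: it bypasses the nontrivial external input from \cite{alafe1} and actually establishes the stronger general fact that $\deg({}^{e\!}\cF_l)\ge 2$ for \emph{every} non-regular nilpotent $e$, whereas the paper's argument is tailored to the minimal-parabolic setting. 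One small bookkeeping point: the coefficient $(\cF_l)_1$ in Lemma~\ref{f-ort} is the gradient of $\cF_l$ at the point of $\g^*$ that is $1$ on the coordinate $f$ and $0$ on $V=e^\perp$, and under the Killing identification that point is $e$, not $f$; so strictly one should conclude $(\cF_l)_1=\nabla\cF_l(e)$ and run your eigenvalue argument at $e$ (landing in $\g(+2(d_l-1))=0$), or else note that $\nabla\cF_l(e)=0$ if and only if $\nabla\cF_l(f)=0$ because $e$ and $f$ are $G$-conjugate and $\nabla\cF_l$ is equivariant. Either repair is one line and does not affect the validity of the argument.
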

\begin{proof} Recall that $\cF_i^\bullet$ is the bi-homogeneous component of $\cF_i$ with
highest $\n_-$-degree.

(i) \ Since $P$ has a dense orbit in $\n$, we have $\eus S(\gt n_-)^P=\bbk$. Therefore the 
$P$-invariant $\cF_i^\bullet$ cannot belong to $\eus S(\gt n_-)\subset \eus S(\q)$ and hence 
$\deg_{\n_-}(\cF_i^\bullet) \le \deg(\cF_i) -1$ for all $i$. 

Consider the bi-homogeneous component of $\cF_l$ with highest $\ut_-$-degree 
(with respect to the decomposition $\g=\be\oplus\ut_-$), denoted by
$\cF_l^\blacktriangle$. It is known that
$\cF_l^\blacktriangle=\displaystyle e_\delta\prod_{i=1}^l f_i^{a_i}$,
where $e_\delta\in\g^\delta$ is a highest root vector, $f_i\in\g^{-\ap_i}$ for $\ap_i\in\Pi$, and 
$\delta=\sum_{i=1}^la_i\ap_i$, see~\cite[Theorem\,3.9 \& Lemma\,4.1]{alafe1}. 
That is, $\cF_l^\blacktriangle$ is a monomial and 
$\deg_{\ut_-}(\cF_l^\blacktriangle) = \deg(\cF_l) -1$. 
Since $\ut_-=\n_-\oplus \g^{-\ap_i}$  for some $i$ and 
all $a_i$ are positive, $\deg_{\n_-}(\cF_l^\blacktriangle) \le \deg(\cF_l) -2$.
This also implies that 
$\deg_{\n_-}(\cF_l^\bullet) \le \deg (\cF_l) -2$.
Therefore, 
\beq       \label{eq:nerav-vo}  \textstyle
   \sum_{i=1}^l\deg_{\n_-}(\cF_i^\bullet) \le \left(\sum_{i=1}^l\deg(\cF_i)\right)-l-1=\dim\n .
\eeq
By Theorem~\ref{t-degrees}, we have  $\sum\deg_{\n_-}(\cF_i^\bullet) \ge\dim\gt n$. Therefore
one actually has the equality, which also means that 
$\deg_{\n_-}(\cF_i^\bullet)= \deg(\cF_i) -1$ for $i\le l-1$ and 
$\deg_{\n_-}(\cF_l^\bullet) = \deg(\cF_l) -2$.

(ii) By Theorem~\ref{t-degrees}, the equality in \eqref{eq:nerav-vo} implies that $\cF_1^\bullet,\dots,\cF_l^\bullet$ are 
algebraically independent and satisfy the Kostant equality.
\end{proof}

In the following lemma, we gather Lie-algebraic properties of the centraliser of a subregular nilpotent
element.

\begin{lm}    \label{lm:subreg-central}
Let $e\in \g$ be a subregular nilpotent element. 
Then 
\begin{itemize}
\item[\sf (i)] \  if\/ $\g$ is not of type $\GR{G}{2}$, then the centre of $\g_e$ is of dimension $l-1$;
if\/ $\g$ is of type $\GR{G}{2}$, then the centre of $\g_e$ is two-dimensional~(see \cite[Theorem\,B]{kurt91});
\item[\sf (ii)] \  if\/ $\g$ is not of type $\GR{G}{2}$, then $\dim [\g_e,\g_e] > 1$.
\item[\sf (iii)] \  if\/ $\g$ is of type $\GR{G}{2}$, then $\g_e$ is the direct sum of\/ $\bbk e$ and 
a three-dimensional Heisenberg Lie algebra $\eus H_3$.
\end{itemize}
\end{lm}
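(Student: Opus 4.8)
The plan is to treat the three assertions by reducing everything to explicit information about subregular nilpotent elements in simple Lie algebras, combining the structure theory of the Slodowy slice with a type-by-type check where necessary. For (i), the key point is the well-known description of the centraliser $\g_e$ of a subregular nilpotent element: by the $\tri$-theory, $\g_e$ carries a grading $\g_e=\bigoplus_{i\ge 0}\g_e(i)$ induced by the semisimple element $h$, with $\g_e(0)=\z_\el(e\cap\el)$ essentially the centraliser of a regular nilpotent inside the $\GR{A}{1}$-Levi factor, and the top piece $\g_e(2m)=\bbk e$ one-dimensional. One knows $\dim\g_e=l+2$, and the reductive part of $\g_e$ is a torus of dimension $l-1$ in all cases \emph{except} $\GR{G}{2}$. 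I would recall (citing \cite{kurt91}, as the statement already does) that the centre $\z(\g_e)$ coincides with this $(l-1)$-dimensional torus part, using the fact that $[\g,e]$ has codimension $l+2$ and that the "long root $\tri$" sitting inside the Levi produces exactly the expected centraliser; the $\GR{G}{2}$ exception is handled separately because there the subregular orbit is the short-root subregular orbit whose centraliser is $4$-dimensional with a $2$-dimensional centre.

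For (ii), the cleanest route is to observe that once $\dim\z(\g_e)=l-1$ and $\dim\g_e=l+2$, the quotient $\g_e/\z(\g_e)$ is three-dimensional, so $[\g_e,\g_e]$ has dimension at most $3$; the claim is that it has dimension strictly greater than $1$. I would argue by contradiction: if $\dim[\g_e,\g_e]\le 1$, then $\g_e$ is "almost abelian", and in fact $[\g_e,\g_e]=\bbk e$ since $e\in[\g_e,\g_e]$ always (because $e\in\g_e(2)$ with $h$ acting, and $h\notin\g_e$ forces $e$ into the derived subalgebra via an $\tri$-bracket... more carefully, one checks $e=[x,y]$ for suitable $x,y\in\g_e$ coming from the slice). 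Then $\g_e$ would be a one-dimensional central extension of an abelian algebra, i.e. a Heisenberg-plus-centre, which by Premet–Panyushev–Yakimova type computations forces $\g$ into a very restricted list; a direct inspection of the small-rank cases and the explicit tables for $\g_e$ in types $\GR{B}{},\GR{C}{},\GR{D}{},\GR{F}{4},\GR{E}{}$ rules this out, leaving only $\GR{G}{2}$. This "direct inspection" is the step I expect to be the main obstacle, since it is not a single conceptual argument but a finite check that must be organised so as to be convincing without listing every case; I would try to streamline it by noting that for $\g$ of type $\GR{A}{l}$ ($l\ge 2$) the subregular centraliser has an obvious $\mathfrak{gl}$-block giving a $2$-dimensional (or larger) derived subalgebra, and for the other classical types the partition $(n-1,1)$ (orthogonal/symplectic analogue) similarly produces a non-abelian $\mathfrak{gl}$-factor, with the exceptional types done from Kurtzke's / de Graaf's tables.

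For (iii), in type $\GR{G}{2}$ one has $\dim\g_e=l+2=4$ and, from the grading, the $h$-eigenvalues on $\g_e$ are known explicitly (the subregular orbit in $\GR{G}{2}$ has weighted Dynkin diagram with labels giving $\g_e$ graded pieces of dimensions, say, $\dim\g_e(0)=2$, $\dim\g_e(2)$ or the relevant shift summing to $4$). The claim is that $\g_e\cong\bbk e\oplus\eus H_3$ where $\eus H_3$ is the Heisenberg algebra. I would exhibit this directly: pick the standard $\tri$-triple $\{e,h,f\}$ for the subregular orbit, compute $\g_e$ as a concrete $4$-dimensional subalgebra of the $7$-dimensional representation (or in the adjoint), identify a central element $z$ spanning $\bbk e$-complement... actually $e$ itself is central in one summand, and the remaining $3$-dimensional ideal has one-dimensional centre and a non-degenerate induced bracket on the quotient, which is precisely $\eus H_3$. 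The bracket relations are a short explicit calculation, and the only subtlety is checking that the decomposition is a \emph{direct sum of ideals} rather than merely of subspaces — this follows because the central $\bbk e$ pairs trivially (under the bracket) with the Heisenberg part, which one reads off from the weight grading since $e$ has the top weight and nothing else in $\g_e$ brackets into $\bbk e$ nontrivially except the Heisenberg centre, forcing the claimed splitting. I expect (i) and (iii) to be essentially bookkeeping once Kurtzke's results are invoked, while (ii) carries the real content and is where I would spend the most care.
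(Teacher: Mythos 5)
Your handling of (i) and (iii) matches the paper: (i) is a citation of Kurtzke in both cases, and (iii) is the same explicit $\GR{G}{2}$ computation (the paper takes $e=e_\beta+e_{3\ap+\beta}$ with $\ap$ the short simple root and exhibits $\eus H_3=\g^{\ap+\beta}\oplus\g^{2\ap+\beta}\oplus\g^{3\ap+2\beta}$). The genuine gap is in (ii), which you yourself flag as carrying the real content. Your route reduces (ii) to a type-by-type inspection of subregular centralisers that you do not carry out, and the reduction leans on the claim that $e\in[\g_e,\g_e]$ ``always'', which is false in general (for a regular nilpotent element $\g_e$ is abelian) and is not justified here; the hedge ``one checks $e=[x,y]$ for suitable $x,y$ coming from the slice'' is not an argument. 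As written, (ii) is incomplete.

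The paper closes this with a short linear-algebra argument that needs no case analysis and no claim about $e$ lying in the derived algebra. First, $\g_e$ is not abelian because $\ind\g_e=l<l+2=\dim\g_e$. Now suppose $\dim[\g_e,\g_e]=1$ and write $\g_e=\z(\g_e)\oplus\ce$ for a complementary subspace $\ce$; by (i), $\dim\ce=3$. Since $\z(\g_e)$ is central, $[\g_e,\g_e]=[\ce,\ce]$, so the bracket restricted to $\ce$ is an alternating bilinear map into a one-dimensional space, i.e.\ in effect a skew-symmetric form on the odd-dimensional space $\ce$. Such a form is degenerate, so some nonzero $c\in\ce$ satisfies $[c,\ce]=0$ and hence $[c,\g_e]=0$, giving $c\in\z(\g_e)\cap\ce=0$ --- a contradiction. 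This is precisely the parity observation your ``Heisenberg-plus-centre'' heuristic was circling around; making it explicit eliminates both the false intermediate step and the unfinished finite check.
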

\begin{proof}
(ii) \ Since
$l=\ind \g_e < \dim\g_e=l+2$, $\g_e$ is not abelian, i.e.,  $[\g_e,\g_e]\ne 0$. Assume that 
$\dim [\g_e,\g_e] =1$. Write $\g_e=\z(\g_e)\oplus \ce$, where $\z(\g_e)$ is the centre, and 
$\ce$ is a three-dimensional complement. Since $[\g_e,\g_e]=[\ce,\ce]$ is one-dimensional, 
the space $\ce$ must contain a non-trivial central element. A contradiction!

(iii) \ Let $\Pi=\{\ap,\beta\}$, where $\ap$ is short. One can take $e=e_\beta+e_{3\ap+\beta}$.
Then $\eus H_3=\g^{\ap+\beta}\oplus\g^{2\ap+\beta}\oplus\g^{3\ap+2\beta}$.
\end{proof}
\begin{prop} \label{min-parab-centr}
Let $P\subset G$ be a minimal parabolic subgroup and $e\in\n$ a subregular nilpotent  element. 
Then $\eus S(\g_e)^{P_e}=\eus S(\g_e)^{G_e}$ is freely generated by  
$^{e\!}\cF_1,\dots, ^{e\!}\cF_l$.
\end{prop}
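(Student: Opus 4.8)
The statement claims two things: first, that the $P_e$- and $G_e$-invariants in $\eus S(\g_e)$ coincide, and second, that $^{e\!}\cF_1,\dots,^{e\!}\cF_l$ freely generate this algebra. The plan is to invoke Theorem~\ref{thm:main3}(iii), which reduces everything to verifying its hypotheses for a subregular $e$. By Proposition~\ref{l-2}(ii), the $\cF_i^\bullet$ are algebraically independent, hence (via \eqref{eq:alg-indep-equiv}) so are the $^{e\!}\cF_i$; this gives the input to Theorem~\ref{thm:main3}. Thus it remains to show (a) $\eus S(\g_e)^{G_e}$ is freely generated by $^{e\!}\cF_1,\dots,^{e\!}\cF_l$, and (b) $\eus S(\g_e)^{P_e}=\eus S(\g_e)^{G_e}$.

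For (a), I would argue separately in the $\GR{G}{2}$ and non-$\GR{G}{2}$ cases, using Lemma~\ref{lm:subreg-central}. When $\g$ is not of type $\GR{G}{2}$: the centre $\z(\g_e)$ has dimension $l-1$, and $\dim[\g_e,\g_e]>1$. Since $\z(\g_e)_{reg}$-type arguments show that $\eus S(\z(\g_e))=\bbk[x_1,\dots,x_{l-1}]$ sits inside $\eus S(\g_e)^{\g_e}$, and the index is $l$, exactly one more algebraically independent invariant is needed. The polynomials $^{e\!}\cF_i$ supply $l$ algebraically independent invariants with known degrees coming from Proposition~\ref{l-2}(i) together with Proposition~\ref{coincidence}: namely $\deg(^{e\!}\cF_i)=\deg_\p(\cF_i^\bullet)$ equals $1$ for $i<l$ and $2$ for $i=l$. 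So $l-1$ of the $^{e\!}\cF_i$ are linear (hence span $\z(\g_e)^*$ inside $\eus S^1(\g_e)$, up to identification), and the last is a quadratic invariant not expressible through the linear ones. A Jacobian/codimension argument (or the standard criterion that a system of algebraically independent homogeneous invariants whose product of degrees matches $[\eus S(\g_e):\eus S(\g_e)^{\g_e}]$-type data generates) then yields that these freely generate $\eus S(\g_e)^{\g_e}$, and since $G_e$ acts and these invariants are already $G_e$-invariant, $\eus S(\g_e)^{G_e}=\eus S(\g_e)^{\g_e}$. In the $\GR{G}{2}$ case, Lemma~\ref{lm:subreg-central}(iii) gives $\g_e=\bbk e\oplus\eus H_3$; here $l=2$, and one checks directly that $\eus S(\g_e)^{\g_e}$ is polynomial on two generators — the coordinate dual to $e$ (degree $1$) and the Casimir-type quadratic element of $\eus S(\eus H_3)$ corresponding to the centre of the Heisenberg algebra (degree $2$) — matching the degrees $1,2$ from Proposition~\ref{l-2}(i), and these are again $G_e$-invariant.

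For (b), I would show $P_e$ and $G_e$ have the same action on $\g_e$ up to something that does not affect invariants, or more directly that $\eus S(\g_e)^{\g_e}$ is already $G_e$-stable and equals $\eus S(\g_e)^{G_e}$ (established in (a)), whence it lies between $\eus S(\g_e)^{G_e}$ and $\eus S(\g_e)^{G_e^o}=\eus S(\g_e)^{\g_e}$ and the chain collapses: $\eus S(\g_e)^{G_e}\subseteq\eus S(\g_e)^{P_e}\subseteq\eus S(\g_e)^{\g_e}=\eus S(\g_e)^{G_e}$. So once (a) is in hand, (b) is immediate. Then Theorem~\ref{thm:main3}(iii) applies and delivers the conclusion, with freeness of $^{e\!}\cF_1,\dots,^{e\!}\cF_l$ as part of the package.

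The main obstacle I anticipate is (a) — specifically, proving that the single quadratic invariant $^{e\!}\cF_l$ genuinely completes the $l-1$ linear invariants to a free generating set, rather than merely an algebraically independent set. The degree bookkeeping from Proposition~\ref{l-2} strongly constrains things, but turning "algebraically independent with the right degrees" into "freely generating" requires either a clean structural description of $\g_e$ (available in type $\GR{G}{2}$ and presumably via the classification of subregular centralisers in the other types — Kurtzke's paper is already cited) or a Kostant-type regularity argument for $\g_e$ itself. I would expect the cleanest route is to use the explicit structure of $\g_e$ for subregular $e$ (a Heisenberg-type extension in most cases) to pin down $\eus S(\g_e)^{\g_e}$ outright, and then match it against the $^{e\!}\cF_i$ by degrees.
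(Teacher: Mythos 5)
Your non-$\GR{G}{2}$ argument is essentially the paper's route (the ``Jacobian/codimension argument'' you leave as a black box is made precise there: Lemma~\ref{lm:subreg-central}(ii) gives $\dim[\g_e,\g_e]>1$, hence the singular set in $\g_e^*$ has codimension $\ge 2$, and then \cite[Theorems~2.1(iii) and 2.2]{ppy} together with $\sum_i\deg({}^{e\!}\cF_i)=l+1=\tfrac12(\dim\g_e+\ind\g_e)$ yield free generation of $\eus S(\g_e)^{G_e^o}=\eus S(\g_e)^{G_e}$). But your treatment of type $\GR{G}{2}$ contains a genuine error that breaks step (b). For $\g_e=\bbk e\oplus\eus H_3$ one has $\eus S(\g_e)^{\g_e}=\bbk[e,z]$, where $z$ spans the centre of $\eus H_3$: both free generators have degree $1$. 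There is no ``Casimir-type quadratic'' generator of $\eus S(\g_e)^{\g_e}$. Since $\deg({}^{e\!}\cF_2)=2$, the polynomials ${}^{e\!}\cF_1,{}^{e\!}\cF_2$ do \emph{not} generate $\eus S(\g_e)^{\g_e}$; they generate the strictly smaller algebra $\eus S(\g_e)^{G_e}=\bbk[e,z^2]$, because the component group $G_e/G_e^o\simeq\Sigma_3$ acts on the line $\bbk z$ by the sign character. So your claim (a), that $\eus S(\g_e)^{\g_e}=\eus S(\g_e)^{G_e}$ with the stated generators, is false in type $\GR{G}{2}$.

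Consequently your deduction of (b) collapses exactly where it is needed: the chain $\eus S(\g_e)^{G_e}\subseteq\eus S(\g_e)^{P_e}\subseteq\eus S(\g_e)^{\g_e}$ does not degenerate to equalities in type $\GR{G}{2}$, and one must check by hand that $P_e$ contains an element of $G_e$ multiplying $z$ by $-1$. This is a real verification, not a formality: for the minimal parabolic attached to the long simple root one has $\#(G_e/P_e)=3$, so $P_e$ is a proper (index-$3$) subgroup of $G_e$ and could a priori miss every order-$2$ element of $\Sigma_3$; the paper checks it does not (see Remark~\ref{rmk:parabolic-G2}). Two smaller points: your closing appeal to Theorem~\ref{thm:main3}(iii) is circular --- the proposition you are proving \emph{is} the hypothesis of that theorem, whose conclusion concerns $\eus S(\q)^Q$, not $\eus S(\g_e)$; and in the non-$\GR{G}{2}$ case the passage from ``$l$ algebraically independent invariants of degrees $1^{l-1},2$'' to ``free generators'' is precisely the content you must supply, via the codimension-$2$ condition above, rather than assume.
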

\begin{proof}
By Propositions~\ref{coincidence} and \ref{l-2}(i), we have 
$\deg(^{e\!}\cF_i)=1$ for $i\le l-1$ and $\deg(^{e\!}\cF_l)=2$. In particular, 
$^{e\!}\cF_1,\dots,^{e\!}\cF_{l-1}$ are just elements of $\g_e$. Moreover,
Proposition~\ref{l-2}(ii) implies that $^{e\!}\cF_1,\dots, ^{e\!}\cF_l$ are algebraically independent.
(This also follows from the fact that $\sum_{i=1}^l\deg(^{e\!}\cF_i)=l+1=\frac{1}{2}(\dim\g_e+\ind\g_e)$,
see \cite[Theorem\,2.1]{ppy}.)

Recall that all $^{e\!}\cF_i$ are $G_e$-invariant and hence
$^{e\!}\cF_1,\dots, ^{e\!}\cF_{l-1}$ are linearly independent central elements of $\g_e$.
Then $\z:=\text{span}\{^{e\!}\cF_1,\dots, ^{e\!}\cF_{l-1}\}$ is a central subalgebra of $\g_e$.

\textbullet \quad Suppose that $\g$ is not of type $\GR{G}{2}$. Then $\z=\z(\g_e)$ is the 
centre of $\g_e$.
Consider the coadjoint representation of $G_e^o$ in $\g_e^*$. Since $\dim\g_e=\ind\g_e+2$, the
$G_e^o$-orbits in $\g_e^*$ are of dimension $2$ and $0$. Since $G_e^o$ is connected, the
union of $0$-dimensional orbits is just the subspace $V$ of $\g_e$-fixed points, i.e.,
$V=\{\xi \in \g_e^*\mid x\star \xi=0 \ \ \forall x\in\g_e\}$.
For a linear form $\xi$,  one readily verifies that  $\xi\in V$ if and only if $\xi$ vanishes on $[\g_e,\g_e]$. 
It then follows from Lemma~\ref{lm:subreg-central}(ii) that $\codim V \ge 2$. In other words, the 
set of singular elements in $\g_e^*$ is of codimension $\ge 2$.
Now, combining Theorems~2.1(iii) and~2.2 in \cite{ppy}, we obtain that 
\[
     \eus S(\g_e)^{G_e}=\eus S(\g_e)^{G_e^o}=\bbk[^{e\!}\cF_1,\dots, ^{e\!}\cF_l] .
\]
Since  $G_e^o \subset P_e\subset G_e$, the assertion about $P_e$-invariants follows.

\textbullet \quad Suppose that $\g$ is of type $\GR{G}{2}$.
Then $\g_e$ is the direct sum of $\bbk e$ 
and a Heisenberg Lie algebra $\eus H_3$. Let $(x,y,z)$ be a basis for $\eus H_3$ such that
$[x,y]=z$ is the only non-trivial bracket. Then 
$\z(\g_e)=\bbk e\oplus \bbk z$ and ${\eus S}(\g_e)^{\g_e}=\bbk[e,z]$. 
Note that since $\deg(\cF_1)=2$, we have $^{e\!}\cF_1=e$.
The component group $G_e/G_e^o$ is the symmetric group $\Sigma_3$ and it acts non-trivially on 
$\z(\g_e)$. This can be verified directly, using the element $e$ indicated in the proof of 
Lemma~\ref{lm:subreg-central}(iii).
Since $e$ is a $G_e$-fixed vector, the line $\bbk z\subset \z(\g_e)$ affords  the 
unique non-trivial one-dimensional representation of $\Sigma_3$. Consequently, 
${\eus S}(\g_e)^{G_e}=\bbk[e,z^2]$,
and because $^{e\!}\cF_1,\,^{e\!}\cF_2$ are algebraically independent and
$\deg(^{e\!}\cF_2)=2$, we must have $^{e\!}\cF_2=z^2+ce^2$ for some $c\in\bbk$.
Hence ${\eus S}(\g_e)^{G_e}=\bbk[\,^{e\!}\cF_1,\,^{e\!}\cF_2]$.
\\
There are two minimal 
parabolic subalgebras in $\g$ of type $\GR{G}{2}$. For both of them, $P_e$ is not connected and 
contains an element multiplying  $z$ by $-1$. This again can be verified via direct elementary calculations. 
(Cf. also Remark~\ref{rmk:parabolic-G2} below). Hence 
${\eus S}(\g_e)^{P_e}=\bbk[e,z^2]$ in both cases,  and we are done.
\end{proof}

\begin{rmk}     \label{rmk:parabolic-G2}
For $\ap_i\in\Pi$, let $P_i$ denote the corresponding minimal parabolic in $G$ 
and let $e$ be a subregular element in $\p_i^{nil}$. It was proved in \cite[Prop.\,4.2]{br93} that 
$(P_i)_e=G_e$ if and only if $\ap_i$ is short
(in the simply-laced case, all roots are assumed to be short). Moreover, using the explicit 
description of the Springer fibre of $e$ as a {\it Dynkin curve} \cite[p.147-148]{steinb}, one can
show that if $\ap_i$ is long, then $\#(G_e/(P_i)_e)=\|\ap_i\|^2/\|\ap_{short}\|^2$.
In the $\GR{G}{2}$-case, with 
$\ap_1=\ap$ and $\ap_2=\beta$, we obtain
$(P_1)_e=G_e$ and $\#(G_e/(P_2)_e)=3$. This means that
$(P_2)_e/(P_2)_e^o$ contains an element of order 2 of $\Sigma_3=G_e/G_e^o$, 
which multiplies $z\in \z(\g_e)$ by $-1$.
\end{rmk}

\begin{rmk}
There are other ways to prove Proposition~\ref{min-parab-centr} if $\g$ is not of type $\GR{G}{2}$. Using 
Lemma~\ref{lm:subreg-central} and information on $\{\deg(^{e\!}\cF_i)\}$, one can prove that
$^{e\!}\cF_1,\dots, ^{e\!}\cF_l$ satisfy the hypotheses of Lemma~\ref{lm:igusa} with
$A=G_e^o$, which implies that the functions $^{e\!}\cF_1,\dots, ^{e\!}\cF_l$ freely generate the 
algebra $\eus S(\g_e)^{G_e^o}$ and hence $\eus S(\g_e)^{G_e^o}=\eus S(\g_e)^{G_e}$.
There is also a way to describe $^{e\!}\cF_l$ almost explicitly. The intersection of $e+\g_f$ with the nullcone in $\g$ is isomorphic to a hypersurface in a $3$-dimensional affine 
space with a unique singular point, a Klenian singularity \cite{slodowy}. 
Modulo the ideal $(^{e\!}\cF_1,\ldots,^{e\!}\cF_{l-1})\vartriangleleft \eus S(\g_e)$,
the polynomial $^{e\!}\cF_l$ is the degree $2$ part of the well-known equation defining 
that hypersurface. This statement can be deduced from \cite[Section 7]{Sasha-slice}. 
\end{rmk}

\begin{thm}    \label{thm:min-parab-free-alg}
Let\/ $\q$ be a subregular contraction of\/ $\g$ 
and $\cF_1,\ldots,\cF_l$ the basic invariants in $\eus S(\g)^G$.
Then $\cF_1^\bullet,\dots, \cF_l^\bullet$ \ freely generate $\eus S(\q)^{Q}$
and satisfy the Kostant equality in $\q$.
\end{thm}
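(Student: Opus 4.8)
The plan is to combine Proposition~\ref{l-2}, Proposition~\ref{min-parab-centr}, and Theorem~\ref{thm:main3}. All the pieces have, in effect, already been assembled; the proof is a short citation chase.

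\begin{proof}
Let $e\in\n$ be a Richardson element with polarisation $\p$; since $\p$ is a minimal parabolic, $e$ is subregular. By Proposition~\ref{l-2}(ii), the polynomials $\cF_1^\bullet,\dots,\cF_l^\bullet$ are algebraically independent and satisfy the Kostant equality in $\q$. In particular, the first equivalent condition of~\eqref{eq:alg-indep-equiv} holds, so $^{e\!}\cF_1,\dots,^{e\!}\cF_l$ are algebraically independent as well. By Proposition~\ref{min-parab-centr}, we have $\eus S(\g_e)^{P_e}=\eus S(\g_e)^{G_e}$ and this algebra is freely generated by $^{e\!}\cF_1,\dots,^{e\!}\cF_l$. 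Hence the hypotheses of Theorem~\ref{thm:main3}(iii) are satisfied, and we conclude that $\eus S(\q)^Q$ is freely generated by $\cF_1^\bullet,\dots,\cF_l^\bullet$. The Kostant equality for these generators has already been recorded above.
\end{proof}

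The only subtlety, and the reason the statement was not folded into Section~\ref{sect4}, is that for subregular $e$ the polynomials $^{e\!}\cF_i$ are \emph{not} covered by the ``very good generating system'' results of \cite{ppy} used in Theorems~\ref{thm:AC} and~\ref{thm:B}; their algebraic independence is obtained instead from the degree count in Proposition~\ref{l-2}(i) (equivalently, $\sum_i\deg(^{e\!}\cF_i)=l+1=\tfrac12(\dim\g_e+\ind\g_e)$, which forces independence by \cite[Theorem\,2.1]{ppy}), and the identification $\eus S(\g_e)^{P_e}=\eus S(\g_e)^{G_e}=\bbk[^{e\!}\cF_1,\dots,^{e\!}\cF_l]$ comes from the Lie-algebraic analysis of $\g_e$ in Lemma~\ref{lm:subreg-central} together with the codimension-of-singular-set criterion of \cite[Theorems\,2.1(iii)\,\&\,2.2]{ppy}. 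With those two inputs in hand, Theorem~\ref{thm:main3} does the rest mechanically, uniformly in the type of $\g$ (the $\GR{G}{2}$ peculiarities having been absorbed into the proof of Proposition~\ref{min-parab-centr}), so there is no genuine obstacle remaining at this stage.
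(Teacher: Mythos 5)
Your proof is correct and follows exactly the paper's route: the paper's own proof is the one-line observation that the theorem ``readily follows from Proposition~\ref{min-parab-centr} and Theorem~\ref{thm:main3}'', with the algebraic independence and the Kostant equality supplied by Proposition~\ref{l-2}(ii) just as you say. Your write-up merely makes the citation chain explicit, so there is nothing to add or correct.
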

\begin{proof}
This readily follows from Proposition~\ref{min-parab-centr} and Theorem~\ref{thm:main3}.
\end{proof}

\end{document}